\newtheorem{theorem}{Theorem}[section]
\newtheorem{lemma}[theorem]{Lemma}
\newtheorem{proposition}[theorem]{Proposition}
\newtheorem{thm}{Theorem}
\theoremstyle{definition}
\newtheorem{definition}[theorem]{Definition}
\newtheorem{example}[theorem]{Example}
\theoremstyle{remark}
\newtheorem{remark}[theorem]{Remark}
\newcommand{\lk}{\operatorname{\ell{\it k}}}
\title[Concordances to prime hyperbolic virtual knots]{Concordances to prime hyperbolic virtual knots}
\author[M. Chrisman]{Micah Chrisman}
\address{Department of Mathematics, The Ohio State University, Columbus, Ohio, 43210}
\email{chrisman.76@osu.edu}
\subjclass[2010]{Primary: 57M25, Secondary: 57M27}
\keywords{virtual knot, concordance, prime knot, satellite knot, hyperbolic knot, almost classical knot, complementary tangle.}
\begin{document}
\begin{abstract} Let $\Sigma_0,\Sigma_1$ be closed oriented surfaces. Two oriented knots $K_0 \subset \Sigma_0 \times [0,1]$ and $K_1 \subset \Sigma_1 \times [0,1]$ are said to be (virtually) concordant if there is a compact oriented $3$-manifold $W$ and a smoothly and properly embedded annulus $A$ in $W \times [0,1]$ such that $\partial W=\Sigma_1 \sqcup -\Sigma_0$ and $\partial A=K_1 \sqcup -K_0$. This notion of concordance, due to Turaev, is equivalent to concordance of virtual knots, due to Kauffman. A prime virtual knot, in the sense of Matveev, is one for which no thickened surface representative $K \subset \Sigma \times [0,1]$ admits a nontrivial decomposition along a separating vertical annulus that intersects $K$ in two points. Here we prove that every knot $K \subset \Sigma \times [0,1]$ is concordant to a prime satellite knot and a prime hyperbolic knot. For homologically trivial knots in $\Sigma \times [0,1]$, we prove this can be done so that the Alexander polynomial is preserved. This generalizes the corresponding results for classical knot concordance, due to Bleiler, Kirby-Lickorish, Livingston, Myers, Nakanishi, and Soma. The new challenge for virtual knots lies in proving primeness. Contrary to the classical case, not every hyperbolic knot in $\Sigma \times [0,1]$ is prime and not every composite knot is a satellite. Our results are obtained using a generalization of tangles in $3$-balls we call complementary tangles. Properties of complementary tangles are studied in detail.

\end{abstract}
\maketitle

\section{Introduction} 

\subsection{Motivation}\label{sec_motivate} Let $I=[0,1]$. Two oriented knots $K_0,K_1$ in the $3$-sphere are said to be \emph{concordant} if there is a smoothly and properly embedded annulus $A \subset S^3 \times I$ such that $K_i \subset S^3 \times \{i\}$ for $i=0,1$ and $\partial A=K_1 \sqcup -K_0$. Here $-K$ is $K$ with the opposite orientation. A knot in $S^3$ is said to be \emph{prime} (or \emph{locally trivial}) if every $2$-sphere that intersects $K$ transversely in two points bounds a $3$-ball that intersects $K$ in an unknotted arc. Kirby-Lickorish  \cite{kirby_lick} showed that every knot is concordant to a prime knot. Livingston  \cite{livingston} proved that every knot is concordant to a prime satellite knot. One may even choose the prime knot so that the Alexander polynomial is preserved (Bleiler \cite{bleiler}, Nakanishi \cite{nakanishi_thesis}). Myers \cite{myers} further showed that every knot in $S^3$ is concordant to a knot that is not only prime but hyperbolic. The same result holds true for links in $S^3$ (see also Soma \cite{soma}).

In \cite{turaev_cobordism}, Turaev introduced a new notion of concordance for knots in thickened surfaces $\Sigma \times I$, where $\Sigma$ is closed and oriented. Two knots $K_0\subset \Sigma_0 \times I$, $K_1 \subset \Sigma_1 \times I$ are said to be \emph{(virtually) concordant} if there is a compact oriented $3$-manifold $W$ and a smoothly and properly embedded annulus $A \subset W \times I$ such that $\partial W=\Sigma_1 \sqcup -\Sigma_0$ and $\partial A=K_1 \sqcup -K_0$. This definition coincides with concordance of virtual knots, as introduced by Kauffman \cite{lou_cob}. In fact, a virtual knot is an equivalence class of knots in thickened surfaces, where equivalence is defined by three relations: ambient isotopy, orientation preserving diffeomorphisms of surfaces, and stabilization/destabilization (see Section \ref{sec_review_virtual}). Recently, Boden-Nagel \cite{boden_nagel} proved that two classical knots in $S^3$ are concordant if and only if they are concordant as virtual knots. Since concordance classes of classical knots embed into concordance classes of virtual knots, it is natural to ask if every virtual concordance class also contains prime, hyperbolic, and satellite representatives. 

As will be seen here, the main issue turns out to be proving primeness for virtual knots. Prime virtual knots were first defined and studied by Matveev \cite{korablev_matveev,matveev_roots}. Knots $K$ in thickened surfaces $\Sigma \times I$ can be decomposed not only along $2$-spheres as in the classical case, but also along separating vertical annuli of the form $\sigma \times I$. Here $\sigma \subset \Sigma$ is a separating simple closed curve and the annulus $\sigma \times I$ intersects $K$ transversely in two points. The vertical annuli decompose both the knot and the ambient space. A virtual knot is prime if none of its representatives in thickened surfaces admits a nontrivial decomposition (see Definition \ref{defn_prime} ahead). Prime classical knots are prime as virtual knots. Every non-classical prime virtual knot is locally trivial in every thickened surface representative. However, not every locally trivial knot in a thickened surface is prime as a virtual knot (see e.g Example \ref{example_hyp_not_prime}). This fact makes it generally more difficult to prove a virtual knot is prime.
 
There has been much recent work on hyperbolic knots in thickened surfaces. Adams et al. \cite{adams}, for example, proved that fully alternating locally trivial knots in thickened surfaces have hyperbolic exterior (see also Champanerkar-Kofman-Purcell \cite{abhijit_ilya_purcell} and Howie-Purcell \cite{howie_purcell}). In fact, using the results of \cite{adams,abhijit_ilya_purcell,howie_purcell}, it is possible to construct hyperbolic knots in thickened surfaces that are not prime as virtual knots (see Example \ref{example_hyp_not_prime} ahead). Thus, hyperbolicity of a knot in a thickened surface cannot be used to prove primeness for virtual knots.

Satellite virtual knots also behave differently than might at first be expected from comparison with the classical case. Given a representative $K \subset \Sigma \times I$ of a virtual knot $\upsilon$ and a regular neighborhood $N$ of $K$, a satellite of $K$ is any knot in the interior of $N$ that is not contained in a $3$-ball in $N$ (Silver-Williams \cite{sil_will_sat}). For classical knots, every connected sum can be considered as a satellite. However, not every connected sum of virtual knots will in general be given by a satellite construction (see Example \ref{ex_connect_not_sat} ahead). Determining when a satellite virtual knot is prime also poses a new challenge.

Furthermore, even though classical knot concordance embeds into virtual knot concordance, the virtual concordance classes themselves exhibit unusual and unexpected behavior. We give two illustrations. First, a knot $K \subset \Sigma \times I$ is said to be \emph{(virtually) topologically slice} if it bounds a locally flat disc in some thickened $3$-manifold $W \times I$, where $W$ is compact and oriented, and $\partial W=\Sigma$. As is well known, a classical knot with unit Alexander polynomial is topologically slice (Freedman-Quinn \cite{freedman_quinn}, Theorem 11.7B).  However, the non-classical virtual knot 5.2080 (from Green's table \cite{green}) has unit Alexander polynomial, but is not topologically slice (see \cite{bcg2}, Tables 1 and 2). The second illustration is the band pass class of a knot in $S^3$. Kauffman \cite{on_knots} proved that concordant knots in $S^3$ have the same band pass class. However, every concordance class of long virtual knots contains band pass inequivalent representatives \cite{band_pass}. With these features of the virtual landscape in mind, we proceed to the statement of our main results.

\subsection{Main Results} \label{sec_summary} We will work here exclusively in the smooth category. Informally, a virtual knot will be called \emph{hyperbolic} if it can be represented by a knot in some thickened closed oriented surface $\Sigma \times I$ having hyperbolic exterior. A more precise definition is given in Section \ref{sec_hyp_defn}. A virtual knot is called \emph{almost classical (AC)} if it can be represented by a homologically trivial knot in some thickened surface. The \emph{virtual genus} of a virtual knot is the minimum genus surface on which it can be represented. The main results in this paper are the following three theorems. 

\begin{thm} \label{thm_A} Every virtual knot is concordant to a prime satellite virtual knot having the same virtual genus.
\end{thm}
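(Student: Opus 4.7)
The plan is to realize the concordant knot as a satellite of a minimal genus representative of $\upsilon$ with a carefully chosen winding-number-one pattern. Let $K\subset\Sigma\times I$ be a representative with $g(\Sigma)$ equal to the virtual genus of $\upsilon$, and let $V=N(K)\cong D^2\times S^1$ be a tubular neighborhood equipped with a chosen framing. In an abstract solid torus $V_0$, I would construct a pattern knot $P$ with three properties: (i) $P$ has algebraic winding number one, so $[P]=[\text{core}]$ in $H_1(V_0)$; (ii) $P$ is concordant to the core in $V_0\times I$; and (iii) $(V_0,P)$ is ``prime as a pattern'' in the sense that every essential 2-sphere or meridional disc in $V_0$ meeting $P$ transversely in at most two points bounds a trivial sub-tangle. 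Natural concrete candidates come from the classical prime-pattern constructions of Livingston \cite{livingston}, Bleiler \cite{bleiler}, and Nakanishi \cite{nakanishi_thesis}, which already combine winding-number-one slicing with tangle primeness. Glue $V_0$ into $V$ along the chosen framing to obtain the satellite $K'=P(K)\subset\Sigma\times I$; since $P$ has nonzero winding number and is knotted in $V_0$, $K'$ is not contained in a 3-ball in $V$, so it is a satellite virtual knot in the sense of Silver--Williams.

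\emph{Concordance and virtual genus.} The concordance of $P$ to the core in $V_0\times I$ pushes forward under the identification $V_0\times I\hookrightarrow\Sigma\times I\times I$ to a properly embedded annulus with boundary $K'\sqcup -K$, establishing that $K'$ is virtually concordant to $\upsilon$. Since $K'\subset\Sigma\times I$, its virtual genus is at most $g(\Sigma)$. For the reverse inequality, I would argue that any vertical destabilizing annulus for $K'$ can be put in general position with $\partial V$; winding number one of $P$ in $V$ then forces the resulting essential surface to descend to a destabilization of $K$ itself, contradicting minimality of $\Sigma$.

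\emph{Primeness} is the main obstacle. Let $K''\subset\Sigma''\times I$ be any thickened-surface representative of $K'$ and let $F\subset\Sigma''\times I$ be either a separating 2-sphere or a separating vertical annulus meeting $K''$ transversely in two points. A minimal-genus/destabilization reduction lets me assume $\Sigma''=\Sigma$ and $K''=K'$. I would then put $F$ in general position with $\partial V$ and analyze the planar pieces $F\cap V$ and $F\setminus V$ through the complementary tangle framework developed earlier in the paper. Property (iii) prevents $F\cap V$ from giving a nontrivial decomposition of $(V,K')$ from the interior side, while the complementary tangle analysis of the exterior $(\Sigma\times I)\setminus V$ (which is disjoint from $K'$) forces the outer pieces to be boundary-parallel into $\partial V$ or inessential; together these collapse $F$ to a trivial decomposing surface. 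The subtlest case is a separating vertical annulus $\sigma\times I$ with $\sigma\subset\Sigma$ essential, because such a $\sigma$ may meet the companion $K$ in arbitrarily many points even though $\sigma\times I$ meets $K'$ in only two. Controlling this interaction between the pattern, the companion, and the decomposing annulus is precisely where the complementary tangle technology of the paper is indispensable.
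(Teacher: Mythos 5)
Your overall strategy is the same as the paper's: take a minimal representative $K\subset\Sigma\times I$, form a satellite $K'$ with a Livingston-type pattern that is concordant to the core of the solid torus, and prove primeness by intersecting a putative decomposing surface with $\partial N(K)$ and pushing it entirely inside the solid torus, where primeness of the pattern finishes the job. The concordance step and the genus bound are fine (the paper simply quotes Silver--Williams, Theorem~\ref{thm_sw}, for genus preservation rather than reproving it), though note that the hypothesis doing the work throughout is that the \emph{wrapping} number of the pattern exceeds one, not merely that the winding number equals one; your property (i) by itself would allow patterns isotopic to the core, for which nothing works.

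There is, however, a genuine gap in the primeness step. You reduce to the minimal representative and then consider only two kinds of decomposing surfaces: splitting $2$-spheres and single separating vertical annuli meeting $K'$ transversely in two points. But Matveev's notion of primeness quantifies over \emph{all} thickened-surface representatives, and when a nontrivial connected-sum decomposition on some stabilized representative is pushed down to the minimal one, the decomposing annulus can interact with the destabilizing annuli so that what survives on the minimal representative is not an annular decomposition at all but a \emph{special decomposition}: a pair of disjoint vertical annuli, each meeting $K'$ transversely in a single point. This is exactly the content of Theorem~\ref{thm_mat_kau_mant} and the phenomenon illustrated in Figure~\ref{fig_special_example}. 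Such annuli are automatically incompressible and non-separating (Lemma~\ref{lemma_special_incompress}), so none of your ``collapse $F$ to a trivial decomposing surface'' analysis applies to them as stated. Ruling out special decompositions of the satellite is a separate, necessary argument---in the paper it is Lemma~\ref{lemma_lemma}~(3), which reruns the intersection analysis for the disjoint union $A_1\sqcup A_2$ of the two annuli against $\partial N(K)$. Without that case you have established local triviality and the absence of incompressible two-point annular decompositions, which by Theorem~\ref{thm_prime} is not yet enough to conclude primeness. (A smaller omission: the classical case $\Sigma=S^2$ must be treated separately, since $S^2\times I$ is reducible; there the statement reduces to Livingston's theorem together with Theorem~\ref{thm_class_virt_prime}.)
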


\begin{thm} \label{thm_B} Every virtual knot is concordant to a prime hyperbolic virtual knot having the same virtual genus.
\end{thm}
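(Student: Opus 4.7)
The plan is to imitate Myers' classical argument \cite{myers} in the thickened surface setting, with complementary tangles playing the role that ordinary tangles in $3$-balls play classically. Starting from a representative $K \subset \Sigma \times I$ realizing the virtual genus, choose a smooth $3$-ball $B \subset \Sigma \times I$ meeting $K$ in a single unknotted arc $\alpha$ (such a ball exists: pick a small regular neighborhood of an arc of $K$ that is boundary-parallel in $\Sigma \times I$). This produces a tangle decomposition $K = \alpha \cup \beta$, where $\beta$ sits in the complementary tangle $(T,\tau)$ with $T = \overline{(\Sigma \times I) \setminus B}$ and $\tau = K \cap T$. Because any modification will be performed inside $B$, the ambient surface does not change and the virtual genus is automatically preserved.

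Next, replace $\alpha$ by an arc $\alpha' \subset B$ with the same endpoints chosen so that $(B,\alpha')$ is a Myers tangle, i.e., the exterior of $\alpha'$ in $B$ is hyperbolic with totally geodesic boundary and contains no essential disc, annulus, or torus. Any two properly embedded unknotted arcs in $B$ with the same endpoints cobound a smooth disc in $B \times I$, so $\alpha$ and $\alpha'$ are concordant rel $\partial B$; concatenating this local concordance with the product concordance on $\beta$ yields a concordance in $(\Sigma \times I) \times I$ from $K$ to $K' := \alpha' \cup \beta$. The exterior $E(K')$ decomposes along the punctured sphere $\partial B \setminus \nu(K')$ into the Myers piece and the exterior of the complementary tangle $(T,\tau)$; provided this decomposing sphere is essential in $E(K')$, a standard gluing argument (the Myers piece is rigid and the complementary piece can be thickened via any necessary hyperbolic Dehn fillings, as is done in Theorem~A) promotes the Myers tangle's hyperbolicity to hyperbolicity of $E(K')$.

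The main obstacle, as stressed in the introduction, is primeness as a \emph{virtual} knot: one must rule out every nontrivial decomposition of $K'$ along a separating vertical annulus $\sigma \times I$ meeting $K'$ transversely in two points, for every thickened surface representative. The strategy is to normalize such an annulus $A = \sigma \times I$ with respect to $\partial B$ using the complementary tangle machinery developed in the paper and then argue by innermost disc/outermost arc. In the easy case, $A \cap B = \emptyset$ and $A$ gives a decomposition of the complementary tangle $(T,\tau)$; the no-essential-disc and boundary-rigidity properties of the Myers tangle then force the decomposition to be trivial. In the remaining case, $A \cap B$ consists of properly embedded discs in $B$ whose boundaries sit on $\sigma$; an innermost such disc, together with the prime Myers tangle, must either be parallel into $\partial B$ or meet $\alpha'$ in an arc that produces an essential disc in $(B,\alpha')$, contradicting the choice of $\alpha'$. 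This bootstrap from local primeness of the Myers tangle to global virtual primeness, carried out through complementary tangles, is the technical heart of the proof and the step that fails for naive adaptations of the classical argument.
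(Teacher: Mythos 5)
Your proposal has a fatal structural error at the very first step: you take the ball $B$ to meet $K$ in a \emph{single} arc. Replacing a single unknotted arc in a $3$-ball by a knotted arc $\alpha'$ is, by definition, forming the connected sum of $K$ with a nontrivial classical knot. The resulting knot contains a nontrivial local knot, so it is composite rather than prime, and its exterior contains an essential annulus (the splitting sphere $\partial B$ minus two meridian discs), so it is not hyperbolic either. This is also why the gluing step cannot work as you describe: the surface $\partial B\setminus\nu(K')$ along which you decompose $E(K')$ is an annulus, and Myers' gluing lemma (Lemma \ref{lemma_myers_2p5} in the paper) explicitly requires that no component of the gluing surface be a disc, sphere, annulus, or torus. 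The paper avoids this by using a \emph{two-string} tangle: it takes regular neighborhoods of two arcs of the knot joined by a tunnel arc $\tilde J$ supplied by Myers' Proposition 6.1, so that $B$ meets the knot in a trivial $2$-string tangle whose complementary tangle has simple Haken exterior, and the gluing surface is a four-holed sphere. Your concordance claim fails for the same reason: a knotted arc in $B^3$ is concordant rel $\partial B$ to the trivial arc only if the associated classical knot is slice, so ``any two properly embedded unknotted arcs cobound a disc'' does not apply to $\alpha'$. The paper instead inserts the Kinoshita--Terasaka tangle, which is prime and atoroidal yet explicitly concordant to the trivial $2$-string tangle (Figure \ref{fig_kt_movie}).

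Two further gaps: first, you never address \emph{special} decompositions (along pairs of vertical annuli each meeting the knot once), which by Matveev's theorem (Theorem \ref{thm_mat_kau_mant}) must be excluded alongside annular decompositions to conclude primeness; the paper's intermediate step of first passing to a satellite knot via Theorem \ref{thm_A} exists precisely to kill these. Second, the phrase about ``thickening the complementary piece via hyperbolic Dehn fillings'' does not correspond to any argument that works here; the correct mechanism is verifying Properties B$'$ and C$'$ for the two exterior pairs and invoking Lemma \ref{lemma_myers_2p5}. The overall instinct --- localize the modification in a ball, use a rigid tangle, and normalize decomposing annuli against $\partial B$ by innermost-circle arguments --- is the right one, but the one-string version of it provably produces a composite, non-hyperbolic knot.
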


\begin{thm} \label{thm_C} Every almost classical (AC) knot is concordant to a prime satellite AC knot and a prime hyperbolic AC knot, both of which preserve the Alexander polynomial and the virtual genus.
\end{thm}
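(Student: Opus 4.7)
The plan is to carry out the constructions of Theorems \ref{thm_A} and \ref{thm_B} locally inside a $3$-ball $B \subset \Sigma \times I$ chosen disjoint from a Seifert surface, and then to refine the inserted complementary tangle so that the replacement additionally preserves homological triviality and the Alexander polynomial. Start from a homologically trivial representative $K \subset \Sigma \times I$ of the given AC knot with $g(\Sigma)$ equal to the virtual genus, and fix a Seifert surface $F$ for $K$ in $\Sigma \times I$; such an $F$ exists because $[K]=0 \in H_1(\Sigma \times I)$. Choose the ball $B$ in which the complementary tangle replacement of Theorem \ref{thm_A} or \ref{thm_B} will be performed to lie in the complement of $F$. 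Since the resulting change of tangles inside $B$ is realized by a genus-zero concordance in $B \times I$, the resulting knot $K'$ continues to bound a Seifert surface $F'$ in $\Sigma \times I$ (namely $F$ outside $B$, completed inside $B$ by a disk or local Seifert piece associated with the replaced tangle). Hence $K'$ is still homologically trivial and so AC in the same surface $\Sigma$, which also pins down the virtual genus.

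The next step is to arrange that the tangle replacement does not alter the Alexander polynomial. In the classical setting, Bleiler and Nakanishi achieve this by composing the Kirby-Lickorish / Myers tangle replacement with a local ``doubling'' — a band sum with a tangle whose Seifert pairing contribution is a hyperbolic block, equivalent to a sequence of doubled delta moves. For AC knots the Alexander polynomial is recovered from any Seifert matrix of a surface in $\Sigma \times I$ in exactly the classical way, so this Seifert-matrix argument goes through verbatim provided the modification is confined to a ball disjoint from $F$. Accordingly, one selects a complementary tangle whose replacement is (i) null-concordant inside $B$, (ii) null-homologous in $B$ rel boundary, and (iii) $S$-equivalent, as a local contribution to the Seifert pairing of $F'$, to the original trivial tangle.

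The main obstacle is the final step: ensuring that a complementary tangle satisfying all three algebraic constraints above also yields a knot $K'$ that is prime, and in fact prime satellite (resp.\ prime hyperbolic) in the sense of Theorems \ref{thm_A} and \ref{thm_B}. Unlike the classical case, primeness here requires ruling out both separating $2$-spheres and separating vertical annuli meeting $K'$ in two points, and the algebraic doubling used to preserve the Alexander polynomial might in principle introduce new such decompositions. The resolution is to apply the theory of complementary tangles developed in the body of the paper: one constructs a \emph{single} complementary tangle in $B$ whose replacement simultaneously implements the Bleiler--Nakanishi-type doubling and the primeness/hyperbolicity (resp.\ satellite) enforcing knotting, and then invokes the structural results about complementary tangles to certify that every potential sphere or vertical annulus decomposition of $K'$ must meet $B$ in a way forbidden by those structural results. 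Verifying that such a tangle exists, and that its insertion simultaneously achieves primeness, the prescribed satellite or hyperbolic structure, homological triviality, and $S$-equivalence of Seifert matrices, is the substantive content of Theorem \ref{thm_C}.
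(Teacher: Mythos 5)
Your proposal shares the paper's skeleton (start from the outputs of Theorems \ref{thm_A} and \ref{thm_B}, show the modification preserves AC-ness, then show it preserves $\Delta$), but there are concrete problems with the setup and, more importantly, the central step is deferred rather than proved.

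First, the setup. The ball $B$ in which the tangle replacement of Theorem \ref{thm_B} takes place necessarily contains two arcs of the knot, and the knot is the boundary of any Seifert surface $F$; so $B$ cannot be chosen disjoint from $F$, and the ``complete $F$ inside $B$ by a disk'' picture does not get off the ground. (The paper instead gets AC-ness for free: a join $(J\ominus_B t)\oplus_{B'}t'$ is homotopic to $J$ in $\Sigma\times I$, hence homologous, hence null-homologous when $J$ is.) Moreover, the satellite step of Theorem \ref{thm_A} is not a tangle replacement in a $3$-ball at all --- it replaces $K$ by Livingston's pattern inside a solid torus $N(K)$ --- so your ``ball disjoint from $F$'' framework does not cover the satellite half of the statement; there the paper proves AC-ness by computing $[K']=f_*([P])=\pm r[K]=0$.

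Second, the gap. The substantive content of Theorem \ref{thm_C} is precisely the verification that the specific modifications already fixed in Theorems \ref{thm_A} and \ref{thm_B} (Livingston's pattern, and the Kinoshita--Terasaka tangle inserted into the complementary tangle) do not change $\Delta$. Your proposal replaces this with the assertion that one can ``select'' a complementary tangle that is simultaneously primeness-enforcing and $S$-equivalent, as a local contribution to the Seifert pairing, to the trivial tangle --- and then explicitly states that verifying the existence of such a tangle is the content of the theorem. That is the missing proof, not a reduction of it. There is also a technical obstruction to importing the Bleiler--Nakanishi $S$-equivalence argument ``verbatim'': for AC knots the two Seifert matrices $V^+$ and $V^-$ are not transposes of one another, so the classical Seifert-matrix calculus does not transfer without further work. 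The paper's actual mechanism is different and concrete: apply the skein relation $\Delta_{L_+}-\Delta_{L_-}=(t-t^{-1})\Delta_{L_0}$ at a single crossing of the pattern (resp.\ of $KT$); the crossing switch returns the companion $K$ (resp.\ returns $K'$), and the smoothing produces a $2$-component link that is shown to be, or to be concordant to, a boundary link in $\Sigma\times I$, whence $\Delta_{L_0}=0$ by the elementary-ideal result (Theorem \ref{lemma_bbc}). Primeness and hyperbolicity then come for free from Theorems \ref{thm_A} and \ref{thm_B}, with no need to re-run the decomposition analysis. Without either that skein/boundary-link argument or an actual construction of your hypothesized tangle, the proof is incomplete.
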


The three theorems above can be interpreted in several different ways. From one point of view, the theorems above add a dimension of knot diagrammatics to the surprising convergence of algebra, geometry, and topology that occurs in the case of classical knots. Alternatively, many combinatorially defined invariants of virtual knots are now known to be concordance invariants \cite{bbc,bcg2,bcg1}. Some examples are the odd writhe, writhe polynomial, and graded genus. Our three main theorems demonstrate that there are interesting geometric properties of virtual knots that cannot be detected by any concordance invariant.

Here we briefly outline the ideas used in the proofs of Theorems \ref{thm_A}, \ref{thm_B} and \ref{thm_C}. As the central obstacle throughout lies in proving primeness, this difficulty is tackled first. Section \ref{sec_compress} presents some elementary theorems about local triviality for knots in $\Sigma \times I$ and primeness for virtual knots. These are then applied to proving Theorems \ref{thm_A}, \ref{thm_B}, and \ref{thm_C} in Sections \ref{sec_sat}, \ref{sec_hyp}, and \ref{sec_alex}, respectively. 

Theorem \ref{thm_A} is proved by generalizing Livingtson's theorem to virtual knots. Theorem \ref{thm_B} is proved using a generalization of tangles in $3$-balls. For classical knots, concordances to prime knots or hyperbolic knots in $S^3$ can be constructed using tangles as building blocks (see for example, Kirby-Lickorish \cite{kirby_lick}, Lickorish \cite{lick}, Myers \cite{myers}, Nakanishi \cite{nakanishi_thesis}, and Soma \cite{soma}). Here we introduce \emph{complementary tangles} in $\Sigma \times I$. A (2-string) complementary tangle is a pair of disjoint arcs in the interior of $\Sigma \times I$, where the interior of an embedded $3$-ball $B$ has been removed from the interior of $\Sigma \times I$. The ends of the arcs of a complementary tangle are in $\partial B$. Knots with the desired properties in $\Sigma \times I$ can be constructed by gluing tangles to complementary tangles. In addition to proving Theorem \ref{thm_B}, Section \ref{sec_hyp} explores the basic properties of complementary tangles.

We restrict to almost classical knots in Theorem \ref{thm_C} as these are exactly the collection of virtual knots for which the Alexander polynomial has its usual definition in terms of Seifert surfaces (Boden et al. \cite{acpaper}). As in the classical case, the AC Alexander polynomial satisfies a skein relation. Using this skein relation, we show how to arrange the geometric constructions in the proofs of Theorems \ref{thm_A} and \ref{thm_B} to produce AC knots having the same Alexander polynomial. The remainder of this section gives background material on virtual knots, virtual knot concordance, and prime virtual knots.

\subsection{Virtual knots} \label{sec_review_virtual} In this subsection, we recall the relationship between virtual knots and knots in thickened surfaces. First, a \emph{virtual knot diagram} is an immersed circle in $\mathbb{R}^2$, with only transversal self-intersections, where each double point is marked as either a usual over/under crossing or as virtual crossing. A virtual crossing is denoted with a small circle centered at the double point. Two virtual knot diagrams $\upsilon_1,\upsilon_2$ are said to be equivalent, denoted $\upsilon_1 \leftrightharpoons \upsilon_2$, if they may be obtained from one another by a finite sequence of extended Reidemeister moves \cite{KaV}. See Figure \ref{fig_rmoves}. A virtual knot is an equivalence class of virtual knot diagrams. Virtual links are defined analogously as multi-component diagrams in the plane with both classical and virtual crossings allowed. A virtual knot that has a diagram with no virtual crossings is called a \emph{classical knot}. The set of classical knots embeds into the set of virtual knots \cite{GPV}. In other words, if two classical knots are equivalent as virtual knots, then they are equivalent as knots in the $3$-sphere.
\newline

\begin{figure}[htb]
\begin{tabular}{c}  \def\svgwidth{5.5in}
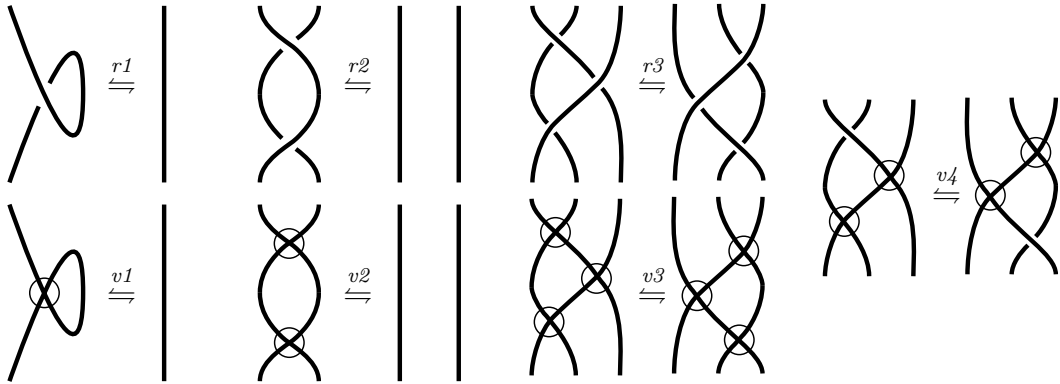 \end{tabular} 
\caption{The extended Reidemeister moves.} \label{fig_rmoves}
\end{figure}

Given a virtual link diagram, a link in a thickened surface is obtained via the \emph{Carter surface algorithm} \cite{carter}. The algorithm constructs the supporting surface via a handlebody decomposition. A $0$-handle is centered at each classical crossing. Each arc between the classical crossings corresponds to the core of a $1$-handle with the blackboard framing. Virtual crossings are ignored, so that at a virtual crossing we have $1$-handles that pass over and under one another.  The union of the $0$- and $1$-handles is a compact surface with boundary. A closed oriented surface $\Sigma$ is formed by attaching $2$-handles to all of its boundary components. An example is given in Figure \ref{fig_carter}. A link diagram of a knot in $\Sigma\times I$ appears as the union of the classical crossings and the cores of the $1$-handles. For a classical knot diagram, the Carter surface is a $2$-sphere. Classical knots in $S^3$ are thus considered as knots in the thickened surface $S^2 \times I$.

\begin{figure}[htb]
\begin{tabular}{ccc} \begin{tabular}{c}\def\svgwidth{1.3in}
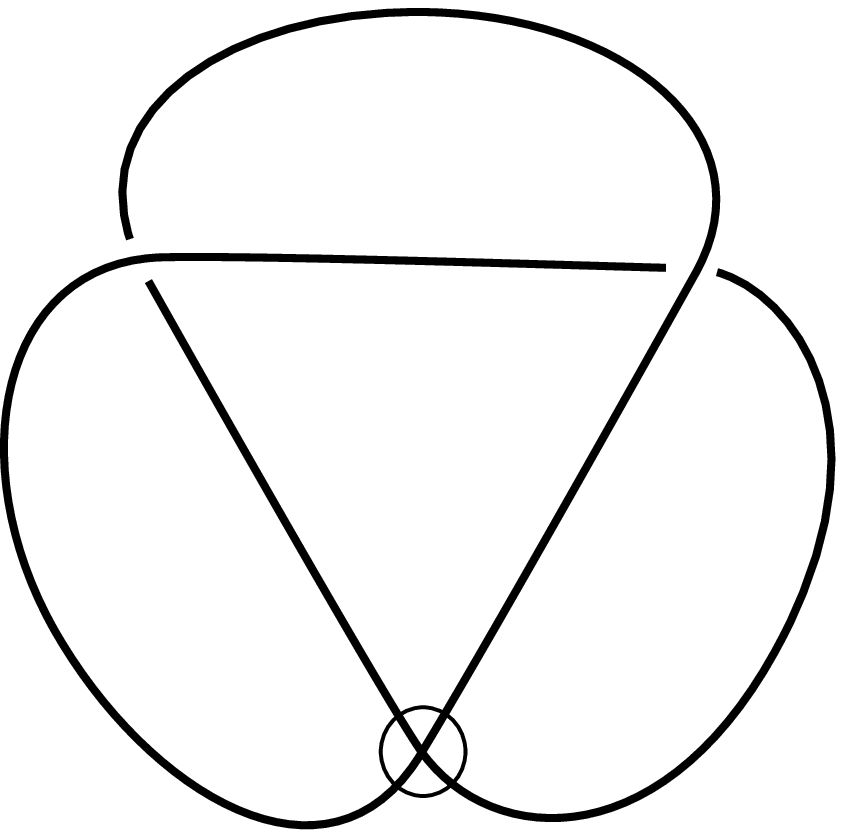 \end{tabular} &\begin{tabular}{c}\def\svgwidth{1.6in}
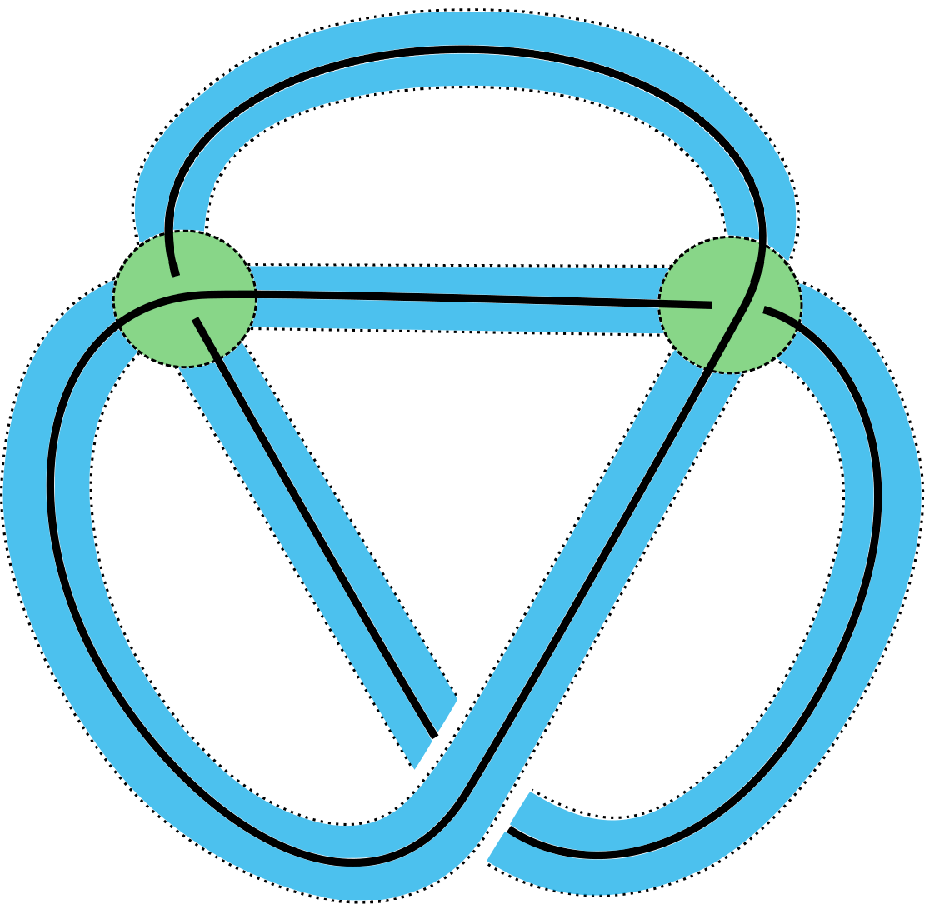 \end{tabular}
&
\begin{tabular}{c}\def\svgwidth{1.8in}
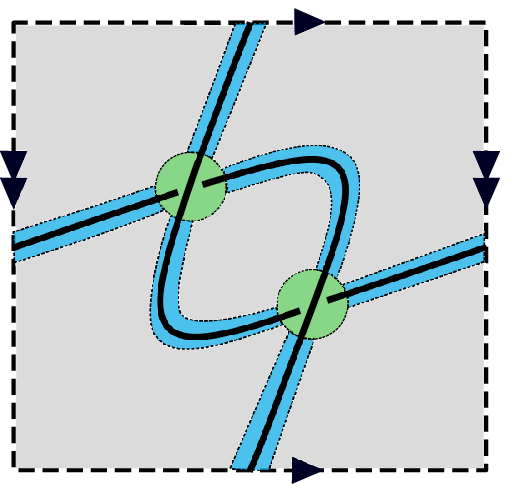 \end{tabular}
\end{tabular}
\caption{The Carter surface algorithm for a virtual trefoil diagram: attach two $0$-handles (in green), four $1$-handles (in blue), and two $2$-handles (in gray).} \label{fig_carter}
\end{figure}
 
Next we define \emph{stabilization} and \emph{destabilization}. These operations allow one to transfer between different thickened surface representations of the same virtual knot. Let $K \subset \Sigma \times I$ be a knot and suppose that $\sigma \subset \Sigma$ is a smoothly embedded closed curve such that $ K \cap (\sigma \times I)=\emptyset$. Cut $\Sigma \times I$ along $\sigma \times I$, cap off with two thickened discs, and then discard if necessary any components not containing $K$. The resulting knot $K' \subset \Sigma' \times I$ is said to be obtained from $K \subset \Sigma \times I$ by a \emph{destabilization}.  The inverse operation is called a \emph{stabilization}. See Figure \ref{fig_destab}. 

The \emph{virtual genus} of a virtual knot is the smallest genus among all closed oriented surfaces $\Sigma$ on which the virtual knot can be represented. If $K \subset \Sigma \times I$ is a representative of a virtual knot and the genus of $\Sigma$ is the virtual genus of $\upsilon$, then $K$ will be called \emph{minimal}. By Kuperberg, any representative may be destabilized to a minimal one.  

\begin{theorem}[Kuperberg \cite{kuperberg}]\label{thm_kuperberg} Given any representative $K\subset \Sigma \times I$ of a virtual knot $\upsilon$, there is a sequence of destabilizations taking $K$ to a minimal representative of $\upsilon$. Moreover, if $K \subset \Sigma \times I$ and $K' \subset \Sigma' \times I$ are two minimal representatives, then there is an orientation preserving diffeomorphism $f:\Sigma \times I \to \Sigma' \times I$ such that $f(\Sigma \times \{0\})=\Sigma' \times \{0\}$, $f(\Sigma \times \{1\})=\Sigma' \times \{1\}$, and $f(K)=K'$. 
\end{theorem}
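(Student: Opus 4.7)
The plan is to prove the two assertions separately, with uniqueness following from the existence argument. Any two representatives of a virtual knot are, by definition, connected by a finite sequence of ambient isotopies, orientation-preserving surface diffeomorphisms, and stabilizations/destabilizations. The first assertion says this sequence can be arranged to be monotone---only destabilizations (and rigid moves)---whenever the terminal representative is minimal. I would start by choosing any minimum-genus representative $K_0 \subset \Sigma_0 \times I$ of $\upsilon$ and fix an arbitrary equivalence sequence from $K \subset \Sigma \times I$ to $K_0$. The task reduces to rearranging this sequence to eliminate every stabilization.

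The engine is a local cancellation lemma: whenever a stabilization along a curve $\alpha$ in some intermediate surface is followed later in the sequence by a destabilization along a curve $\beta$, the pair can be excised at the cost of rearranging the intervening moves. I would prove this by induction on $|\alpha \cap \beta|$. If $\alpha \cap \beta = \emptyset$, the operations commute, and when $\alpha$ is parallel to $\beta$ they cancel outright. Otherwise, an innermost-disk or outermost-arc argument, using that $\alpha$ bounds a disk in the handlebody glued during the stabilization, produces a replacement curve for $\beta$ with strictly fewer intersections that effects the same destabilization. The knot is disjoint from both vertical annuli throughout, so this disjointness is preserved. Iterating strips every stabilization out of the equivalence: because the terminal surface $\Sigma_0$ has minimum genus, each net genus increase in the sequence must eventually be counteracted by a destabilization that the lemma can pair with it.

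For uniqueness, suppose $K \subset \Sigma \times I$ and $K' \subset \Sigma' \times I$ are both minimal. I apply the existence result to the equivalence between them. The resulting monotone sequence contains no nontrivial destabilizations, since both endpoints already realize the minimum genus, so it reduces to rigid moves alone. Composing them yields the desired orientation-preserving diffeomorphism $f:\Sigma \times I \to \Sigma' \times I$ with $f(K)=K'$, and $f$ preserves the product structure $\Sigma \times \{0\} \sqcup \Sigma \times \{1\}$ because every constituent surface diffeomorphism and (de)stabilization does. The main obstacle is the cancellation lemma in the case where $\alpha$ and $\beta$ intersect essentially on the intermediate surface: the intersection reduction must be performed while keeping the knot disjoint from the relevant vertical annuli, and this is the step that genuinely uses incompressibility and compression-body structure in $\Sigma \times I$ rather than purely combinatorial surface topology.
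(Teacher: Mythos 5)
A preliminary point: the paper does not prove this statement. It is quoted, with attribution, from Kuperberg \cite{kuperberg}, so there is no internal proof to compare against. Measured against Kuperberg's published argument, your outline is close in spirit: he proves local confluence of destabilization --- any two destabilizing annuli for the same representative can be isotoped to be disjoint in the knot exterior, after which the corresponding destabilizations either coincide or commute --- and then combines this with termination (genus strictly decreases) via the diamond lemma to obtain a unique fully destabilized form. Your ``flatten every stabilization--destabilization peak'' rearrangement is exactly the Church--Rosser unrolling of that confluence argument, so the global strategy is sound and essentially the standard one.

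The gaps are concentrated in the key lemma. First, ``the pair can be excised'' is false in general: if $\beta$ destabilizes a handle other than the one created along $\alpha$, the composite stabilize-then-destabilize genuinely changes the representative, and the most you can do is rewrite the peak as destabilize-then-stabilize (cancellation occurs only when the two annuli are parallel in the knot exterior). You acknowledge this partially, and your genus bookkeeping at the end is fine, but the lemma should be stated as a commutation statement, proved for \emph{adjacent} peaks (work at a genus-maximal point of the sequence, absorbing isotopies and diffeomorphisms into the neighbouring moves), rather than for a stabilization ``followed later'' by a destabilization with unspecified intervening moves. Second, and more seriously, the intersection-reduction step is where all of the $3$-manifold topology lives, and the mechanism you name --- ``$\alpha$ bounds a disk in the handlebody glued during the stabilization'' --- is not the right tool: a destabilizing annulus over an essential curve is \emph{incompressible} in $\Sigma\times I$ (cf.\ Lemma \ref{lemma_ball}), and nothing relevant compresses. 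What the argument actually requires is irreducibility and boundary-irreducibility of the knot exterior in $\Sigma\times I$, used to remove inessential circles of $A\cap B$ by innermost-disk swaps and arcs by outermost-arc swaps, together with separate treatment of the degenerate cases ($\Sigma= S^2$; the knot contained in a ball; the two annuli parallel; essential circle components of intersection). Until that lemma is proved in full, the proposal is a plan rather than a proof; granting it, both existence and uniqueness follow as you describe, with the final clause about $f(\Sigma\times\{i\})=\Sigma'\times\{i\}$ coming for free from the convention that every allowed move preserves the two boundary surfaces individually.
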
 

\begin{figure}[htb]
\[
\xymatrix{\begin{tabular}{c}\def\svgwidth{4in}
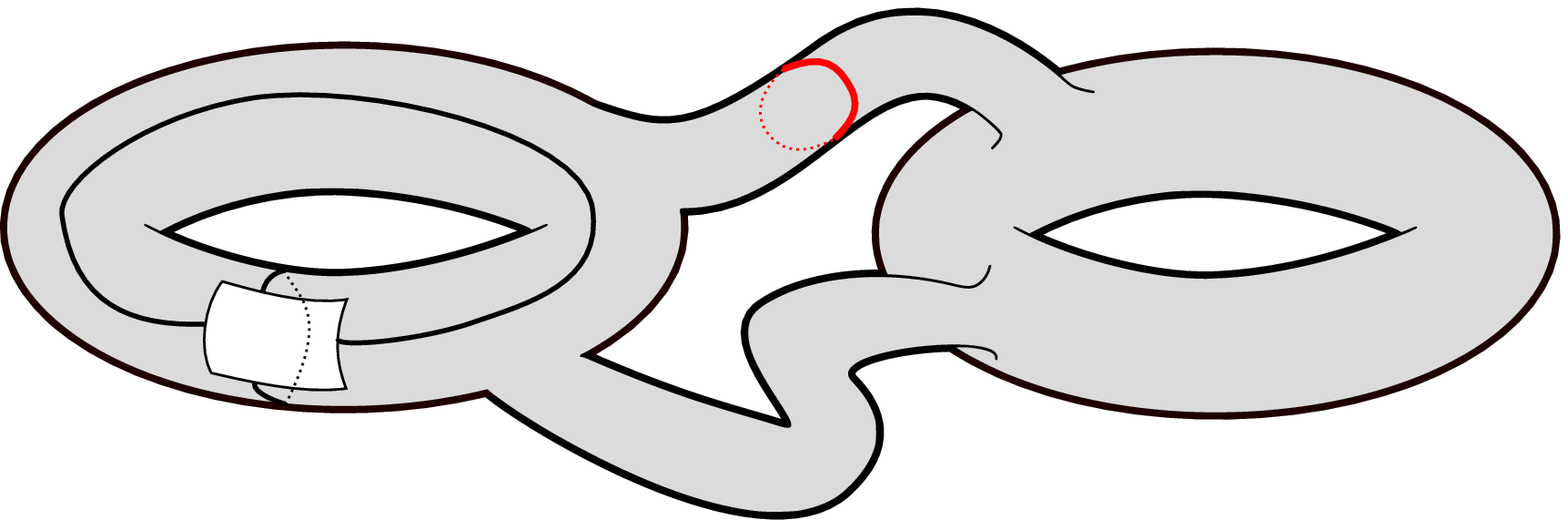 \end{tabular}  \ar@/^3pc/[d]^{\text{destabilize}} \\ \begin{tabular}{c} \def\svgwidth{4in}
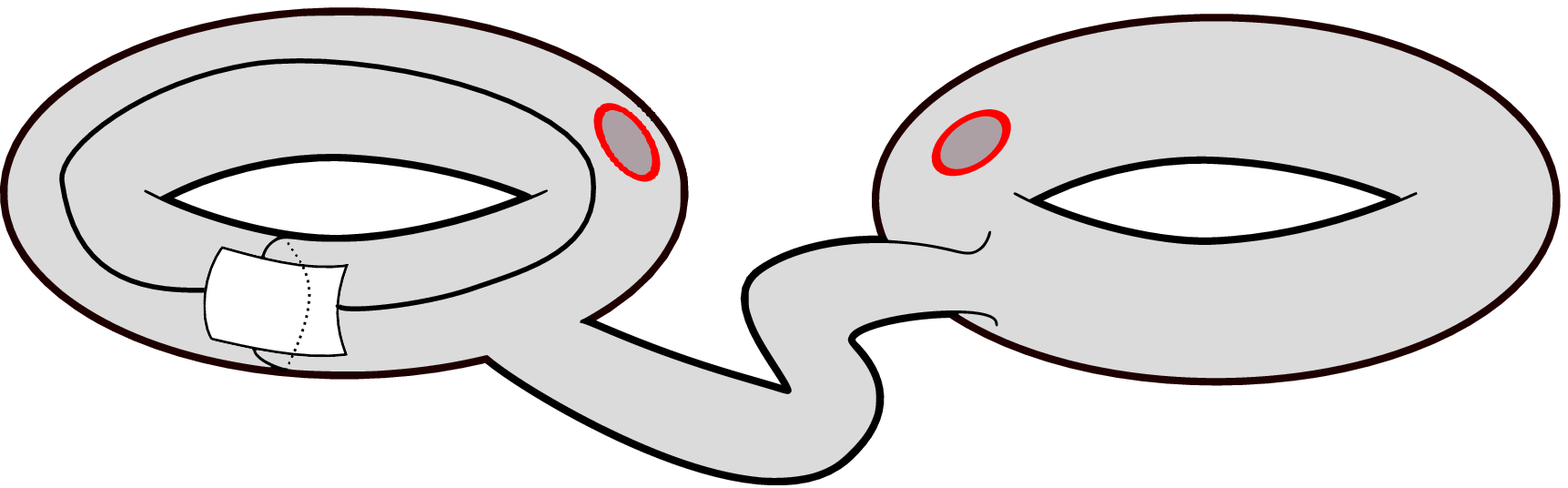\end{tabular} \ar@/^3pc/[u]^{\text{stabilize}}}
\]
\caption{Destabilization and stabilization along a vertical annulus $\sigma \times I$.} \label{fig_destab}
\end{figure}

\subsection{Virtual knot concordance} \label{sec_review_conc} Virtual knot concordance can likewise be defined either diagrammatically or topologically. The topological definition is based on Turaev's definition of concordant knots in thickened surfaces (see Section \ref{sec_motivate}).

\begin{definition}[Virtual Knot Concordance] \label{defn_v_conc} Two virtual knots $\upsilon_0, \upsilon_1$ are said to be \emph{concordant} if they have representatives $K_0 \subset \Sigma_0 \times I$ and $K_1 \subset \Sigma_1 \times I$ that are (virtually) concordant as knots in thickened surfaces. A virtual knot is \emph{(virtually) slice} if it has a representative $K \subset \Sigma \times I$ that is concordant to the unknot in $S^2 \times I$.
\end{definition}

The diagrammatic definition of virtual knot concordance is due to Kauffman \cite{lou_cob}. Figure \ref{fig_concmoves} depicts three moves on virtual link diagrams: births, deaths, and saddles. A birth adds a disjoint unknotted component to the diagram whereas a death deletes one. A saddle move is the oriented surgery on the virtual link diagram depicted on the left in Figure \ref{fig_concmoves}. The new diagram has either one less or one more component. Two virtual knot diagrams $\upsilon_0,\upsilon_1$ are concordant if and only if they are related by a finite connected\footnote{The cobordism surface that the movie represents in some thickened $3$-manifold $W \times I$ can have no closed components. In practice, one shows that the Reeb graph of the movie is connected \cite{band_pass}.} sequence of extended Reidemeister moves, births, deaths, and saddles such that:
\[
\#\text{(births)}-\#\text{(saddles)}+\#\text{(deaths)}=0.
\]
Figure \ref{fig_conc_example} shows a concordance of virtual knots consisting of two saddles and two deaths. The movie advances left-to-right and then top-to-bottom. The equivalence between the topological and diagrammatic definitions of virtual knot concordance follows from Carter-Kamada-Saito \cite{CKS}, Lemma 4.5.  

\small
\begin{figure}[htb]
\begin{tabular}{c}  \def\svgwidth{5in}
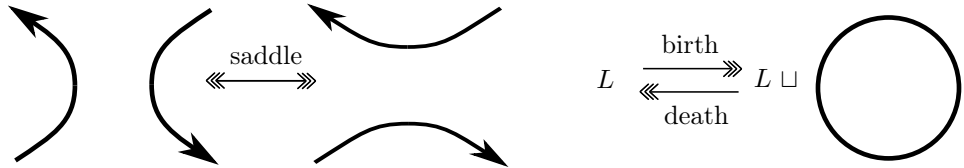 \end{tabular} 
\caption{Birth, death, and saddle moves.} \label{fig_concmoves}
\end{figure}
\normalsize

\begin{figure}[htb]
\begin{tabular}{c} \def\svgwidth{5.5in}
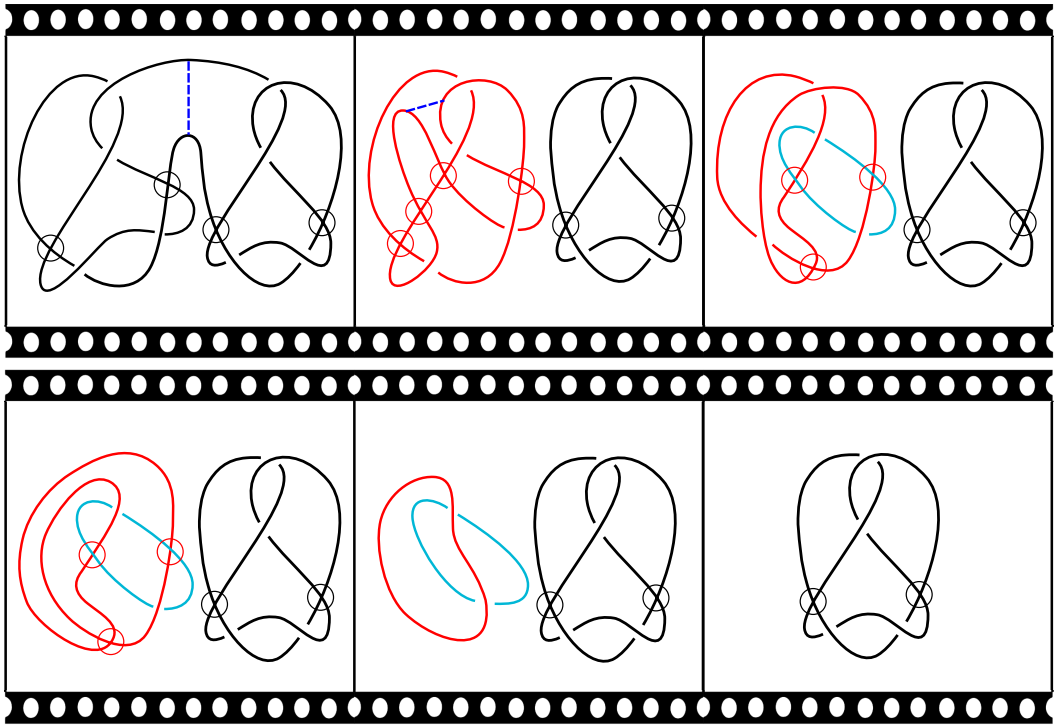 \end{tabular}
\caption{A movie of a concordance of virtual knots. The two saddles are indicated with a dotted blue line. The blue and red component are killed at the end.} \label{fig_conc_example}
\end{figure}

The same definitions carry over to virtual links. Two oriented $m$-component links $L_0 \subset \Sigma_0 \times I, L_1 \subset \Sigma_1 \times I$ are (virtually) concordant if there is a compact oriented $3$-manifold $W$ and $m$ annuli $\bigsqcup_{i=1}^m S^1 \times I$ disjointly and properly embedded in $W \times I$ such that $\partial W=\Sigma_1 \sqcup -\Sigma_0$, $\bigsqcup_{i=1}^m S^1 \times \{1\}$ is mapped to $L_1$, and $\bigsqcup_{i=1}^m S^1 \times \{0\}$ is mapped to $-L_0$. Virtual links are concordant if they have representatives that are concordant as links in thickened surfaces. Diagrammatically speaking, two $m$-component virtual link diagrams are concordant if and only if they can be obtained from one another by a finite sequence of extended Reidemeister moves, births, deaths, and saddles such that restricting to each component of the virtual link gives a concordance of virtual knots. 

\subsection{Prime virtual knots} First we give both diagrammatic and topological interpretations of connected sums. Diagrammatically, a connected sum of virtual knots can be defined as follows. For oriented diagrams $\upsilon_1$ and $\upsilon_2$, form their disjoint union $\upsilon_1 \sqcup \upsilon_2$ in the plane. A \emph{connected sum of $\upsilon_1$ and $\upsilon_2$}, denoted $\upsilon_1 \# \upsilon_2$, is a virtual knot obtained from any single saddle move between the components of the two-component virtual link $\upsilon_1 \sqcup \upsilon_2$. See Figure \ref{fig_connect_sum}. 

\tiny
\begin{figure}[htb]
\begin{tabular}{c}  \def\svgwidth{5in}
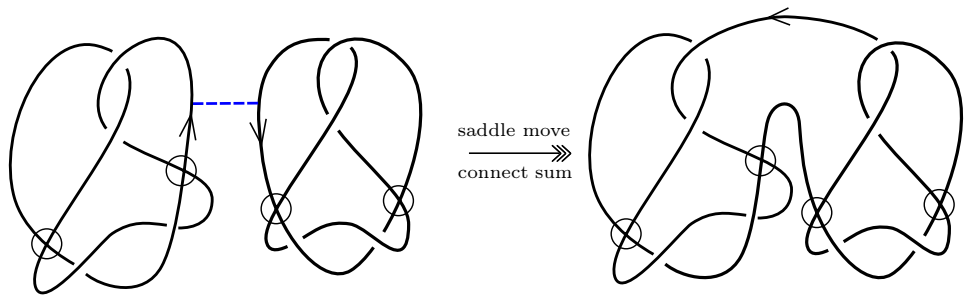 \end{tabular} 
\caption{A connected sum of virtual knots.} \label{fig_connect_sum}
\end{figure}
\normalsize

\begin{figure}[htb]
\begin{tabular}{c}  \def\svgwidth{5in}
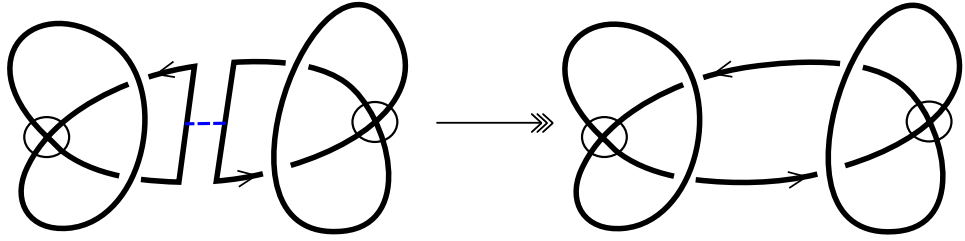 \end{tabular} 
\caption{The Kishino knot (right) as a non-trivial connected sum of two trivial knots.} \label{fig_kishino}
\end{figure}

The connected sum depends both on the initial choice of the diagrams and on the placement of the saddle. It is not in general a well-defined operation; different choices can lead to inequivalent virtual knots. Indeed, Figure \ref{fig_kishino} shows a connected sum of two trivial knots that is not a trivial knot. On the other hand, if a connected sum of virtual knots is trivial, then each summand must be trivial (see Kauffman-Manturov \cite{kauffman_manturov}, Theorem 5). Topologically, a connected sum of two virtual knots can be defined using thickened surface representatives. This leads to the definition of an \emph{annular connected sum} of knots in thickened surfaces. 

\begin{definition}[Annular connected sum and decomposition] Suppose that $K_1 \subset \Sigma_1 \times I$ and $K_2 \subset \Sigma_2 \times I$ are oriented knots.  Let $D_1 \subset \Sigma_1$ and $D_2 \subset \Sigma_2$ be $2$-discs such that for $i=1,2$, $K_i$ intersects the annulus $\partial D_i \times I$ transversely in two points and the arc $l_i=K_i \cap (D_i \times I)$ is unknotted in the $3$-ball $D_i \times I$. Let $\varphi: \partial D_1 \times I \to \partial D_2 \times I$ be an orientation reversing diffeomorphism such that $\varphi(\partial l_1)=\partial l_2$ and $\varphi(\partial D_1 \times \{i\})= \partial D_2 \times \{i\}$ for $i=0,1$. Then $((\Sigma_1 \smallsetminus \text{int}(D_1)) \times I \bigcup_{\varphi} (\Sigma_2 \smallsetminus \text{int}(D_2)) \times I$ is a thickened surface $\Sigma \times I$, where $\Sigma=\Sigma_1\#\Sigma_2$. Set $K:=\overline{K_1\smallsetminus l_1} \cup \overline{K_2 \smallsetminus l_2}$. Then $K \subset \Sigma \times I$ is called an \emph{annular connected sum of $K_1$ and $K_2$}. The inverse operation is called a \emph{annular decomposition}.
\end{definition}
\tiny
\begin{figure}[htb]
\[
\xymatrix{\begin{tabular}{c}\def\svgwidth{4in}
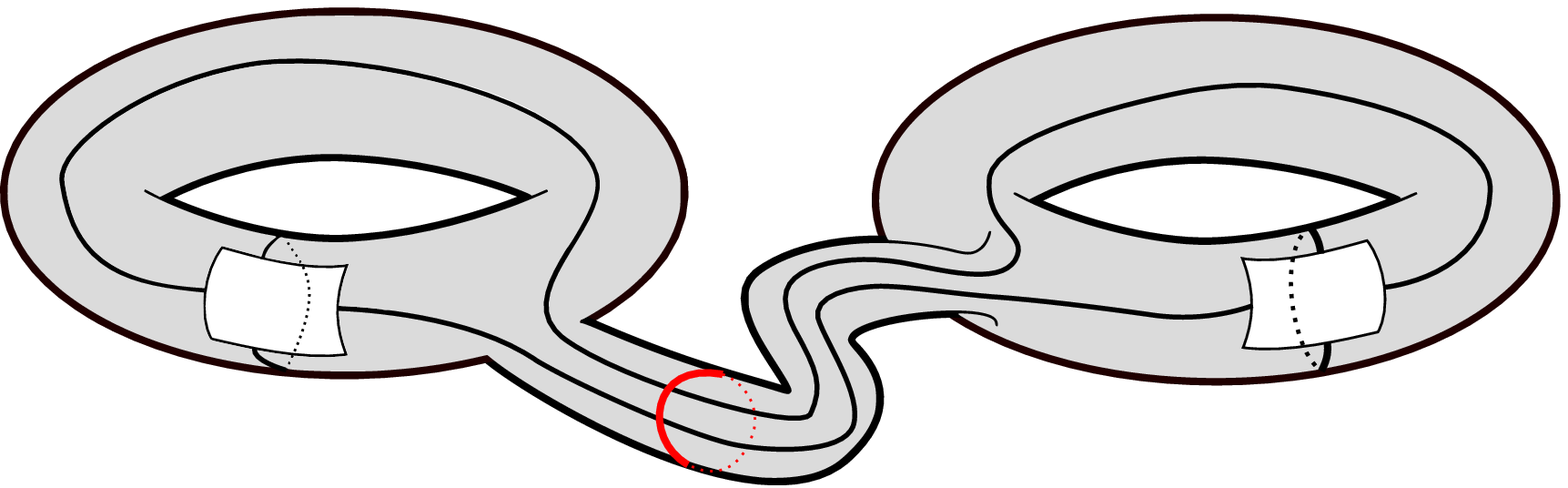 \end{tabular}  \ar@/^3pc/[d]^{\stackanchor{annular}{decomposition}} \\ \begin{tabular}{c} \def\svgwidth{4in}
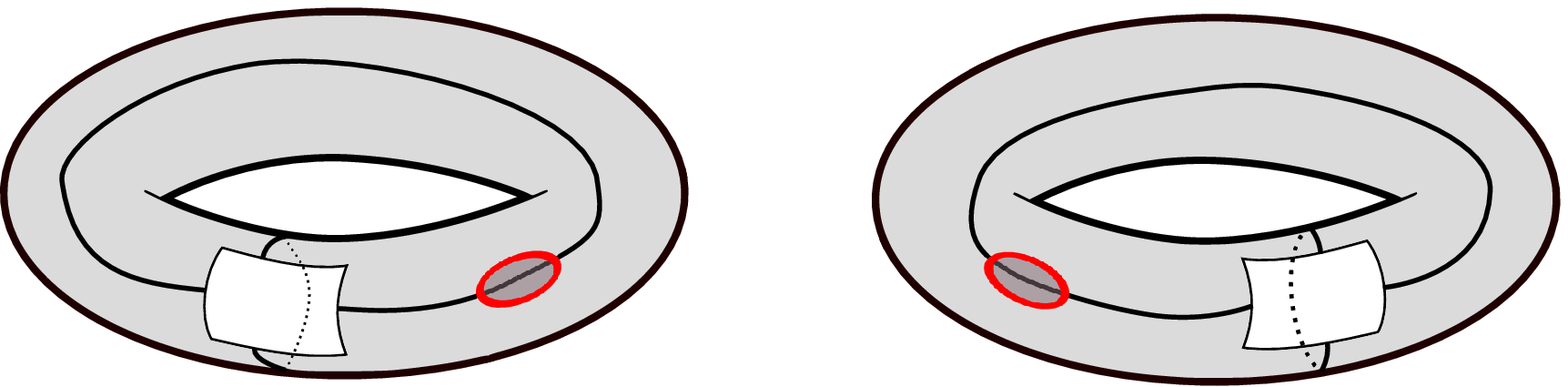\end{tabular} \ar@/^3pc/[u]^{\stackanchor{annular}{connected sum}}}
\]
\caption{An annular decomposition and connected sum.} \label{fig_annular}
\end{figure}
\normalsize

An immediate consequence of the definition is that every connected sum of virtual knots can be realized as an annular connected sum of knots in thickened surfaces and vice versa. Matveev's definition of a prime virtual knot \cite{matveev_roots}, which we give next, does not make explicit reference to either the diagrammatic or topological viewpoint. The topological definition of the connected sum will always be used in our proofs of primeness ahead.

\begin{definition}[Prime virtual knot] \label{defn_prime} A connected sum $\upsilon=\upsilon_1\#\upsilon_2$ is said to be \emph{trivial} if for some $i=1,2$, $\upsilon \leftrightharpoons \upsilon_i$ and $\upsilon_{3-i}$ is trivial. A virtual knot is said to be \emph{prime} if it is nontrivial and it does not admit nontrivial decompositions into a connected sum of virtual knots.
\end{definition}

Matveev proved that every virtual knot can be decomposed into a connected sum of prime and trivial virtual knots with uniquely determined prime summands (see \cite{matveev_roots}, Theorem 13). In \cite{akimova_matveev}, Akimova-Matveev tabulated the prime virtual knots having at most five classical crossings that have virtual genus one.

\begin{example} Figure \ref{fig_conc_example} depicts a concordance of a composite virtual knot to a prime virtual knot.  The top left of Figure \ref{fig_conc_example} shows a non-trivial connected sum of two knots. In Green's tabulation \cite{green} they are $4.99$ (left of dotted blue line) and $4.105$ (right of dotted blue line). Both of these knots have virtual genus one (see \cite{acpaper}, Table 2). They are shown to be prime in \cite{akimova_matveev}. In the notation of \cite{akimova_matveev}, $4.99=4_5$ and $4.105=4_4$ (mirror image). 
\end{example}

To prove primeness using the topological definition, it is necessary to account for the effect of destabilization on an annular decomposition. In particular, if a destabilization annulus intersects the decomposing annulus, it is not always possible to recover the annular decomposition in the destabilized knot. See Figure \ref{fig_special_example} for an example. \emph{Special connected sums and decompositions} can be used to correct for this. 

\normalsize
\begin{figure}[htb]
\[
\xymatrix{\begin{tabular}{cc}\def\svgwidth{3.2in}
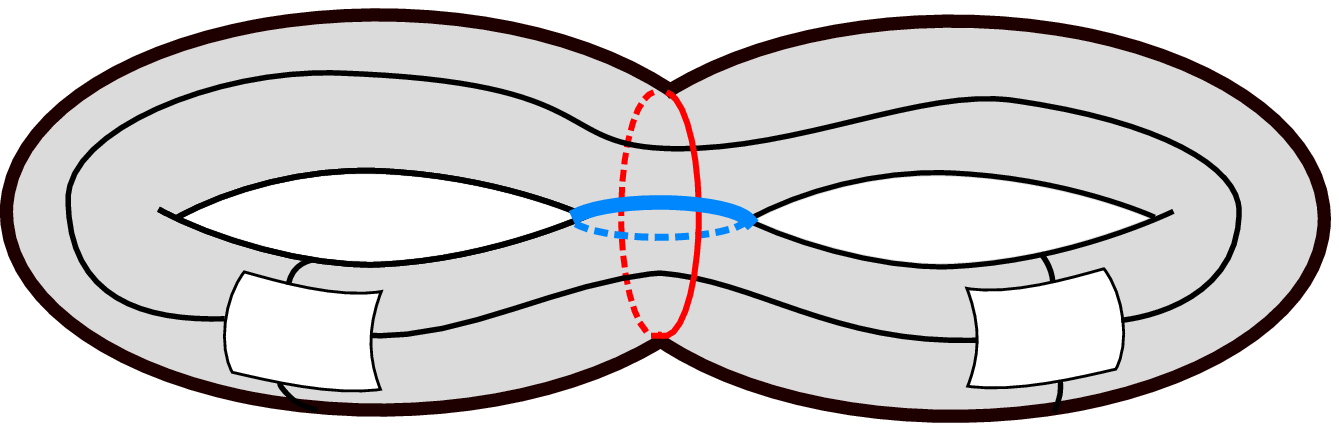 \end{tabular}  \ar@/^5pc/[r]^{\text{destabilize}} & \begin{tabular}{c} \def\svgwidth{2in}
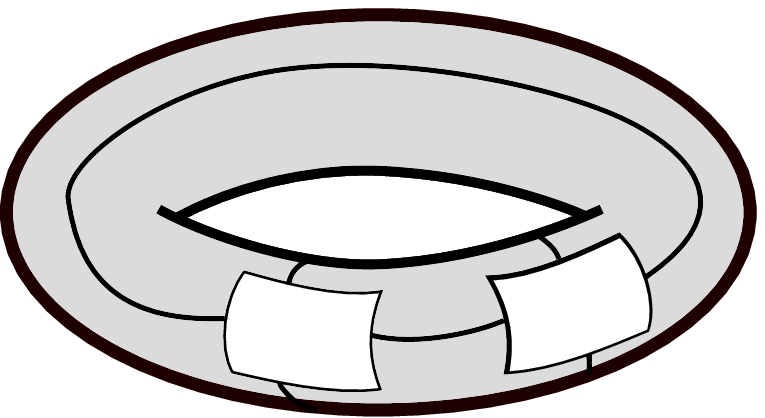\end{tabular} }
\]
\caption{The effect of destabilizing (blue curve) an annular decomposition (red curve).} \label{fig_special_example}
\end{figure}
\normalsize

\begin{definition}[Special connected sum and decomposition] \label{defn_special} For $i=1,2$, let $K_i \subset \Sigma_i \times I$ be a knot and $A_i \subset \Sigma_i \times I$ a vertical annulus intersecting $K_i$ transversely in one point $x_i$. Cutting along $A_i$ gives two copies $A_i'$, $A_i''$ of $A_i$ and two copies $x_i' \subset A_i'$ and $x_i'' \subset A_i''$ of $x_i$. Glue together $A_1' \sqcup A_1''$ to $A_2' \sqcup A_2''$ by an orientation reversing diffeomorphism $A_1' \sqcup A_1'' \to A_2' \sqcup A_2''$ so that $x_1' \sqcup x_1'' \to x_2' \sqcup x_2''$.  The top (bottom) boundary component of each annulus $A_1', A_1''$ must map to the top (respectively, bottom) boundary component of one of $A_2',A_2''$. The resulting knot is called a \emph{special connected sum of $K_1$ and $K_2$}. The inverse operation is called a \emph{special decomposition}. See Figure \ref{fig_special_defn}.
\end{definition}

\tiny
\begin{figure}[htb]
\[
\xymatrix{\begin{tabular}{c}\def\svgwidth{4in}
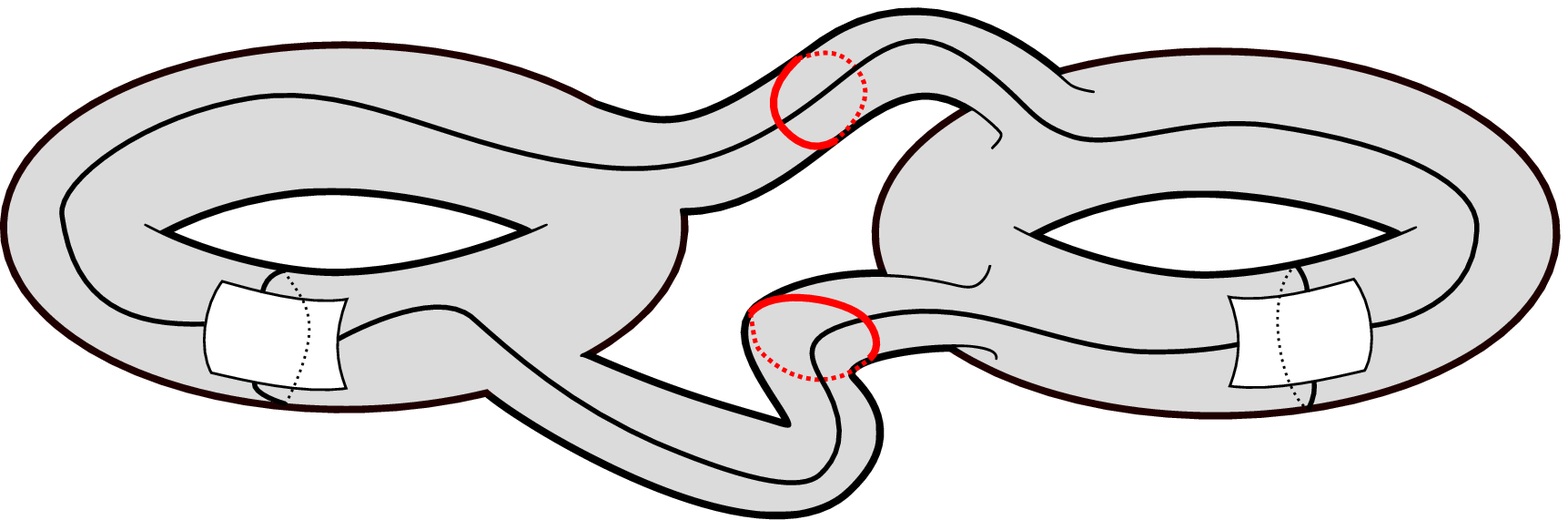 \end{tabular}  \ar@/^3pc/[d]^{\stackanchor{special}{decomposition}} \\ \begin{tabular}{c} \def\svgwidth{4in}
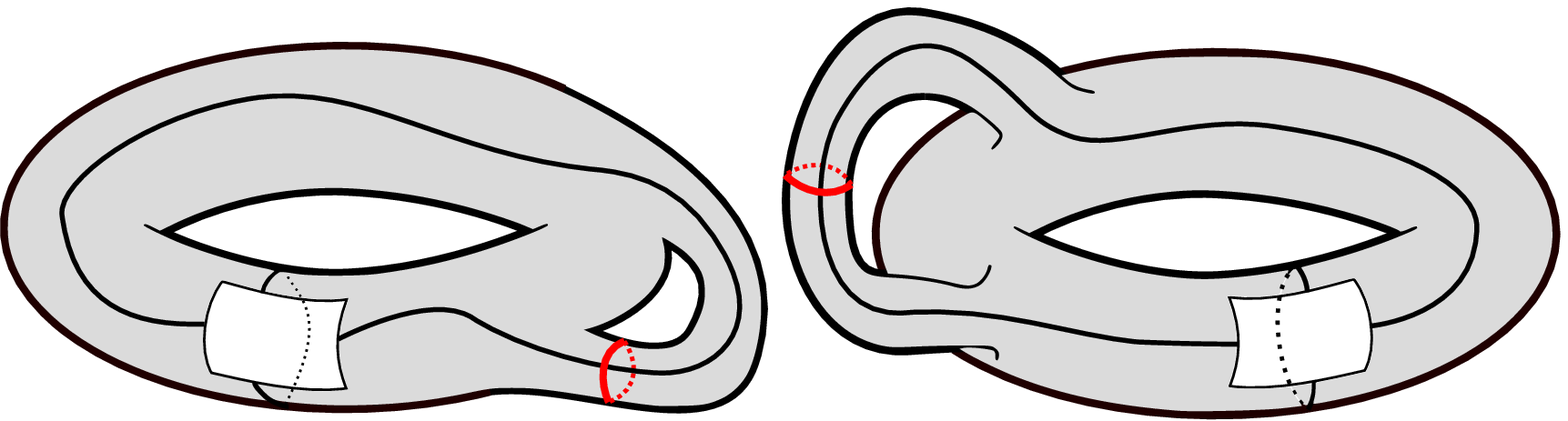\end{tabular} \ar@/^3pc/[u]^{\stackanchor{special}{connected sum}}}
\]
\caption{A special decomposition and connected sum.} \label{fig_special_defn}
\end{figure}
\normalsize

The following important theorem of Matveev states that any nontrivial annular decomposition can be recovered on a minimal representative as either an annular or special decomposition. 

\begin{theorem}[Matveev \cite{matveev_roots}, Lemma 5] \label{thm_mat_kau_mant} If $\upsilon=\upsilon_1\# \upsilon_2$ is a nontrivial connected sum of virtual knots, then a minimal representative of $\upsilon$ has either an annular decomposition or a special decomposition into representatives of $\upsilon_1$ and $\upsilon_2$.
\end{theorem}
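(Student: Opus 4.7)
The plan is to start with a representative $\tilde K\subset\tilde\Sigma\times I$ of $\upsilon$ that already carries an annular decomposition into representatives of $\upsilon_1,\upsilon_2$; such a representative is produced directly from the topological connected sum construction applied to any chosen representatives of the summands, exactly as in the definition of annular connected sum. By Kuperberg's theorem (Theorem \ref{thm_kuperberg}), $\tilde K$ admits a finite sequence of destabilizations ending at a minimal representative $K\subset\Sigma\times I$. The strategy is to track the decomposing annulus through each destabilization, showing inductively that an annular-or-special decomposition persists.

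The heart of the argument is a single-step lemma: \emph{if $K^{(i)}$ admits an annular or special decomposition into representatives of $\upsilon_1,\upsilon_2$, then so does the destabilization $K^{(i+1)}$.} In the annular case, write $A=\partial D\times I$ for a disk $D\subset\Sigma^{(i)}$, and let $\sigma\times I$ be the destabilizing annulus. Place $\sigma$ and $\partial D$ in general position in $\Sigma^{(i)}\smallsetminus\pi(K^{(i)})$ and analyze the arcs of $\sigma\cap D$. Using that $\pi(K^{(i)})\cap D$ is a connected immersed arc (namely $\pi(l_1)$) from $a$ to $b$, an innermost-disk isotopy reduces $|\sigma\cap\partial D|$ whenever one can find an innermost arc of $\sigma\cap D$ whose complementary subdisk $R\subset D$ has $R^\circ$ disjoint from $\pi(l_1)$ and whose $\partial D$-side boundary $\beta$ can be routed past $\pi(K_2)$ in $\Sigma^{(i)}\smallsetminus D$ near any endpoint of $\pi(l_1)$ that happens to lie on $\beta$.

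After all possible reductions, the terminal configuration falls into one of three types, governed by the cyclic placement of $\{a,b\}$ relative to $\sigma\cap\partial D$ on $\partial D$ and by whether $\sigma$ is separating. If $\sigma\cap\partial D=\emptyset$, then $A$ survives destabilization as a vertical annulus $A'\subset\Sigma^{(i+1)}\times I$ that still provides an annular decomposition. If $|\sigma\cap\partial D|=2$, then the two resulting arcs of $\partial D$ get capped off in $\Sigma^{(i+1)}$ into a single simple closed curve bounding a disk whose intersection with $K^{(i+1)}$ remains the unknotted arc $l_1$; this again yields an annular decomposition. In the remaining terminal case the intersections $\sigma\cap\partial D$ are interleaved with $\{a,b\}$ in an irreducible pattern, and after cutting and capping the arcs of $\partial D$ pair up into two separate simple closed curves in $\Sigma^{(i+1)}$, each meeting $K^{(i+1)}$ in exactly one point; this is precisely a special decomposition. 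A parallel analysis handles the case where the decomposition was already of special type.

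The main obstacle is the combinatorial case analysis of the terminal intersection patterns, together with verification that the surviving decomposition really splits $K^{(i+1)}$ into representatives of $\upsilon_1$ and $\upsilon_2$. Preservation of the unknottedness condition reduces to the general fact that an unknotted arc in a $3$-ball remains unknotted in any sub-$3$-ball containing it with endpoints on a common boundary disk, but tracking the summands requires careful bookkeeping of how the capping disks are attached. The nontriviality hypothesis is used at precisely this last point to rule out degenerate terminal outcomes in which destabilization collapses the decomposition entirely (for instance when the surviving annulus would end up bounding a trivial tangle on both sides and hence separate off no knotting), which would otherwise allow the decomposition to be destroyed rather than merely converted between annular and special type.
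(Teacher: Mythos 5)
The paper offers no proof of Theorem \ref{thm_mat_kau_mant}: it is quoted verbatim from Matveev \cite{matveev_roots}, Lemma 5. So your proposal can only be judged on its own merits. Your overall strategy --- realize $\upsilon_1\#\upsilon_2$ by an explicit annular connected sum, destabilize down to a minimal representative via Theorem \ref{thm_kuperberg}, and track the decomposing annulus through each destabilization, letting it convert to a special decomposition when the destabilizing curve crosses it essentially --- is the right one, and it is precisely the phenomenon the paper illustrates in Figure \ref{fig_special_example}. But two steps as written would fail. First, the setup of your single-step lemma is wrong: you take the decomposing annulus to be $\partial D\times I$ for a disk $D\subset\Sigma^{(i)}$ with $\pi(K^{(i)})\cap D$ equal to the unknotted arc $\pi(l_1)$. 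In a general annular decomposition the vertical annulus sits over a separating simple closed curve that splits $\Sigma_1\#\Sigma_2$ into $\Sigma_1\smallsetminus\mathring{D}_1$ and $\Sigma_2\smallsetminus\mathring{D}_2$; neither side is a disk unless a summand is classical, and the portion of $K$ on the $\Sigma_i$ side is $\overline{K_i\smallsetminus l_i}$ --- the \emph{knotted} part of the summand --- while the unknotted arcs $l_i$ were discarded in forming the sum and only reappear when one caps off. Consequently the innermost-disk analysis inside $D$, and the conclusion that the capped-off curve bounds a disk meeting $K^{(i+1)}$ in an unknotted arc, have no meaning in the generic case; the intersection analysis must be carried out for two simple closed curves on a positive-genus surface.

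Second, the trichotomy of terminal configurations is asserted rather than proved. Two simple closed curves in minimal position on a surface can meet in $2k$ points for any $k$, and cutting the decomposing curve along the destabilizing curve and recapping in $\Sigma^{(i+1)}$ produces several closed curves whose total geometric intersection with $K^{(i+1)}$ is $2$ but which could be distributed as one curve meeting $K$ twice together with arbitrarily many meeting it zero times. One must show the surplus curves are removable and, separately, that the one or two surviving annuli still split $K^{(i+1)}$ into representatives of $\upsilon_1$ and $\upsilon_2$; the phrase ``a parallel analysis handles the special case'' also conceals that a special decomposition is a \emph{pair} of annuli, doubling the case analysis at every later destabilization. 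These are exactly the points where the real work of Matveev's lemma lies. Finally, your appeal to nontriviality is misplaced: the hypothesis is needed because when a summand is trivial the decomposition can legitimately vanish on the minimal representative, not merely to exclude an awkward terminal intersection pattern, and your argument never isolates where a trivial summand would actually break the induction.
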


\section{Compressible annuli, minimality, and primeness} \label{sec_compress}

In this section, we prove some elementary facts about minimal representatives, locally trivial knots in thickened surfaces, and prime virtual knots. Together they provide a strategy that will be used to prove Theorems \ref{thm_A}, \ref{thm_B}, and \ref{thm_C}. These results are most easily phrased in terms of compressible vertical annuli in $\Sigma \times I$. First we will review compressible surfaces and vertical annuli in Section \ref{sec_comp_destab}. In Section \ref{sec_comp_decomp}, we investigate the vertical annuli that occur in decompositions and special decompositions. The strategy for proving primeness is given last in Section \ref{sec_prov_prime}.

\subsection{Compressible surfaces and vertical annuli} \label{sec_comp_destab} Let $M$ be a compact orientable $3$-manifold. An embedded circle $C$ in an embedded surface $S\subset M$ is said to be \emph{inessential} if there is a $2$-disc $D$ embedded in $S$ such that $\partial D=C$. An embedded circle that is not inessential will be called \emph{essential}. An embedded surface $S\subset M$ is \emph{compressible} if there is a $2$-disc $D$ in $M$ such that $D \cap S=\partial D$ is essential in $S$. Otherwise it is called \emph{incompressible}. If every boundary component of $M$ is incompressible, $M$ is said to be \emph{boundary irreducible} (or briefly, $\partial$-\emph{irreducible}). If every $2$-sphere embedded in $M$ bounds a $3$-ball in $M$, then $M$ is called \emph{irreducible}.

Now consider the case of the $3$-manifold $\Sigma \times I$, where $\Sigma$ is closed and oriented. First note that $\Sigma \times I$ is boundary irreducible. Furthermore, $\Sigma \times I$ is irreducible if and only if $\Sigma$ is not a $2$-sphere. Henceforth, any embedded surfaces we encounter in $\Sigma \times I$ will be assumed to be two-sided and orientable. If two surfaces intersect in $\Sigma \times I$, we will furthermore assume those intersections are transversal. A typical argument in this paper will analyze the intersections of two surfaces in $\Sigma \times I$ where at most one of the surfaces has non-empty boundary (e.g. an annulus intersecting a sphere or torus). Intersections will thus be assumed to consist of a disjoint union of embedded circles. 

A properly embedded annulus $A \subset \Sigma \times I$ is \emph{vertical} if $p^{-1}(p(A))=A$, where $p:\Sigma \times I \to \Sigma$ is projection onto the first factor. We will also say that any annulus isotopic to a vertical annulus is vertical. Any annulus used in a destabilization, decomposition, or special decomposition is vertical. We record the following fact, without proof, for future reference. 

\begin{lemma} \label{lemma_ball} If $A$ is a compressible vertical annulus in $\Sigma \times I$, then there are $2$-discs $D_0 \subset \Sigma \times \{0\}$ and $D_1 \subset \Sigma \times \{1\}$ such that $A \cup D_0 \cup D_1$ is a $2$-sphere bounding a $3$-ball $B$ in $\Sigma \times I$ with $B \cap \partial(\Sigma \times I)=D_0 \cup D_1$. 
\end{lemma}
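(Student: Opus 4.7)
My plan is to proceed in three steps: reduce to the case $A = \sigma \times I$, show that $\sigma$ bounds a disc in $\Sigma$, and construct the required $3$-ball as a product.

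First, using the definition that ``vertical'' only requires isotopy to a literal product annulus, I would isotope $A$ ambiently in $\Sigma \times I$ so that $A = \sigma \times I$ for a simple closed curve $\sigma \subset \Sigma$; once the conclusion is established in this form, it transports back to the original $A$ via the ambient isotopy. I would then fix a compressing disc $D \subset \Sigma \times I$ for $A$, so $D \cap A = \partial D$ with $\partial D$ essential on $A$. Every essential simple closed curve in an annulus is freely homotopic on the annulus to its core, so $\partial D$ is freely homotopic in $\Sigma \times I$ to $\sigma \times \{1/2\}$. Since $\partial D$ bounds the disc $D$, it is null-homotopic in $\Sigma \times I$; composing with the retraction $p \colon \Sigma \times I \to \Sigma$ onto the first factor then shows that $\sigma$ is null-homotopic in $\Sigma$.

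Next, I would invoke the classical fact (due to Epstein) that a null-homotopic simple closed curve on a closed orientable surface bounds an embedded $2$-disc. Choosing such a disc $\Delta \subset \Sigma$ with $\partial \Delta = \sigma$, I would set
\[
B := \Delta \times I, \qquad D_0 := \Delta \times \{0\}, \qquad D_1 := \Delta \times \{1\}.
\]
Then $B$ is a $3$-ball, its boundary
\[
\partial B \;=\; (\Delta \times \{0\}) \cup (\partial \Delta \times I) \cup (\Delta \times \{1\}) \;=\; D_0 \cup A \cup D_1
\]
is the desired $2$-sphere, and $B \cap \partial(\Sigma \times I) = (\Delta \times I) \cap (\Sigma \times \{0,1\}) = D_0 \cup D_1$.

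The only non-routine input is Epstein's theorem, which upgrades the homotopy statement $[\sigma] = 1$ in $\pi_1(\Sigma)$ to the geometric conclusion that $\sigma$ bounds an embedded disc in $\Sigma$; the rest is a direct product construction. A minor bookkeeping point is verifying that the initial straightening isotopy carries $D_0 \cup D_1$ back into $\Sigma \times \{0,1\}$, but since the isotopy is ambient and respects $\partial(\Sigma\times I)$ setwise, this poses no difficulty.
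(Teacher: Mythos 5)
Your proof is correct. The paper records this lemma explicitly without proof, so there is no argument to compare against; your route (straighten $A$ to $\sigma\times I$, use the compressing disc to see that the core, and hence $\sigma$, is null-homotopic, invoke the fact that a null-homotopic simple closed curve on a surface bounds an embedded disc, and take $B=\Delta\times I$) is the standard one and supplies the omitted details accurately, including the bookkeeping that the ambient straightening isotopy preserves each boundary component of $\Sigma\times I$ setwise.
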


The next technical lemma will be used frequently in the proofs of Theorems \ref{thm_A} and \ref{thm_B}.

\begin{lemma} \label{lemma_experiment_2} Let $A$ be an incompressible vertical annulus in $\Sigma \times I$, $\Sigma \ne S^2$. Suppose that $F$ is a closed orientable surface transverse to $A$ that bounds a compact orientable $3$-manifold $N$ embedded in the interior of $\Sigma \times I$. Furthermore, suppose that there is an embedded annulus $\alpha \subset A$ in the exterior of $N$ such that one component of $\partial \alpha$ is in $F$ and the other coincides with a component of $\partial A$. Then $C$ is essential in $F$ and does not bound a disc embedded in $N$.
\end{lemma}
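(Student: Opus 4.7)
The plan is to reduce both conclusions to the $\partial$-irreducibility of $\Sigma\times I$ (recalled in Section~\ref{sec_comp_destab}) by using $\alpha$ to convert a hypothetical disc bounded by $C$ into a compressing disc for $\partial(\Sigma\times I)$.

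First I would isotope $A$ into standard form $\sigma\times I$ for some simple closed curve $\sigma\subset\Sigma$. Incompressibility of $A$ forces $\sigma$ to be essential in $\Sigma$: otherwise $\sigma$ bounds a disc $E\subset\Sigma$ and $E\times\{1/2\}$ compresses $A$. Consequently each component of $\partial A=\sigma\times\partial I$ is essential in $\partial(\Sigma\times I)$. Next I would verify that $\alpha\cap F=C$, and hence $\alpha\cap N=C$ as well. Since $A$ and $F$ are transverse, $A\cap F$ is a disjoint union of simple closed curves, one of which is $C$; any other such curve lying in the interior of $\alpha$ would, by transversality, force $\alpha$ to cross locally from the exterior of $N$ into $\mathrm{int}(N)$, contradicting the hypothesis that $\alpha$ lies in the exterior of $N$.

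Now suppose for contradiction that $C$ bounds a disc $D$, either with $D\subset F$ (first conclusion) or with $D\subset N$ (second conclusion). By the preceding paragraph, $\alpha\cap D=C$, so $\alpha\cup_{C}D$ is an embedded, piecewise-smooth disc in $\Sigma\times I$. After smoothing the corner along $C$, its boundary is the remaining component of $\partial\alpha$, which is a component of $\partial A$ and therefore an essential simple closed curve in $\partial(\Sigma\times I)$ by the first step. This produces a compressing disc for $\partial(\Sigma\times I)$, contradicting $\partial$-irreducibility.

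The main point of care is the verification that $\alpha\cap F=C$; this is exactly where the ``exterior of $N$'' hypothesis is used. Once it is in place, both conclusions follow from the same cut-and-paste argument against $\partial$-irreducibility, so I do not anticipate any serious technical obstacle beyond being careful with this transversality step.
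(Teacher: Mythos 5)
Your proposal is correct and follows essentially the same route as the paper: glue $\alpha$ to the hypothetical disc $D$ to get an embedded disc with boundary a component of $\partial A$ in $\partial(\Sigma\times I)$, and derive a contradiction from the boundary irreducibility of $\Sigma\times I$ together with the incompressibility of the vertical annulus $A$. Your extra care in checking $\alpha\cap F=C$ (so that $\alpha\cup D$ is genuinely embedded) is a detail the paper passes over silently, but it does not change the argument.
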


\begin{proof} Suppose $C$ bounds a disc $D$ in either $F$ or $N$. Then the disc $\alpha \cup D$ is embedded $\Sigma \times I$ with $\partial (\alpha \cup D) \subset \partial (\Sigma \times I)$. Since $\Sigma \ne S^2$, $\Sigma \times I$ is boundary irreducible and hence $\partial (\alpha \cup D)$ bounds a disc in $\partial (\Sigma \times I)$. Since $A$ is vertical, this implies the contradiction that $A$ is compressible. 
\end{proof}

Finally in this section we have the following well-known result that characterizes minimal representatives of virtual knots in terms of compressible destabilizing annuli.

\begin{theorem} \label{thm_compress} $K \subset \Sigma \times I$ is a minimal representative of a virtual knot if and only if every destabilizing annulus of $K$ is compressible in $\Sigma \times I$.
\end{theorem}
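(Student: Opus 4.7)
The plan is to prove each direction by contrapositive, anchored on the following correspondence: a vertical annulus $\sigma \times I$ is incompressible in $\Sigma \times I$ if and only if $\sigma$ is essential in $\Sigma$. The only-if direction is the contrapositive of Lemma~\ref{lemma_ball}, since an inessential $\sigma$ bounds a disc $D \subset \Sigma$ and an interior translate of $D$ is a compressing disc for $\sigma \times I$. The if-direction follows because an essential $\sigma \hookrightarrow \Sigma$ is $\pi_1$-injective, and this promotes to $\pi_1$-injectivity of $\sigma \times I \hookrightarrow \Sigma \times I$.

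For the forward direction I would assume $K$ is minimal and pick a destabilizing annulus $A = \sigma \times I$. If $A$ were incompressible then $\sigma$ would be essential in $\Sigma$, and performing the destabilization along $\sigma$ --- cut $\Sigma \times I$ along $\sigma \times I$, cap with two thickened discs, and discard the component not containing $K$ --- would yield a representative of the same virtual knot on a surface of strictly smaller genus, whether $\sigma$ is separating or non-separating. This contradicts minimality, so $A$ must be compressible.

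For the converse I argue by contrapositive. Assume $K$ is not minimal; I aim to produce an incompressible destabilizing annulus. By Kuperberg's Theorem~\ref{thm_kuperberg} there is a finite sequence of destabilizations from $K$ to a minimal representative, and since the total genus drops at least one step must strictly reduce genus. A destabilization that fails to reduce genus is necessarily ``trivial'' --- performed along an inessential curve with the knot on the non-disc side --- and returns the pair $(\Sigma, K)$ unchanged up to isotopy. Collapsing any initial string of such trivial steps, I may assume the very first destabilization $K \to K_1$ strictly reduces genus, carried out along some $\sigma \subset \Sigma$ disjoint from $K$. If $\sigma$ is essential, then $\sigma \times I$ is the desired incompressible destabilizing annulus and I am done.

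The main obstacle is the remaining sub-case, where $\sigma$ is inessential yet the destabilization still reduces genus. By Lemma~\ref{lemma_ball} the annulus $\sigma \times I$ together with two capping discs bounds a 3-ball $B \subset \Sigma \times I$, and for the genus to strictly drop one must have $K \subset B$, so $K$ is classical. Since $K$ is not minimal we have $\mathrm{genus}(\Sigma) \geq 1$. I would then use that any two smoothly embedded 3-balls in the connected 3-manifold $\Sigma \times I$ are ambient isotopic to move $B$ onto the standard cylinder $D' \times [\tfrac{1}{3}, \tfrac{2}{3}]$ for a small disc $D' \subset \Sigma$, carrying $K$ to an isotopic knot $K^\sharp \subset D' \times I$. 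Since $\Sigma$ has positive genus, $\Sigma \smallsetminus D'$ contains an essential simple closed curve $\sigma'$; the vertical annulus $\sigma' \times I$ is disjoint from $K^\sharp$ and is incompressible. Pulling back under the reverse ambient isotopy, and invoking the convention from Section~\ref{sec_comp_destab} that any annulus isotopic to a vertical annulus is itself vertical, yields an incompressible destabilizing annulus of $K$, contradicting the hypothesis.
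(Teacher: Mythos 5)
Your proof is correct and follows essentially the same route as the paper: both directions rest on the dictionary ``vertical annulus $\sigma\times I$ is compressible iff $\sigma$ is inessential'' (i.e.\ Lemma~\ref{lemma_ball}) together with Kuperberg's Theorem~\ref{thm_kuperberg}, and your write-up simply makes explicit the steps the paper's two-sentence proof leaves implicit (collapsing trivial destabilizations, and the case of a genus-reducing destabilization along an \emph{inessential} curve, which forces $K$ into a ball). One technical slip in that last sub-case: the ball $B$ produced by Lemma~\ref{lemma_ball} meets $\partial(\Sigma\times I)$ in the two discs $D_0\cup D_1$, so no ambient isotopy of $\Sigma\times I$ can carry it onto the interior cylinder $D'\times[\tfrac13,\tfrac23]$; the isotopy is in any case unnecessary, since the lemma already exhibits $B$ as the vertical cylinder $D\times I$ over the disc $D\subset\Sigma$ bounded by $\sigma$, so $K\subset B$ projects into $D$ and any essential simple closed curve in $\Sigma\smallsetminus D$ directly furnishes the incompressible destabilizing annulus you want.
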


\begin{proof} By Lemma \ref{lemma_ball}, if every destabilizing annulus is compressible, then $K$ admits no genus reducing destabilizations. Conversely, Theorem \ref{thm_kuperberg} implies that every destabilizing annulus of a minimal genus representative of $K$ must intersect $\Sigma \times \{1\}$ in an inessential curve. 
\end{proof}

\subsection{Compressible decompositions} \label{sec_comp_decomp} Compressible annular decompositions can be interpreted in terms of \emph{local knots}. Recall that for a knot $K$ in a $3$-manifold $M$, a \emph{splitting-$S^2$} is a $2$-sphere $S$ smoothly embedded in $\mathring{M}$ that intersects $K$ transversely in two points. If a splitting-$S^2$ bounds a $3$-ball $B^3 \subset M$, then $B \cap K$ is called a \emph{local knot}. A local knot is said to be \emph{trivial} if $K$ intersects $B$ in an unknotted arc. If $M$ is irreducible, we will say that $K$ is \emph{locally trivial} if every splitting-$S^2$ of $K$ bounds a $3$-ball $B$ that intersects $K$ in a trivial local knot (see e.g. \cite{kawauchi}, Definition 3.2.2).

By Lemma \ref{lemma_ball}, every compressible annular decomposition corresponds to a local knot. Conversely, suppose $S$ is a splitting-$S^2$ bounding a $3$-ball $B$ in $\Sigma \times I$. Contract $B$ to a sufficiently small ball so that $B \subset D \times I$ for some disc $D\subset \Sigma$. By an isotopy, it may be arranged so that $(\partial D) \times I$ intersects the knot $K$ transversely in exactly two points. The vertical annulus $\partial D \times I$ is compressible and defines an annular decomposition of $K$. The following theorem states that representatives of non-classical prime virtual knots are locally trivial. The converse is false; there are locally trivial knots in $\Sigma \times I$ that represent composite virtual knots (see e.g. Example \ref{example_hyp_not_prime} ahead).

\begin{theorem} \label{thm_prime_implies_loc_triv} A representative $K \subset \Sigma \times I$ of non-classical prime virtual knot $\upsilon$ is locally trivial.
\end{theorem}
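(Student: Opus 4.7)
\medskip
\noindent\textbf{Proof proposal.} The plan is to convert a hypothetical nontrivial local knot into a nontrivial annular decomposition of $\upsilon$, and then use primeness together with non-classicality to derive a contradiction.

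First, since $\upsilon$ is non-classical, any representative $K\subset \Sigma\times I$ must have $\Sigma\neq S^2$, so $\Sigma\times I$ is irreducible. Let $S$ be a splitting-$S^2$ of $K$; by irreducibility, $S$ bounds a $3$-ball $B\subset \Sigma\times I$, and we must show that the arc $B\cap K$ is unknotted in $B$.

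Next I would convert this splitting sphere into a compressible vertical annulus. By a small ambient isotopy pushing $B$ into a coordinate neighborhood, I may assume $B\subset D\times I$ for some embedded disc $D\subset \Sigma$, and by a further isotopy I can arrange that the vertical annulus $A=\partial D\times I$ intersects $K$ transversely in exactly two points and that $K\cap(D\times I)$ contains $B\cap K$ together with an unknotted arc on the outside of $B$ inside $D\times I$. Then $A$ is a compressible vertical annulus (by Lemma~\ref{lemma_ball} in the direction it is used to create annuli), and cutting $\Sigma\times I$ along $A$ and capping off yields an annular decomposition $\upsilon=\upsilon_1\#\upsilon_2$, where $\upsilon_1$ is represented by the outside piece and $\upsilon_2$ is represented by the knot obtained from the ball $B$ (together with the trivial collar arc from $(D\times I)\setminus B$) by closing up with an unknotted arc; thus $\upsilon_2$ is a classical knot, namely the local knot determined by $B\cap K$.

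Now I invoke primeness. Since $\upsilon$ is prime, the decomposition $\upsilon=\upsilon_1\#\upsilon_2$ is trivial, so one of the summands is the trivial knot and $\upsilon$ agrees with the other. Suppose for contradiction that $\upsilon_2$ is nontrivial; then $\upsilon_1$ is trivial and $\upsilon\leftrightharpoons \upsilon_2$. But $\upsilon_2$ is a classical knot (it has a representative in $S^2\times I$, obtained from the $3$-ball $B$ by capping). This contradicts the assumption that $\upsilon$ is non-classical. Hence $\upsilon_2$ itself must be trivial. Since $\upsilon_2$ is exactly the local knot of $B\cap K$ closed off by an unknotted arc, the arc $B\cap K$ is unknotted in $B$, which is the required local triviality.

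The main obstacle is the second step: ensuring that shrinking $B$ into $D\times I$ and producing the vertical annulus $A$ genuinely realizes the local knot as one of the summands of an annular connected sum, and in particular that the ``complementary'' summand is classical. This requires a careful isotopy so that $A$ meets $K$ in two points and $K\cap(D\times I)\setminus B$ is an unknotted arc, so that the annular decomposition actually separates the local knot from the rest of $K$; once that is in place, primeness and the classical nature of the inside summand deliver the conclusion immediately.
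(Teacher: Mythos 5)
Your proposal is correct and follows essentially the same route as the paper: shrink the $3$-ball bounded by the splitting sphere into a vertical cylinder $D\times I$, realize the local knot as the classical summand of a compressible annular decomposition, and then use primeness plus non-classicality to force that summand to be trivial. The only cosmetic difference is that the paper phrases the final step as a contradiction starting from a hypothetical nontrivial local knot, while you argue directly that the local summand must be trivial; the technical isotopy you flag as the main obstacle is handled in the paper by exactly the same contraction-and-isotopy argument, stated in the remarks immediately preceding the theorem.
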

\begin{proof} Since $K$ is non-classical, $\Sigma \ne S^2$ and $\Sigma \times I$ is irreducible. Suppose that $K$ is not locally trivial. Then there is a splitting-$S^2$ of $K$ that bounds a $3$-ball $B$ in $\Sigma \times I$ such that $K \cap B$ is a nontrivial arc. By the above remarks, there is a decomposition $\upsilon=\upsilon_1 \#\upsilon_2$, where $\upsilon_1$ is a nontrivial classical knot corresponding to $K \cap B$. Since $\upsilon$ is prime, $\upsilon \leftrightharpoons \upsilon_1$. As this contradicts the fact that $\upsilon$ is non-classical, $K$ is locally trivial. \end{proof}

The following technical lemma will be used in the course of the proof of Theorem \ref{thm_A}. It gives a sufficient condition for a decomposing annulus of a minimal representative to be compressible.

\begin{lemma} \label{lemma_tech_compress} Let $K \subset \Sigma \times I$ be a minimal representative of a non-classical virtual knot $\upsilon$ and let $A$ be an annular decomposition for $K$. Suppose that there is a splitting-$S^2$, $S$, of $K$ and $2$-discs $D,D' \subset \Sigma \times I$ satisfying the following properties:
\begin{enumerate}
\item $D \cup D'=S$ and $D \cap D'=\partial D=\partial D' \subset A$,
\item $D \subset A$, and
\item $D\cap K=A \cap K$.
\end{enumerate}
Then $A$ is compressible in $\Sigma \times I$.
\end{lemma}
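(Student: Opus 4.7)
The plan is to produce an annulus $A'$ that is isotopic to $A$ in $\Sigma \times I$ and disjoint from $K$, then apply Theorem~\ref{thm_compress} to $A'$ and transfer the conclusion back to $A$.

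Since $\upsilon$ is non-classical, $\Sigma \ne S^2$, so $\Sigma \times I$ is irreducible and the sphere $S = D \cup D'$ bounds a $3$-ball $B \subset \Sigma \times I$. Set $A^* := A \setminus \mathrm{int}(D)$, a pair of pants in $A$ with boundary $C \cup \partial A$. I would first tally intersections. By transversality, $C \cap K = \emptyset$, and hypothesis (3) gives $|D \cap K| = |A \cap K| = 2$. Because $S$ is a splitting-$S^2$, $|S \cap K| = 2$, so $|D' \cap K| = 0$. Consequently $A^* \cap K = (A \cap K) \setminus (D \cap K) = \emptyset$.

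The main geometric step is a finger move on $A$ that pushes $D$ across the ball $B$ to $D'$, producing the annulus $A' := A^* \cup D'$. This is realized by an ambient isotopy of $\Sigma \times I$ supported in a small neighborhood of $B$ and equal to the identity on $A^*$; consequently $A'$ is an embedded annulus isotopic to $A$. Both $D'$ and $A^*$ are disjoint from $K$, so $A' \cap K = \emptyset$. Because $A$ is vertical, $A'$ is vertical in the extended sense used in Section~\ref{sec_review_virtual}, so $A'$ is a destabilizing annulus for $K$. Since $K$ is minimal, Theorem~\ref{thm_compress} gives that $A'$ is compressible in $\Sigma \times I$; because compressibility is invariant under ambient isotopy, $A$ is compressible.

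The main technical obstacle is to make the finger move rigorous. It runs cleanly when $A \cap B = D$ and $D' \cap A = C$, which I would arrange by standard innermost disc arguments. Let $\gamma$ be an innermost component of $D' \cap \mathrm{int}(A)$ in $D'$, bounding a subdisc $\delta' \subset D'$ with $\mathrm{int}(\delta') \cap A = \emptyset$. If $\gamma$ is essential in $A$, then $\delta'$ is already a compressing disc and the lemma is proved. Otherwise $\gamma$ bounds a disc $\delta \subset A$, and $\delta \cup \delta'$ bounds a $3$-ball $B_0$ by irreducibility. If $\delta \subset A^*$ then $B_0 \cap K = \emptyset$ and one isotopes $D'$ across $B_0$ to remove $\gamma$; if instead $\delta \supset D$ (the case $\gamma$ is parallel to $C$ in $A^*$), the triple $(\delta, \delta', B_0)$ is a strictly smaller instance of the hypothesis in which $\delta' \cap A = \partial \delta'$ automatically, placing us in the clean case. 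Analogous innermost arguments on $A \cap \mathrm{int}(B)$ complete the reduction and justify the finger move.
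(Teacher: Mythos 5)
Your proposal is correct and follows essentially the same route as the paper: both replace $D$ by $D'$ to obtain the surgered annulus $(A\smallsetminus \mathring{D})\cup D'$, recognize it as a destabilizing annulus that is compressible by minimality via Theorem~\ref{thm_compress}, and handle the extra intersection circles of $D'\cap A$ by the same trichotomy (essential in $A$ gives a compressing disc directly; inessential away from $D$ is removed by an innermost-disc isotopy; inessential containing $D$ reduces to the clean case). The one small point to tighten is your removal step: with $\gamma$ chosen innermost on $D'$, the disc $\delta\subset A^*$ may still contain further circles of $D'\cap A$, so the push across $B_0$ is not immediately clean --- choose the circle innermost on $A\smallsetminus\mathring{D}$ instead (as the paper does) and surger $D'$ along it.
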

\begin{proof} Since $K$ is non-classical, $\Sigma \ne S^2$ and $\Sigma \times I$ is irreducible. A splitting-$S^2$ of $K$ has only two intersections with $K$. Since $|D\cap K|=|A\cap K|=2$, $D' \cap K=\emptyset$. Suppose $S$ is chosen to minimize the number of components of $D' \cap A$. If $D'\cap A=\partial D'$, let $A'=(A \smallsetminus D) \cup D'$. Then $A'$ is a destabilization annulus of $K$ and $A'$ is compressible by Theorem \ref{thm_compress}. Since $\Sigma \times I$ is irreducible, the $2$-sphere $D \cup D'$ bounds a $3$-ball and hence there is an isotopy from $A$ to $A'$ that pushes $D$ to $D'$. Thus, $A$ is also compressible.

Suppose that $C \subset D' \cap A$ is an innermost inessential circle in $A \smallsetminus \mathring{D}$. Since $C \cap D=\emptyset$, $C$ bounds a disc $D'' \subset A$ that is disjoint from $D$. On $D'$, $C$ bounds a disc $D'''$ such that $D'' \cap D'''=C$. Again invoking the the fact that $\Sigma\times I$ is irreducible, we have that $D'' \cup D'''$ bounds a $3$-ball in $\Sigma \times I$. The number of components of $D' \cap A$ can be reduced by pushing $D'''$ to $D''$. This contradicts the choice of $S$. Therefore, assume there are no components $C \subset D' \cap A$ that are inessential in $A\smallsetminus \mathring{D}$.

Now, consider a circle $C \subset D' \cap A$ that is innermost on $D'$, $C \ne \partial D'$. Then either $C$ is parallel to $\partial D$ in $A$ or $C$ is essential in $A$. Let $D''$ be the disc in $D'$ bounded by $C$. If $C$ is essential in $A$, then $D''$ is a compression disc for $A$ and the result follows. If $C$ is parallel to $\partial D$, let $D'''\subset A$ be the $2$-disc bounded by $C$, so that $D \subset D'''$. Then $A'=(A \smallsetminus D''') \cup D''$ is a destabilizing annulus for $K$.  By Theorem \ref{thm_compress}, $A'$ is compressible and it follows as above that $A$ is also compressible.    
\end{proof}

Lastly, we consider the vertical annuli in special decompositions, which are never compressible.

\begin{lemma} \label{lemma_special_incompress} If $A=A_1 \sqcup A_2$ is a pair of vertical annuli in a special decomposition, then both $A_1$ and $A_2$ are incompressible in $\Sigma \times I$.
\end{lemma}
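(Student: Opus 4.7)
The plan is to argue by contradiction using a parity count of $|K \cap S|$ for a suitably constructed $2$-sphere $S$. The key combinatorial observation that drives the argument is that, by Definition \ref{defn_special}, each of the two vertical annuli in the pair $A = A_1 \sqcup A_2$ meets the knot $K$ transversely in exactly one point. Indeed, in the glued manifold these annuli are the images of the paired cut annuli $A_1' \cup A_2'$ and $A_1'' \cup A_2''$, each of which carries precisely one identified pair of intersection points (the pair $\{x_1',x_2'\}$ or $\{x_1'',x_2''\}$). By symmetry it suffices to show $A_1$ is incompressible.

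Suppose, for contradiction, that $A_1$ is compressible in $\Sigma \times I$. Apply Lemma \ref{lemma_ball} to obtain $2$-discs $D_0 \subset \Sigma \times \{0\}$ and $D_1 \subset \Sigma \times \{1\}$ such that $S := A_1 \cup D_0 \cup D_1$ is a $2$-sphere bounding a $3$-ball $B \subset \Sigma \times I$ with $B \cap \partial(\Sigma \times I) = D_0 \cup D_1$. Since $K$ lies in the interior of $\Sigma \times I$, it is disjoint from the boundary discs $D_0$ and $D_1$, so
\[
|K \cap S| \;=\; |K \cap A_1| \;=\; 1.
\]
On the other hand, $S$ bounds the $3$-ball $B$ and therefore separates $\Sigma \times I$ into $B$ and its complement. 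Any closed curve crosses such a separating $2$-sphere an even number of times when the intersection is transverse, so $|K \cap S|$ must be even. This contradicts $|K \cap S| = 1$ and forces $A_1$ to be incompressible. The same argument applied to $A_2$ completes the proof.

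The argument is short and essentially formal; the only point requiring care is the bookkeeping that identifies each annulus in the special decomposition as meeting $K$ in exactly one point, which is immediate from unwinding Definition \ref{defn_special}. There is no real obstacle beyond this observation, since the rest is just a mod-$2$ intersection obstruction and does not require any analysis of the ambient topology of $\Sigma$ (the conclusion holds whether or not $\Sigma = S^2$, because the separating sphere $S$ bounds a ball by construction).
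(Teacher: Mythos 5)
Your proof is correct and follows essentially the same route as the paper: the paper's argument is exactly that compressibility plus Lemma \ref{lemma_ball} forces $A_i$ to be separating, which is incompatible with $A_i$ meeting $K$ transversely in a single point. You have merely spelled out the parity count on the sphere $S = A_1 \cup D_0 \cup D_1$ and the bookkeeping from Definition \ref{defn_special} that the paper leaves implicit.
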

\begin{proof} If $A_i$ is compressible, then Lemma \ref{lemma_ball} implies that $A_i$ is separating. But this is impossible as $A_i$ intersects $K$ only once.
\end{proof}

\subsection{Proving primeness} \label{sec_prov_prime} By definition, a knot in the $3$-sphere $S^3$ is prime if and only if it is locally trivial in $S^3$. As discussed above, local triviality of a representative of a virtual knot $\upsilon$ is insufficient to guarantee that $\upsilon$ is prime. The following theorem adds conditions to correct for this.

\begin{theorem} \label{thm_prime} Suppose $K\subset \Sigma \times I$ is a minimal representative of nontrivial virtual knot $\upsilon$. If $K$ is locally trivial, admits only compressible annular decompositions, and admits no special decompositions, then $\upsilon$ is a prime virtual knot. 
\end{theorem}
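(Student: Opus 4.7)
The plan is to argue by contradiction, combining Matveev's dichotomy (Theorem \ref{thm_mat_kau_mant}) with Kuperberg's uniqueness of minimal representatives (Theorem \ref{thm_kuperberg}). Assume $\upsilon$ admits a nontrivial decomposition $\upsilon=\upsilon_1\#\upsilon_2$. By Theorem \ref{thm_mat_kau_mant}, some minimal representative of $\upsilon$ carries an annular or special decomposition realizing this connected sum; by Theorem \ref{thm_kuperberg}, any two minimal representatives differ by an orientation-preserving diffeomorphism of pairs $(\Sigma \times I, K)$, so I may transfer the decomposition onto our given $K$. Since $K$ admits no special decomposition by hypothesis, the decomposition on $K$ must be annular along some vertical annulus $A$, and by hypothesis $A$ is compressible.

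Next I would invoke Lemma \ref{lemma_ball} to convert compressibility into a splitting structure: there are discs $D_0 \subset \Sigma\times\{0\}$ and $D_1\subset \Sigma\times\{1\}$ such that $S:=A\cup D_0\cup D_1$ is an embedded $2$-sphere bounding a $3$-ball $B\subset \Sigma\times I$ with $B\cap \partial(\Sigma\times I)=D_0\cup D_1$. Since $S$ meets $K$ in the two points of $A\cap K$, it is a splitting-$S^2$ for $K$, and local triviality of $K$ forces $K\cap B$ to be a trivial (boundary-parallel) arc in $B$.

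The final step, which I expect to be the main obstacle in writing the argument carefully, is to read off the two summands $\upsilon_1,\upsilon_2$ from this geometric picture and conclude that the decomposition is trivial. The side of $A$ corresponding to $B$, after capping $A$ by a thickened disc $D^2\times I$, becomes $S^2 \times I$ (two $3$-balls glued along an annulus); the summand $\upsilon_1$ there is assembled from the unknotted arc $K\cap B$ together with an unknotted closing arc in $D^2\times I$, and is therefore the trivial classical knot (this is exactly the correspondence used in the proof of Theorem \ref{thm_prime_implies_loc_triv}, read in the reverse direction). The other side, after its cap, is diffeomorphic to $\Sigma\times I$, and $\upsilon_2$ is obtained from $K$ by replacing $K\cap B$ by this same trivial arc; since two unknotted arcs in the ball $B$ sharing endpoints are ambient isotopic rel boundary, this isotopy exhibits $\upsilon_2\leftrightharpoons \upsilon$. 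Therefore the decomposition $\upsilon=\upsilon_1\#\upsilon_2$ is trivial in the sense of Definition \ref{defn_prime}, contradicting its presumed nontriviality. Hence $\upsilon$ is prime.
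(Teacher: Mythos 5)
Your proposal is correct and follows essentially the same route as the paper: invoke Matveev's dichotomy (Theorem \ref{thm_mat_kau_mant}) to get an annular or special decomposition on the minimal representative, rule out the special case by hypothesis, then use compressibility together with Lemma \ref{lemma_ball} and local triviality to see that the resulting local knot is a trivial arc, so the connected sum is trivial. Your explicit appeal to Kuperberg's theorem to transfer the decomposition onto the given $K$, and your careful identification of the two summands at the end, merely fill in details the paper leaves implicit.
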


\begin{remark} The converse of Theorem \ref{thm_prime} is false. For example, the virtual trefoil in Figure \ref{fig_carter} is prime by Akimova-Matveev \cite{akimova_matveev} but admits special decompositions.
\end{remark}

\begin{proof} Suppose that $\upsilon$ is not prime, so that there is a non-trivial decomposition $\upsilon \leftrightharpoons \upsilon_1 \# \upsilon_2$. By Theorem \ref{thm_mat_kau_mant}, $K \subset \Sigma \times I$ inherits either an annular or special decomposition. By hypothesis, $K$ admits no special decompositions. Suppose the annulus $A$ is a decomposition of $K$, $K=K_1 \# K_2 \subset (\Sigma_1 \#\Sigma_2) \times I$, where $K_i$ represents $\upsilon_i$ for $i=1,2$.  By hypothesis, $A$ is compressible in $\Sigma \times I$. The annular decomposition gives a local knot in $\Sigma \times I$. Without loss of generality, assume that the local knot corresponds to $K_2$, so that $\Sigma_2=S^2$ and $\Sigma_1=\Sigma$. Since $K$ is locally trivial, the local knot is an unknotted arc. Then $K_2$ is itself trivial. Moreover, $K$ is ambient isotopic to $K_1$. Thus the decomposition $\upsilon \leftrightharpoons \upsilon_1 \# \upsilon_2$ is trivial, which is a contradiction.
\end{proof}

Since $S^2 \times I$ is not irreducible, the case of classical knots must be treated separately from the case of non-classical virtual knots.  

\begin{theorem} \label{thm_class_virt_prime} Let $\upsilon$ be a nontrivial virtual knot and suppose $\upsilon$ is equivalent to a classical knot $K \subset S^3$. Then $\upsilon$ is a prime virtual knot if and only if $K$ is locally trivial in $S^3$. 
\end{theorem}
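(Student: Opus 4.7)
The plan is to handle each direction by reducing to classical knot theory, exploiting the special geometry of $S^2 \times I$ together with Matveev's structural theorem.

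For the forward direction, I would argue by contrapositive. If $K$ is not locally trivial in $S^3$, then classical knot theory supplies a nontrivial connected sum decomposition $K = K_1 \# K_2$ with both $K_1, K_2$ nontrivial classical knots. This classical decomposition is also a virtual connected sum $\upsilon = \upsilon_1 \# \upsilon_2$, realized as an annular connected sum in $S^2 \times I$ (the construction indicated after Theorem \ref{thm_prime_implies_loc_triv}), where $\upsilon_i$ is the virtual class of $K_i$. Since classical knots embed into virtual knots \cite{GPV}, both $\upsilon_1$ and $\upsilon_2$ are nontrivial, so the virtual decomposition is nontrivial and contradicts primeness of $\upsilon$.

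For the backward direction, suppose $K$ is locally trivial in $S^3$. Because $K$ has a virtual diagram with no virtual crossings, the Carter surface is $S^2$ and so the virtual genus of $\upsilon$ is $0$; hence $K \subset S^2 \times I$ is already a minimal representative. Assuming for contradiction that $\upsilon = \upsilon_1 \# \upsilon_2$ is a nontrivial decomposition, Theorem \ref{thm_mat_kau_mant} supplies either an annular or a special decomposition of $K$. I would rule out the special case by noting that every vertical annulus $\sigma \times I \subset S^2 \times I$ is compressible: the curve $\sigma$ bounds a $2$-disc $D$ on each side in $S^2$, and $D \times \{1/2\}$ serves as a compressing disc. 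Thus Lemma \ref{lemma_special_incompress} forbids a special decomposition. What remains is an annular decomposition, and since $S^2 = \Sigma_1 \# \Sigma_2$ forces $\Sigma_1 = \Sigma_2 = S^2$, this is literally a classical connected sum $K = K_1 \# K_2$ in $S^3$ with $K_i$ representing $\upsilon_i$. Local triviality of $K$ in $S^3$ then forces one of $K_1, K_2$ to be the unknot and the other to be classically equivalent to $K$, making the virtual decomposition trivial -- a contradiction.

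The argument is mostly bookkeeping once the preceding structural results are invoked. The one subtle point to keep track of is that $S^2 \times I$ is not irreducible, which is precisely why Theorem \ref{thm_prime} had to exclude classical knots; however, this same reducibility is what renders every vertical annulus in $S^2 \times I$ compressible, simultaneously eliminating special decompositions and collapsing annular decompositions onto the classical splitting-sphere picture. No separate handling of the compressible-annular-decomposition case is needed here, since in $S^2 \times I$ the two are the same.
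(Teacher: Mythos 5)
Your proof is correct and follows essentially the same route as the paper: compressibility of every vertical annulus in $S^2 \times I$ rules out special decompositions via Lemma \ref{lemma_special_incompress}, and any annular decomposition reduces to a classical connected sum that local triviality in $S^3$ forces to be trivial. The only cosmetic difference is in the ``prime $\Rightarrow$ locally trivial'' direction, where you argue by contrapositive from the existence of a nontrivial classical connected sum decomposition, whereas the paper deforms an arbitrary splitting-$S^2$ of $K$ in $S^3$ into a vertical annulus of $S^2 \times I$ by an innermost-circle argument; both are sound.
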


\begin{proof} Since $\upsilon$ is classical and nontrivial, it has a representative $K \subset S^2 \times I$, where $K$ is not the unknot. Attach $3$-balls to $\partial (S^2 \times I)$ along their boundaries to obtain a copy of $S^3$. All vertical annuli in $S^2 \times I$ are compressible, so $K$ admits no special decompositions by Lemma \ref{lemma_special_incompress}. If $A$ is an annular decomposition of $K$, then Lemma \ref{lemma_ball} implies that $A$ separates $S^2 \times I$ into two $3$-balls. If $K$ is locally trivial in $S^3$, then one of these $3$-balls must intersect $K$ in an unknotted arc. Hence, every annular decomposition is trivial and $\upsilon$ is a prime virtual knot. Conversely, a splitting-$S^2$ of $K$ in $S^3$ can be deformed by an innermost circle argument so that it intersects $S^2 \times I$ in a vertical annulus $A$. Since $\upsilon$ is a prime virtual knot, the annular connected sum defined by $A$ must be trivial. Therefore, one of the the two local knots defined by the compressible annulus $A$ is trivial.
\end{proof}

\section{Prime Satellite Virtual Knots} \label{sec_sat}

In this section, we prove Theorem \ref{thm_A}, that every virtual knot is concordant to a prime satellite virtual knot. In Section \ref{sec_sat_meridians}, we give a review of meridians, longitudes, and linking numbers of knots in $\Sigma \times I$.  Section \ref{sec_sat_prime} gives necessary and sufficient conditions for a satellite knot in $\Sigma \times I$ to represent a prime virtual knot. The proof of Theorem \ref{thm_A} is given last, in Section \ref{sec_proof_1}.

\subsection{Meridians and longitudes} \label{sec_sat_meridians} For an oriented knot $J \subset \Sigma \times I$, let $N(J)$ denote a regular neighborhood of $J$ in the interior of $\Sigma \times I$. By a \emph{meridian} of $J$, we mean an essential embedded circle on $\partial N(J)$ that bounds a disc in $N(J)$. Note that $H_1(\Sigma \times I \smallsetminus J,\Sigma \times \{1\})\cong \mathbb{Z}$ and is generated by an oriented meridian $\mu$ of $J$ (see \cite{acpaper}, Proposition 7.1). For an oriented knot $K \subset \Sigma \times I$ disjoint from $J$, the \emph{linking number of} $J$ \emph{and} $K$ \emph{in} $\Sigma \times I$ is the integer $m$, where $[K]=m \cdot \mu \in H_1(\Sigma \times I \smallsetminus J,\Sigma \times \{1\})$. The linking number is denoted by $\lk_{\Sigma}(J,K)$. The linking number can be computed from a diagram of $J \sqcup K$ on $\Sigma$ by counting, with sign, the number of times that the first component $J$ crosses over the second component $K$. In particular, if $K$ has only over crossings with $J$, $\lk_{\Sigma}(J,K)=0$.  

A \emph{longitude} of a solid torus $N$ is an essential embedded circle on $\partial N$ that generates $H_1(N;\mathbb{Z})$. If $N=N(K)$ is a regular neighborhood of a knot $K \subset \Sigma \times I$, then a longitude of $N(K)$ cobounds an annulus in $N$ with $K$. A \emph{longitude $\ell$ of the knot} $K \subset \Sigma \times I$ is an essential embedded circle  in $\partial N(K)$, such that $[\ell]=0$ in $H_1(\overline{\Sigma \times I\smallsetminus N(K)}, \Sigma \times \{1\};\mathbb{Z})$. The following lemma, whose proof we omit as it is the same as in the classical case, relates a longitude of a regular neighborhood of $K$ to a longitude of the knot $K$. An illustration is given in Figure \ref{fig_long}.

\begin{lemma}\label{lemma_long} Let $K \subset \Sigma \times I$ be a knot and $\ell \subset \partial N(K)$ be an embedded circle. Then $\ell$ is a longitude of the knot $K$ if and only if $\lk_{\Sigma}(K,\ell)=0$ and $\ell$ is a longitude of $N(K)$. Moreover, if $\ell$ is a longitude of the knot $K$, then $\ell$ and $K$ cobound an annulus in $N(K)$.
\end{lemma}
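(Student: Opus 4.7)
The plan is to reduce everything to a linear-algebraic computation on $H_1(\partial N(K))\cong\mathbb{Z}^2$, using a basis $\{\mu_N,\lambda_0\}$ built from a meridian $\mu_N$ of $N(K)$ and a suitably chosen preferred longitude $\lambda_0$. Each of the three conditions in the statement can then be read off the coordinates of $[\ell]$, and the biconditional becomes a two-line check. The moreover clause will follow from the standard isotopy of a solid-torus longitude to its core.

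To set this up, I would first upgrade the cited isomorphism $H_1(\Sigma\times I\smallsetminus K,\Sigma\times\{1\})\cong\mathbb{Z}\cdot\mu$ to the exterior $E:=\overline{\Sigma\times I\smallsetminus N(K)}$. The radial retraction of $N(K)\smallsetminus K$ onto $\partial N(K)$ is a deformation retraction of $\Sigma\times I\smallsetminus K$ onto $E$ that is the identity on $\Sigma\times\{1\}\subset E$, so it induces an isomorphism $H_1(E,\Sigma\times\{1\})\cong\mathbb{Z}\cdot\mu$ under which the class of a meridian $\mu_N\subset\partial N(K)$ of $N(K)$ maps to $\mu$. Picking any longitude $\lambda_N$ of $N(K)$ and letting $a\in\mathbb{Z}$ satisfy $[\lambda_N]=a\mu$ in $H_1(E,\Sigma\times\{1\})$, the curve $\lambda_0:=\lambda_N-a\mu_N$ is then a longitude of $N(K)$ that is null-homologous in $H_1(E,\Sigma\times\{1\})$.

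Using $\{\mu_N,\lambda_0\}$ as a basis for $H_1(\partial N(K))$, an essential embedded circle $\ell\subset\partial N(K)$ has $[\ell]=p\mu_N+q\lambda_0$ with $\gcd(p,q)=1$, and one reads off
\[
\lk_\Sigma(K,\ell)=p,\qquad \ell \text{ is a longitude of } N(K)\iff q=\pm 1,\qquad \ell \text{ is a longitude of the knot}\iff p=0,
\]
the middle equivalence because $\mu_N$ is null-homologous in $N(K)$, and the third by construction of $\lambda_0$. The biconditional then drops out: $\ell$ is a longitude of the knot iff ($p=0$ and $\gcd(p,q)=1$), iff ($p=0$ and $q=\pm 1$), iff $\lk_\Sigma(K,\ell)=0$ and $\ell$ is a longitude of $N(K)$. (When $\ell$ is inessential, neither side holds.) For the moreover statement, any longitude of a solid torus is ambient isotopic in the solid torus to its core, so $\ell$ and $K$ cobound an embedded annulus in $N(K)$.

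The main obstacle I expect is the bookkeeping in the setup step: one must verify that the radial retraction of $\Sigma\times I\smallsetminus K$ onto $E$ is honestly a map of pairs (fixing $\Sigma\times\{1\}$), and that the generator $\mu$ used in the definition of $\lk_\Sigma$ is represented by a meridian of $N(K)$ sitting on $\partial N(K)$. Both points are geometrically clear but must be spelled out in order to translate the abstract definition of $\lk_\Sigma$ into the concrete coordinate computation on $H_1(\partial N(K))$; once that translation is in hand, the rest is two-dimensional linear algebra.
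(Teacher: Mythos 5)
Your proof is correct, and it is precisely the standard classical-case argument that the paper explicitly omits (``whose proof we omit as it is the same as in the classical case''): express $[\ell]$ in the basis of $H_1(\partial N(K))$ given by a meridian and a null-homologous longitude, read off the three conditions as coordinate statements, and isotope a $(1,p)$-curve to the core for the final clause. The only points worth spelling out in a written version are the two you already flag (the retraction onto the exterior respecting $\Sigma\times\{1\}$, and the fact that an essential embedded circle on the boundary torus has primitive homology class), both of which go through as you indicate.
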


\begin{figure}[htb]
\begin{tabular}{c} \def\svgwidth{1.75in}
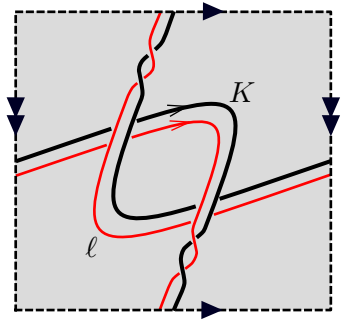 \end{tabular}
\caption{A longitude $\ell$ of a knot $K$ in the thickened torus.} \label{fig_long}
\end{figure} 

This subsection concludes with two lemmas that will be used in the proof of Theorem \ref{thm_A}.

\begin{lemma}\label{lemma_long_triv} Suppose that $\ell$ is a longitude of $K \subset \Sigma \times I$ and that $\ell$ bounds an embedded disc in $\overline{\Sigma \times I \smallsetminus N(K)}$. Then $K$ represents a trivial virtual knot.
\end{lemma}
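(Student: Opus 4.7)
The plan is to manufacture an embedded spanning disc for $K$ in $\Sigma \times I$, and then destabilize to $S^2 \times I$ to conclude $K$ represents the unknot.

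The first step is to produce the spanning disc. By Lemma~\ref{lemma_long}, the longitude $\ell$ cobounds an embedded annulus $A \subset N(K)$ with $K$. The hypothesized disc $D$ lies in $\overline{\Sigma \times I \smallsetminus N(K)}$ with $\partial D = \ell$, while $A$ lies in $N(K)$; they meet only along the shared circle $\ell \subset \partial N(K)$. Hence $D' := A \cup D$, after smoothing the corner along $\ell$, is an embedded $2$-disc in $\Sigma \times I$ with $\partial D' = K$.

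Next, I would pass to a regular neighborhood $B$ of $D'$. Since $D'$ is a disc in the interior of $\Sigma \times I$, $B$ is an embedded $3$-ball containing $K$ in its interior and $D'$ as a properly embedded spanning disc. Invoking the standard fact that any embedded $3$-ball in a connected $3$-manifold may be ambient-isotoped into any prescribed coordinate chart, I perform an ambient isotopy of $(\Sigma \times I, K)$---which does not alter the virtual class---so that $B \subset D_0 \times (0,1)$ for some small $2$-disc $D_0 \subset \Sigma$.

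Finally, set $\sigma := \partial D_0 \subset \Sigma$; the vertical annulus $\sigma \times I$ is disjoint from $K$ and hence a destabilizing annulus. Cutting $\Sigma \times I$ along $\sigma \times I$, capping off with thickened discs, and retaining the component containing $K$ yields $(D_0 \cup_\sigma D_0') \times I \cong S^2 \times I$, in which $K$ still bounds the disc $D' \subset D_0 \times I$. Thus $K$ is isotopic to the unknot in $S^2 \times I$, and so $K$ represents the trivial virtual knot. The only subtlety is the very first step: verifying that $A$ and $D$ meet cleanly along $\ell$ alone, so that their union is a genuinely embedded $2$-disc---after that, the argument is a routine application of $3$-manifold uniqueness of balls together with the definition of destabilization.
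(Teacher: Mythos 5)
Your proposal is correct and follows essentially the same route as the paper: both glue the annulus from Lemma~\ref{lemma_long} to the hypothesized disc to obtain an embedded spanning disc for $K$, and then conclude triviality. The paper simply asserts that a knot bounding a disc in $\Sigma \times I$ is the unknot, whereas you fill in that final step explicitly via the ball-isotopy and destabilization argument.
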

\begin{proof} By Lemma \ref{lemma_long}, there is an annulus $A$ in $N(K)$ such that $\partial A=K \sqcup \ell$. Attach the disc that $\ell$ bounds in $\Sigma \times I \smallsetminus N(K)$ to this annulus. This also results in a disc. Since $K$ bounds a disc in $\Sigma \times I$, it represents the unknot.
\end{proof}

\begin{lemma} \label{lemma_unknot} If a knot $K \subset \Sigma \times I$ cobounds an embedded annulus $\alpha \subset \Sigma \times I$  with a smooth simple closed curve $\gamma$ in $\partial (\Sigma \times I)$, then $K$ is a representative in $\Sigma \times I$ of the trivial virtual knot.
\end{lemma}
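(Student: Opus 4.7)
The plan is to prove the lemma in two steps: first, use $\alpha$ to show that $K$ is ambient isotopic in $\Sigma \times I$ to a horizontal simple closed curve on a level surface, and second, show that every horizontal simple closed curve represents the trivial virtual knot via a sequence of destabilizations. Without loss of generality, assume $\gamma \subset \Sigma \times \{1\}$.

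For the first step, after a small perturbation I would arrange that $\alpha$ meets $\partial(\Sigma \times I)$ exactly in $\gamma$ and coincides with $\gamma \times [1-\epsilon,1]$ inside a collar of $\Sigma \times \{1\}$. Restricting $\alpha$ to $\Sigma \times [0, 1-\epsilon]$ then yields an embedded annulus in the interior of $\Sigma \times I$ with boundary $K \sqcup \gamma^*$, where $\gamma^* := \gamma \times \{1-\epsilon\}$. Since any embedded annulus in a $3$-manifold provides an ambient isotopy between its two boundary circles (via the natural parametrization along the annulus together with the isotopy extension theorem), $K$ is ambient isotopic in $\Sigma \times I$ to the horizontal curve $\gamma^*$, so the two represent the same virtual knot.

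For the second step, I would induct on the genus of $\Sigma$. The base case $\Sigma = S^2$ is immediate since any simple closed curve on $S^2$ bounds a disc and hence is the unknot in $S^2 \times I$. When $\Sigma$ has positive genus and $\gamma^*$ is inessential in $\Sigma$, I would isotope $\gamma^*$ into a small sub-disc $D$ of the disc it bounds on $\Sigma$, then destabilize along a vertical annulus over any non-separating simple closed curve in $\Sigma \smallsetminus D$; this destabilization is valid because the curve is disjoint from $K$, and it strictly reduces the genus. When $\gamma^*$ is essential in $\Sigma$, pick a disjoint parallel copy $c'$ of $\gamma^*$ on $\Sigma$ and destabilize along $c' \times I$, which does not meet $K = \gamma^*$; the annular region of $\Sigma$ cobounded by $\gamma^*$ and $c'$ closes up into a disc bounded by $\gamma^*$ in the destabilized surface, reducing to the inessential case.

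The main obstacle will be verifying the essential subcase of the second step: one must check that destabilization along $c'$ really does make $\gamma^*$ bound a disc in the new surface, and this requires treating the non-separating and separating subcases of $c'$ in $\Sigma$ separately. In either subcase, cutting $\Sigma$ along $c'$ leaves the annulus between $\gamma^*$ and $c'$ with (a copy of) $c'$ as one of its new boundary circles; attaching the corresponding capping disc combines with this annulus to produce a disc bounded by $\gamma^*$, as required. Iterating the reductions eventually produces an unknotted representative of $K$ in $S^2 \times I$, completing the proof.
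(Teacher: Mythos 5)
Your proof is correct and follows essentially the same strategy as the paper's: both arguments first use $\alpha$ together with the isotopy extension theorem to move $K$ onto a horizontal copy of $\gamma$ just inside the boundary, and then destabilize away the genus of $\Sigma$. The only difference is in the second step: where you run an induction on genus with a case analysis on whether $\gamma^*$ (and the auxiliary curve $c'$) is inessential, essential separating, or essential non-separating, the paper short-circuits all of this by destabilizing simultaneously along the two components of $\partial A \times I$, where $A$ is an annular neighborhood of $\gamma$ in the boundary surface; the component containing $K$ is then $A$ capped off with two discs, i.e.\ $S^2\times I$, and the leftover collar piece of $\alpha$ glued to a disc bounded by $\gamma$ in $S^2$ exhibits $K$ as the unknot in one stroke. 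Your case analysis is sound, so the difference is purely one of economy.
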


\begin{proof} For some $j=0,1$, $\gamma \subset \Sigma \times \{j\}$. Let $A$ be a closed tubular neighborhood of $\gamma$ in $\Sigma \times \{j\}$ and let $U=A \times I$. Let $H:S^1 \times I \to \Sigma \times I$ be a parametrization of the embedding of $\alpha$ into $\Sigma \times I$. Then there is an $\varepsilon>0$ such that $H(S^1 \times [1-\varepsilon,1]) \subset U$. View the map $H$ as an isotopy taking $K$ to the knot $K'=H(S^1 \times \{1-\varepsilon\}) \subset U$. By the isotopy extension theorem, $K$ is ambient isotopic to $K'$. Now, $\partial A \times I$ consists of two vertical annuli each of which is disjoint from $K'$. Destabilizing along these annuli gives $K'$ as a knot in $S^2 \times I$. The curve $\gamma$, now contained in $S^2 \times \{j\}$, bounds a disc $D$ in $S^2 \times \{j\}$. Attach $D$ to the annulus $H(S^1 \times [1-\varepsilon,1])$ (or more exactly, its image after destabilization). This gives a disc $D'$ with $\partial D'=K'$. Pushing $D'$ into the interior of $S^2 \times I$ shows that $K'$ bounds a disc in $S^2 \times I$ and hence $K'$ is trivial.  
\end{proof}

\subsection{Prime satellite virtual knots} \label{sec_sat_prime} Let $P$ be a knot in $S^1 \times B^2$ and $K \subset \Sigma \times I$, where $P$ is not contained in a $3$-ball in the interior of $S^1\times B^2$. Let $\psi:S^1 \times B^2 \to N(K)$ be a diffeomorphism mapping an oriented meridian of $S^1 \times B^2$ to an oriented meridian of $N(K)$ and the longitude $S^1\times \{1\}$ of $S^1 \times B^2$ to a longitude of $N(K)$. Then $\psi(P)$ is called a \emph{satellite knot with pattern $P$ and companion $K \subset \Sigma \times I$}. If a virtual knot has a representative in some thickened surface $\Sigma \times I$ that is a satellite knot in $\Sigma \times I$, we will say that it is a \emph{virtual satellite knot}. The following theorem relates the virtual genus of a virtual satellite knot to that of its companion.

\begin{figure}[htb]
\begin{tabular}{ccc} \begin{tabular}{c} \def\svgwidth{1.75in}
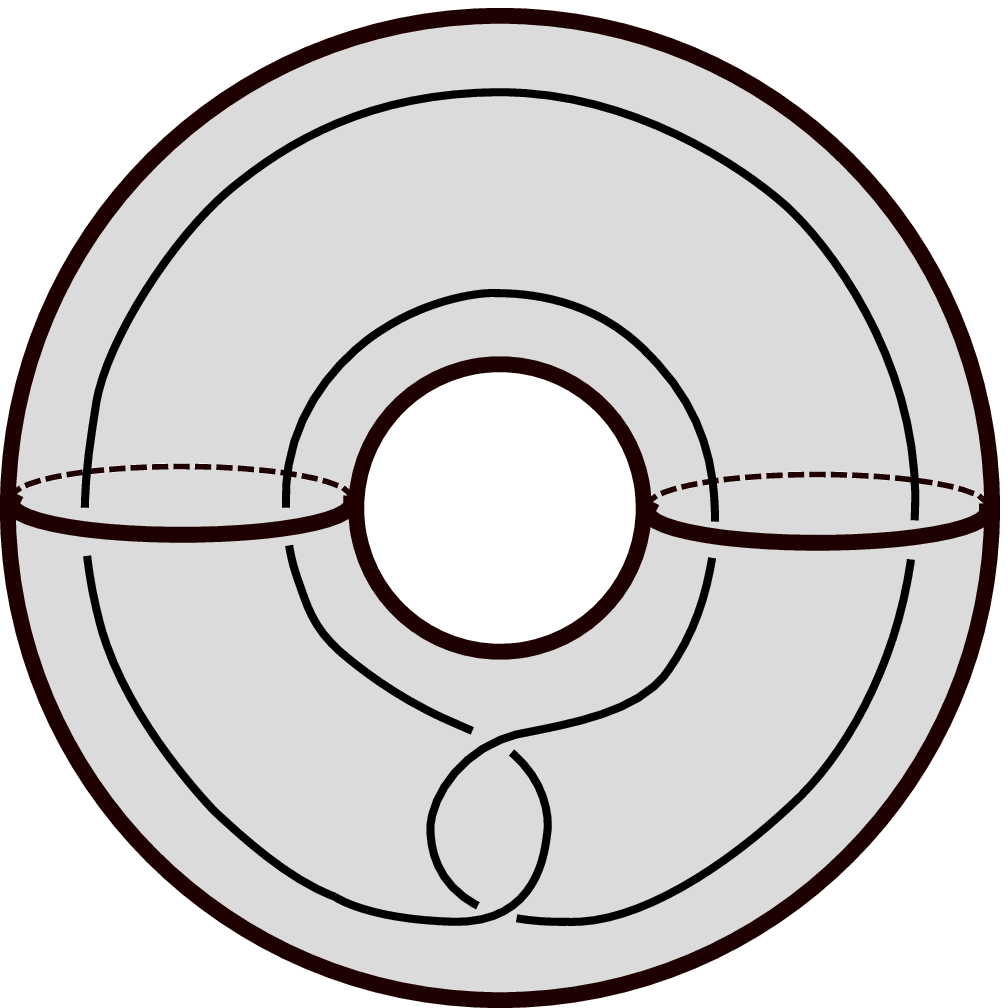 \end{tabular} & \begin{tabular}{c} \def\svgwidth{1.75in}
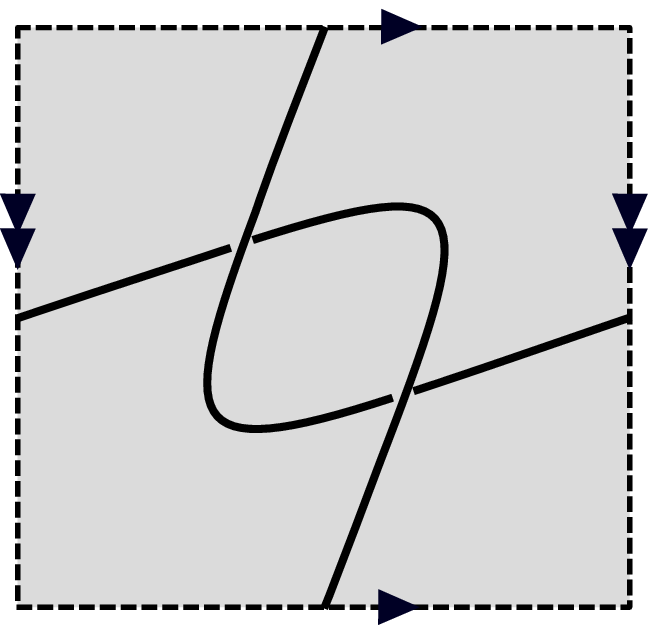 \end{tabular} & \begin{tabular}{c} \def\svgwidth{1.75in}
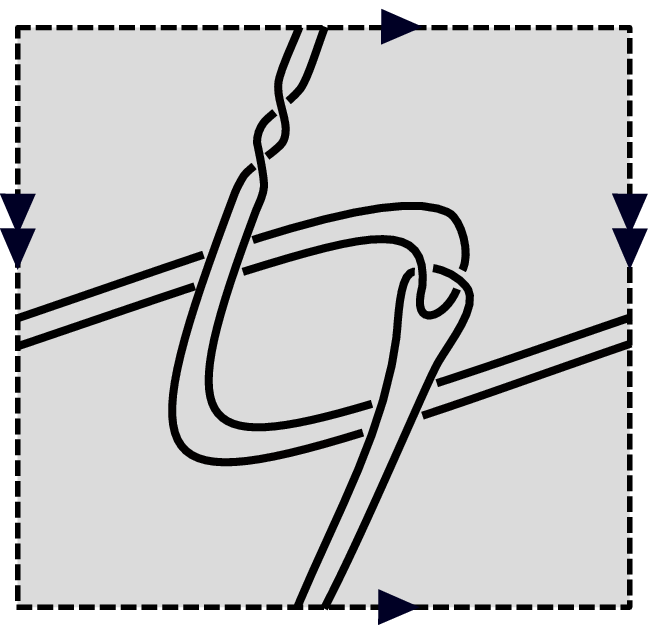 \end{tabular} \end{tabular}
\caption{A satellite knot (right) with companion $K$ (center) and pattern $P$ (left).} \label{fig_sat_defn}
\end{figure} 

\begin{theorem}[Silver-Williams \cite{sil_will_sat}, Theorem 1.2] \label{thm_sw} A satellite knot with companion $K \subset \Sigma\times I$ has the same virtual genus as $K$.
\end{theorem}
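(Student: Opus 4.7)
The plan is to prove the two inequalities $\vlk(P')\leq \vlk(K)$ and $\vlk(K)\leq \vlk(P')$ separately. The first is immediate: take $K$ on a minimal thickened surface $\Sigma\times I$ with $g(\Sigma)=\vlk(K)$, and form the satellite $P'\subset N(K)\subset \Sigma\times I$; this realizes the virtual knot of $P'$ on the same surface, so $\vlk(P')\leq g(\Sigma)=\vlk(K)$.

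For the reverse inequality, the strategy is to show that this $P'$ is itself a minimal representative on $\Sigma\times I$. The case $\vlk(K)=0$ is classical and can be handled separately, so assume $\Sigma\neq S^2$ (in particular, $\Sigma\times I$ is irreducible). Suppose, for contradiction, that $P'$ is not minimal on $\Sigma\times I$. By Theorem~\ref{thm_compress}, there is an incompressible vertical annulus $A=\sigma\times I$, with $\sigma$ essential on $\Sigma$, such that $A\cap P'=\emptyset$. Isotope $A$ through properly embedded annuli in $\Sigma\times I\smallsetminus P'$ to make it transverse to $T:=\partial N(K)$ with $|A\cap T|$ minimized. If one can show $A\cap T=\emptyset$, then $A$ lies entirely outside $N(K)$ (since $\partial A\subset \partial(\Sigma\times I)$ while $N(K)$ is in the interior), hence is disjoint from $K$. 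In that case $A$ is an incompressible destabilizing annulus for $K$, contradicting minimality of $K$ by Theorem~\ref{thm_compress}.

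The task therefore reduces to eliminating $A\cap T$ by innermost-disk arguments, classifying each component $c\subset A\cap T$ by its behavior on $A$ and on $T$. A circle inessential on $T$ yields, together with an innermost disk on $A$ or on $T$, a $2$-sphere bounding a $3$-ball in $\Sigma\times I$ (by irreducibility), across which $A$ can be isotoped to reduce $|A\cap T|$. A circle essential on $A$ but inessential on $T$ would give a compressing disk for $A$, contradicting its incompressibility. If $c$ is inessential on $A$ but essential on $T$, it bounds a disk $D\subset A$ with $D\cap T=c$; if $D\subset N(K)$, then $c$ must be a meridian of $N(K)$ (the only essential curves on $T$ that bound in $N(K)$), so $D$ is a meridian disk disjoint from $P'$, contradicting the pattern hypothesis that $P$ is not contained in a $3$-ball in $S^1\times B^2$ (so no meridian disk of $N(K)=\psi(S^1\times B^2)$ can miss $P'=\psi(P)$).

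The delicate case is when every component of $A\cap T$ is essential on both surfaces (so all components are parallel on $A$). Consider the outermost annular piece $A_0\subset A$ lying outside $N(K)$, with one boundary on $\partial A\subset \partial(\Sigma\times I)$ and the other an essential curve $C\subset T$. Lemma~\ref{lemma_experiment_2}, applied with $F=T$ and $N=N(K)$, shows $C$ is essential on $T$ and does not bound a disk in $N(K)$; in particular $C$ is not a meridian. Since $C$ cobounds $A_0$ with a simple closed curve in $\partial(\Sigma\times I)$, Lemma~\ref{lemma_unknot} implies $C$ is trivial as a virtual knot in $\Sigma\times I$. Combined with Lemma~\ref{lemma_long_triv} (viewing an appropriate translate of $C$ as a longitude of $K$ bounding in the exterior, after adjusting by a meridional multiple), this forces $K$ itself to be trivial as a virtual knot, contradicting $\vlk(K)\geq 1$.

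The principal obstacle is precisely this essential-on-both configuration, where innermost-disk reductions are unavailable and one must extract rigidity directly from the annular structure of $A\smallsetminus T$ relative to $N(K)$. All the tools needed are already in hand: Lemma~\ref{lemma_experiment_2} rules out meridional boundary on $T$, Lemmas~\ref{lemma_long_triv} and \ref{lemma_unknot} rule out longitudinal and boundary-parallel configurations, and the pattern hypothesis rules out the remaining meridional disk cases. Assembling these reductions completes the proof.
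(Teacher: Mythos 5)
First, a point of comparison: the paper does not prove Theorem~\ref{thm_sw} at all --- it is imported from Silver--Williams --- so there is no in-paper proof to measure you against. The closest analogues are paragraph ($\star\star\star$) in the proof of Lemma~\ref{lemma_lemma}(2) and Lemma~\ref{lemma_min_genus_preserved}, which carry out precisely the kind of analysis of $A\cap\partial N(K)$ that you set up. Your overall architecture is the right one: reduce to $K$ minimal, show the satellite representative in the same $\Sigma\times I$ is itself minimal via Theorem~\ref{thm_compress}, and eliminate the circles of $A\cap T$ case by case, with the easy inequality coming for free from $N(K)\subset\Sigma\times I$.

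The gap is in your final case, where every component of $A\cap T$ is essential on both surfaces. The closing sentence --- invoking Lemma~\ref{lemma_long_triv} ``after adjusting by a meridional multiple'' --- is not an argument. Lemma~\ref{lemma_long_triv} requires a longitude of $K$ bounding a \emph{disc} in the exterior, whereas your $C$ only cobounds an \emph{annulus} $A_0$ with a curve in $\partial(\Sigma\times I)$; and $C$ is a priori a $(p,q)$-curve on $T$ with $q\neq 0$ but possibly $|q|\geq 2$, and one cannot isotope an embedded curve on a torus to change its slope by ``a meridional multiple.'' What actually closes the case is this: push $C$ along $A_0$ toward $\partial(\Sigma\times I)$, as in paragraph ($\star\star\star$), to get $\lk_{\Sigma}(K,C)=0$; writing $[C]=p[\mu]+q[\lambda_0]$ with $\lambda_0$ a longitude of the knot $K$ in the sense of Lemma~\ref{lemma_long}, the vanishing linking number forces $p=0$, and since $C$ is embedded and essential we have $\gcd(p,q)=1$, hence $q=\pm 1$ and $C$ \emph{is} a longitude of the knot $K$. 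Lemma~\ref{lemma_long} then gives an annulus in $N(K)$ from $K$ to $C$, which glued to $A_0$ exhibits $K$ cobounding an annulus with a simple closed curve in $\partial(\Sigma\times I)$, and Lemma~\ref{lemma_unknot} makes $K$ trivial --- the desired contradiction. (Your route ``$C$ is trivial by Lemma~\ref{lemma_unknot}, hence $K$ is trivial'' also works once $C$ is known to be a longitude, since then $C$ and $K$ cobound an annulus in $N(K)$ and are isotopic; the winding-number-one computation is exactly the missing step.) A second, smaller omission: when a circle is inessential on $A$ but essential on $T$, the innermost disc $D\subset A$ it bounds may lie \emph{outside} $N(K)$ rather than inside; that sub-case is handled by the same linking-number computation together with Lemma~\ref{lemma_long_triv}, as in paragraph ($\star$) of the proof of Lemma~\ref{lemma_lemma}(1), and should not be skipped.
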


\begin{example} \label{ex_connect_not_sat} Every connected sum $K_1\# K_2$ of classical knots can be viewed as a satellite with pattern $K_2$ and companion $K_1$ (or vice versa). A connected sum $\upsilon_1 \#\upsilon_2$ of virtual knots need not be a satellite with companion equivalent to either $\upsilon_1$ or $\upsilon_2$. Indeed, a connected sum of virtual knots satisfies $g(\upsilon_1 \# \upsilon_2) \ge g(\upsilon_1)+g(\upsilon_2)-1$, where $g(\upsilon)$ denotes the virtual genus of $\upsilon$ (see \cite{kauffman_manturov}, Theorem 4). Thus, as long as $g(\upsilon_1),g(\upsilon_2) \ge 2$, Theorem \ref{thm_sw} implies that a connected sum cannot be a satellite with companion equivalent to either $\upsilon_1$ or $\upsilon_2$.  
\end{example}

For a knot $P \subset S^1 \times B^2$, the \emph{winding number} is the absolute value of the integer $[P] \in H_1(S^1 \times B^2) \cong \mathbb{Z}$. The \emph{wrapping number} is the minimum number of intersections of $P$ with some disc $D \subset S^1 \times B^2$, where the minimum is taken over all discs $D$ such that $\partial D$ is a meridian of $S^1 \times B^2$. The next result is the main theorem of this section. It states that when a pattern has sufficiently large wrapping number, the satellite of a non-classical virtual knot is prime if and only if the pattern is locally trivial. This extends Livingston's theorem (\cite{livingston}, Theorem 4.2) to the non-classical case.

\begin{theorem}\label{lemma_conc_to_sat} Let $P \subset S^1 \times B^2$ have wrapping number greater than one and let $K \subset \Sigma \times I$ be a minimal representative of a non-classical virtual knot $\upsilon$. Then a satellite knot with pattern $P$ and companion $K \subset \Sigma \times I$ represents a prime virtual knot if and only if $P$ is locally trivial in $S^1 \times B^2$.
\end{theorem}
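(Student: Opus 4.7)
The forward direction is immediate: writing $S = \psi(P) \subset N(K)$ for the satellite, Theorem~\ref{thm_sw} guarantees that $S$ is a minimal non-classical representative of its virtual knot class $\upsilon_S$. If $\upsilon_S$ is prime, then by Theorem~\ref{thm_prime_implies_loc_triv} the representative $S$ is locally trivial in $\Sigma \times I$. Any splitting-$S^2$ of $P$ in $S^1 \times B^2$ pushes forward via $\psi$ to a splitting-$S^2$ of $S$ lying in $N(K)$; irreducibility of both $N(K)$ and $\Sigma \times I$ shows it bounds the same $3$-ball in either, and local triviality of $S$ then pulls back to local triviality of $P$.

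For the converse, assume $P$ is locally trivial and set $T = \partial N(K)$, $X = \overline{\Sigma \times I \setminus N(K)}$. The plan is to verify the three hypotheses of Theorem~\ref{thm_prime} for the minimal representative $S$. Two standing facts drive all of the arguments. First, $T$ is incompressible in $X$: a meridional compressing disc in $X$ would produce a $2$-sphere meeting $K$ in a single point inside the irreducible manifold $\Sigma \times I$, which is impossible, while a longitudinal compressing disc would trivialize $K$ via Lemma~\ref{lemma_long_triv}; both contradict $K$ being a nontrivial non-classical minimal representative. Second, the wrapping number hypothesis implies that every properly embedded meridional disc of $N(K)$ meets $S$ in at least two points.

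For Theorem~\ref{thm_prime}(i), local triviality of $S$: let $F$ be a splitting-$S^2$ transverse to $T$ and minimizing $|F \cap T|$ in its isotopy class. If $F \cap T = \emptyset$, then $F \subset N(K)$ and local triviality of $P$ finishes via $\psi^{-1}$. Otherwise, an innermost-disc exchange argument on $F$ (using irreducibility of $\Sigma \times I$ and the fact that the non-classical knot $S$ cannot lie inside a $3$-ball) reduces to the case where every circle of $F \cap T$ is essential on $T$; such circles are pairwise parallel on $T$ and nested on $F$. Incompressibility of $T$ in $X$ then forces both the innermost disc $D$ and the outermost disc $D^*$ of $F$ (whose interiors avoid $T$) to lie in $N(K)$ as meridional discs. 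The wrapping hypothesis yields $|D \cap S| \geq 2$ and $|D^* \cap S| \geq 2$, but $D$ and $D^*$ are disjoint subdiscs of $F$, contradicting $|F \cap S| = 2$. Hence $F \cap T = \emptyset$, as required.

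For Theorem~\ref{thm_prime}(ii) and (iii), the same intersection-minimization framework applies with a vertical annulus in place of a sphere. Given an annular decomposition $A$ (with $|A \cap S| = 2$), innermost-disc analysis of $A \cap T$, combined with Step~(i) applied to a splitting-$S^2$ obtained by capping off a sub-disc of $A$ that encloses both points of $A \cap S$, yields the hypotheses of Lemma~\ref{lemma_tech_compress}, whence $A$ is compressible. For a special decomposition one has incompressible vertical annuli $A_1, A_2$ with $|A_i \cap S| = 1$; innermost-disc analysis combined with the wrapping-number lower bound of two on any meridional sub-disc of $A_i$ is incompatible with this single intersection point, ruling out any such decomposition. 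The main obstacle is the case analysis in Step~(i): the crucial insight is that the wrapping-number doubling simultaneously applies to the innermost and outermost discs of $F$, producing the $4 > 2$ intersection contradiction that forces $F \cap T = \emptyset$, thereby reducing primeness of the satellite to local triviality of the pattern inside the solid torus.
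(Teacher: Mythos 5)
Your forward direction and your verification of condition (1) of Theorem \ref{thm_prime} (local triviality of the satellite) are correct and essentially reproduce the paper's argument: minimize $|F\cap T|$, use incompressibility of $T=\partial N(K)$ on the outside to push the residual disc regions of $F\smallsetminus T$ into $N(K)$ as meridional discs, and let the wrapping-number bound produce too many intersections with $S$. That part stands.

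The gap is in conditions (2) and (3). For a decomposing annulus $A$ the circles of $A\cap T$ come in more types than for a sphere, and the one you never address is the decisive one: a circle $C$ that is \emph{essential in $A$}, i.e.\ cobounds a sub-annulus $\alpha\subset A$ with a component of $\partial A\subset\partial(\Sigma\times I)$. Such a $C$ bounds no disc in $A$ at all, so neither ``innermost-disc analysis'' nor ``capping off a sub-disc enclosing both points of $A\cap S$'' sees it, and the wrapping-number bound is silent because no meridional disc is produced. The paper's proof (paragraph $(\star\star\star)$ of Lemma \ref{lemma_lemma}) handles exactly this case by taking $C$ outermost, observing that $\alpha\subset\overline{\Sigma\times I\smallsetminus N}$ forces $\lk_{\Sigma}(K,C)=0$, invoking Lemma \ref{lemma_experiment_2} to see that $C$ is essential on $T$, concluding via Lemma \ref{lemma_long} that $C$ is a longitude of the knot $K$, and then deriving from Lemma \ref{lemma_unknot} that $K$ is trivial --- a contradiction. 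Even after these circles are excluded, the remaining configuration (all components of $A\cap T$ bounding discs in $A$ that contain both points of $A\cap S$ and all of them meridians of $N$) is not eliminated by Lemma \ref{lemma_tech_compress} alone: that lemma only applies when such a circle is inessential on $T$ and can be capped off there, and the paper needs a separate parity argument on the annuli into which these concentric meridians cut $T$ to rule out the essential case. Neither ingredient appears in your sketch, and the same omission recurs in your treatment of special decompositions, so the claims that $S$ admits only compressible annular decompositions and no special decompositions are not yet proved.
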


\begin{proof} If $P$ is not locally trivial, then there is a $3$-ball $B$ in the interior of $S^1 \times B^2$ such that $S=\partial B$ is a splitting-$S^2$ and $B \cap P$ is nontrivial. In other words, $P$ contains a nontrivial local knot. Then the satellite with pattern $P$ and companion $K$ must also contain a nontrivial local knot and hence cannot be prime. The converse follows from  Theorem \ref{thm_prime} and the next lemma.
\end{proof}

\begin{lemma} \label{lemma_lemma} Let $K \subset \Sigma \times I$ be a minimal representative of a non-classical virtual knot $\upsilon$. Let $P \subset S^1 \times B^2$ be a locally trivial knot with wrapping number greater than one and let $K' \subset \Sigma \times I$ be a satellite knot with pattern $P$ and companion $K$.  Then:
\begin{enumerate}
\item $K'$ is locally trivial,
\item $K'$ admits only compressible annular decompositions, and
\item $K'$ admits no special decompositions.
\end{enumerate}
\end{lemma}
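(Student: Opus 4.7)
The plan is to handle all three parts in parallel by studying the intersection of the relevant embedded surface --- a splitting $S^2$ in (1), a decomposing annulus in (2), or a pair of special decomposing annuli in (3) --- with the companion torus $T=\partial N(K)$.  The surface is made transverse to $T$ and its intersection with $T$ is reduced by standard innermost-disc surgeries using irreducibility of $\Sigma\times I$ and of the solid torus $N(K)$, together with Lemmas~\ref{lemma_ball},~\ref{lemma_long_triv}, and~\ref{lemma_experiment_2}.  The decisive numerical ingredient is that every meridian disc of $N(K)$ meets $K'=\psi(P)$ in at least $\text{wrap\#}(P)\ge 2$ points.  For (1), let $S$ be a splitting $S^2$ of $K'$.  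Inessential circles of $S\cap T$ are removed by irreducibility; a compression disc for $T$ in $M=\Sigma\times I\setminus\mathring N(K)$ of longitudinal slope would trivialize $K$ by Lemma~\ref{lemma_long_triv} (contradicting non-classicality), and of meridional slope would produce a $2$-sphere meeting $K$ once.  Any innermost disc of $S$ lying in $N(K)$ is then a meridian disc, contributing $\ge\text{wrap\#}(P)\ge 2$ to $|S\cap K'|$; since $|S\cap K'|=2$, at most one such disc arises, and it can be isotoped off.  After reduction $S\cap T=\emptyset$, and since $|S\cap K'|\ne 0$ the ball bounded by $S$ lies inside $N(K)\cong S^1\times B^2$.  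Local triviality of $P$ then gives a trivial local knot for $K'$.

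For (2), suppose $A$ is an incompressible annular decomposition of $K'$.  The collar of $\partial A$ inside $A$ lies in $M$, so Lemma~\ref{lemma_experiment_2} applied with $F=T$ and $N=N(K)$ makes each outermost circle of $A\cap T$ on $A$ essential on $T$ and non-meridional.  After clearing inessential intersections by irreducibility, every component of $A\cap T$ is essential and non-meridional, so $A\cap N(K)$ contains no meridian-disc components.  Since $A$ is separating and $K$ is closed, $|K\cap A|$ is even, but each point of $K\cap A$ would contribute a meridian disc; hence $|K\cap A|=0$, and $K$ lies in one side of $A$, say $X_1$.  Then $K'\cap X_2$ is an arc $l_2$ contained in an annular piece $\alpha\subset A\cap N(K)$; pulling $(\alpha,l_2)$ back to $(S^1\times B^2,P)$ via $\psi$ and applying local triviality of $P$ inside the solid torus produces a disc in $N(K)$ whose boundary is essential on $A$, compressing $A$ and contradicting incompressibility.

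For (3), let $A_1\sqcup A_2$ be a special decomposition.  Each $A_i$ is incompressible by Lemma~\ref{lemma_special_incompress}, and $|A_i\cap K'|=1$ transversely implies $A_i$ is non-separating with algebraic intersection $|K'\cdot A_i|=1$.  Writing $[K']=w[K]$ in $H_1(\Sigma\times I)$, where $w$ is the winding number of $P$ in $S^1\times B^2$, this gives $|w|\cdot|K\cdot A_i|=1$, so $|w|=1$ and $|K\cap A_i|\ge 1$.  Near each point of $K\cap A_i$ the intersection $A_i\cap N(K)$ contains a meridian disc of $N(K)$, contributing at least $\text{wrap\#}(P)\ge 2$ to $|A_i\cap K'|$.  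This forces $|A_i\cap K'|\ge 2>1$, a contradiction.

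The most delicate step is the $|K\cap A|=0$ case of (2): here $K$ and $A$ are disjoint but $K'$ still crosses $A$ twice because the tube $N(K)$ has grown through $A$.  The wrap-number count alone is insufficient, and the argument must invoke the classification of incompressible annuli in a solid torus together with local triviality of $P$ to exhibit an explicit compression disc for $A$ in $\Sigma\times I$, contradicting the incompressibility hypothesis.
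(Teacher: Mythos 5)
Your part (1) is essentially the paper's argument (reduce $S\cap\partial N$, use boundary-irreducibility to kill longitudinal compressions, use the wrapping number to kill once-punctured meridian discs), though the phrase ``at most one such disc arises, and it can be isotoped off'' hides the actual contradiction: an innermost $\partial$-parallel circle bounds a disc containing exactly \emph{one} point of $S\cap K'$, while a meridian disc must contain at least two, so no such circle exists at all. Parts (2) and (3), however, have genuine gaps. In (2) you assert that after clearing inessential circles ``every component of $A\cap T$ is essential and non-meridional,'' but Lemma \ref{lemma_experiment_2} only controls the \emph{outermost} circles of $A\cap T$ (those cobounding an annulus with $\partial A$). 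The inner circles are exactly where the work is: circles bounding once-punctured discs in $A$ are forced to \emph{be} meridians (and are then killed by the wrapping number), and there can a priori be a nested family of concentric meridian circles each bounding a twice-punctured disc in $A$; the paper eliminates that family by a separate parity/separation argument that your proposal never confronts. Moreover, the case you do treat at length --- all circles core-parallel in $A$ and longitudinal on $T$ --- is not resolved by ``local triviality of $P$ inside the solid torus'': local triviality of $P$ is a statement about splitting $2$-spheres and gives you no mechanism for producing a compressing disc for $A$. The correct way to kill that case is the one the paper uses: an outermost core-parallel circle $C$ cobounds an annulus with $\partial A$ in the knot exterior, hence $\lk_\Sigma(K,C)=0$; by Lemma \ref{lemma_experiment_2} and Lemma \ref{lemma_long} it is a longitude of $K$, and gluing annuli exhibits $K$ as cobounding an annulus with a curve in $\partial(\Sigma\times I)$, so $K$ is trivial by Lemma \ref{lemma_unknot} --- contradicting nontriviality of $\upsilon$. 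Your own closing paragraph concedes this step is a plan rather than a proof.

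In (3) the homological opening is fine ($K'\cdot A_i=\pm1$ and $[K']=w[K]$ force $|w|=1$ and $K\cap A_i\neq\emptyset$), and it is a genuinely different and attractive starting point, but the decisive step --- ``near each point of $K\cap A_i$ the intersection $A_i\cap N(K)$ contains a meridian disc of $N(K)$'' --- is false as stated. The tube $N(K)$ is fixed by the satellite construction, so you cannot shrink it, and the components of $A_i\cap N(K)$ are a priori arbitrary properly embedded surfaces in the solid torus; to conclude that $K'$ meets $A_i$ at least $\mathrm{wrap}\#(P)\ge 2$ times you would need to normalize $A_i\cap\partial N$ by exactly the innermost-circle analysis you are trying to avoid (and then argue that a properly embedded surface generating $H_2(N,\partial N)$ meets $P$ at least $\mathrm{wrap}\#(P)$ times, which also requires proof). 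The paper instead reruns the circle-elimination scheme of part (2) verbatim on $A_1\sqcup A_2$ to conclude $A\cap\partial N=\emptyset$, which is impossible since $\partial N$ separates and $A\cap K'\neq\emptyset$ with $K'\subset N$. As written, parts (2) and (3) do not constitute a proof.
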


\begin{remark} Some paragraphs below are marked with stars ($\star \cdots \star$) for future reference. \end{remark}

\begin{proof}[Proof of Lemma \ref{lemma_lemma} (1)]  Let $N=N(K)$ be a regular neighborhood of $K$ defining the satellite. Let $S$ be a splitting-$S^2$ of $K' \subset \Sigma \times I$. Let $A$ be the annulus obtained by deleting small neighborhoods of $S \cap K'$ from $S$. Suppose $S$ is chosen so that $A \cap \partial N$ has the minimal number of connected components among all splitting-2-spheres $S'$ such that $(S',S'\cap K')$ is isotopic as a pair to $(S,S\cap K')$, with $K'$ fixed set-wise. We will show $S \subset \text{int}(N)$, so that local triviality of $P$ implies local triviality of $K'$.

($\star$) If $A \cap \partial N$ has a component that is inessential in $A$, choose $C$ to be an innermost one and let $D$ be a disc in $A$ with $C=\partial D$. Then either $D \subset \overline{\Sigma \times I \smallsetminus N}$ or $D \subset N$. We argue that in either case, $C$ must bound a disc in $\partial N$. Suppose by way of contradiction that $C$ is essential in $\partial N$, so that $[C]\ne 0$ in $H_1(\partial N)$. If $D \subset \overline{\Sigma \times I \smallsetminus N}$, then it follows that $\lk_{\Sigma}(K,C)=0$ and $C$ is a longitude of $K$. Lemma \ref{lemma_long_triv} then implies the contradiction that $\upsilon$ is the trivial knot. If $D \subset N$, then $C$ is a meridian of $N$. $D$ must then intersect $P$ and hence $K'$. This contradicts the fact that $D \subset A$ has no intersections with $K'$. Thus it follows in either case that $C$ bounds a disc $D' \subset \partial N$. Since $\upsilon$ is non-classical, $\Sigma \ne S^2$ and $\Sigma \times I$ is irreducible. The $2$-sphere $D \cup D'$ then bounds a $3$-ball $B \subset \Sigma \times I$. Since $K'$ is non-classical (by Theorem \ref{thm_sw}), $K'$ is not contained in $B$ and thus $K' \cap B=\emptyset$. Thus the component $C$ may be removed by an isotopy of $D$ through $B$ that fixes $K'$. As this contradicts the minimality of $S$, it follows that $A \cap \partial N$ contains no components that are inessential in $A$.

Thus, all components of $A \cap \partial N$ are $\partial$-parallel in $A$. Suppose $C \subset A \subset S$ is an innermost one, i.e. bounding a disc $D$ in $S$ containing one point only of $K' \cap S$. Since $\partial N$ is separating, $D \subset N$. Moreover, $C$ is a meridian of $N$. Indeed, if $C$ is inessential on $\partial N$, there would be a disc $D'\subset \partial N$ such that $D \cup D'$ is a $2$-sphere in $\Sigma \times I$ intersecting $K'$ in only one point. This is impossible since $\Sigma \ne S^2$ and $\Sigma \times I$ is irreducible. Then since $C$ is a meridian and the wrapping number of $P$ is at least 2, $D \cap K'$ cannot contain only one point. It follows that $A \cap \partial N=\emptyset$. Therefore, $S \subset \text{int}(N)$. Since $P$ is locally trivial, $S$ bounds a $3$-ball containing an unknotted arc of $K'$.
\end{proof}

\begin{figure}[htb]
\begin{tabular}{ccc} & & \\ & & \\ \begin{tabular}{c} \def\svgwidth{2.5in}
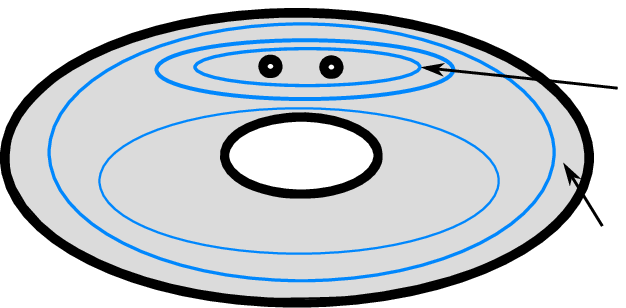 \end{tabular} & & \begin{tabular}{c} \def\svgwidth{2.5in}
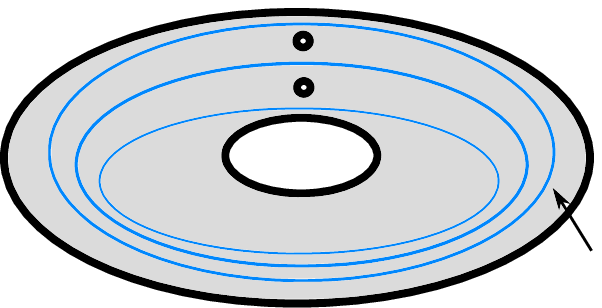 \end{tabular} \end{tabular}
\caption{The space $Y$ is the decomposing annulus $A$ with the intersections with $K'$ removed. The blue circles are schematic depictions of the components $A \cap \partial N$.} \label{fig_lemma_lemma_2}
\end{figure} 

\begin{proof}[Proof of Lemma \ref{lemma_lemma} (2)] By Theorem \ref{thm_sw}, the satellite knot $K'$ is a minimal representative. Suppose that there is an incompressible decomposing annulus $A \subset \Sigma \times I$ of $K'$. Let $Y$ be the annulus with two holes obtained by deleting from $A$ small neighborhoods of $A \cap K'$. Let $C \subset Y \cap \partial N$ be a component circle. As in the proof of Lemma \ref{lemma_lemma} (1), assume that $A$ is chosen to minimize the number of connected components of $A \cap \partial N$. 

If $C$ is inessential in $Y$, then it follows as in paragraph ($\star$) above that an innermost such $C$ bounds a disc on $\partial N$. As before, the existence of such $C$ contradicts either the minimality of $A$ or the fact that $\upsilon$ is non-classical. Assume then that there are no curves $C$ inessential in $Y$.

($\star\star$) We claim that there are also no components $C \subset Y \cap \partial N$ that are $\partial$-parallel in $Y$ but not in $A$. Such a $C$ bounds a disc $D$ in $A$ intersecting $K'$ in one point. Suppose that $C$ is innermost in $A$. We will show that this $C$ cannot exist by proving that it is a meridian of $N$. Since the wrapping number of $P$ is at least 2, $D$ must then intersect at least two points. To see this, first note that $D \subset N$. If $C$ bounds a disc $D' \subset \partial N$, then $D \cup D'$ is a $2$-sphere bounding a $3$-ball where $D\cup D'$ intersects $K'$ only once. As this contradicts the irreducibility of $\Sigma \times I$, $C$ must be essential in $\partial N$. Hence, $C$ is a meridian of $N$ and the claim is proved.

It follows that either all components of $A \cap \partial N$ are $\partial$-parallel in $A$ (as in Figure \ref{fig_lemma_lemma_2}, right) or there exists a component of $A \cap \partial N$ that is $\partial$-parallel in neither $A$ nor $Y$ (as in Figure \ref{fig_lemma_lemma_2}, left). 

($\star\star\star$) Next we show that in both cases in Figure \ref{fig_lemma_lemma_2}, there can be no components $C \subset A \cap \partial N$ that are $\partial$-parallel in $A$. Suppose that there is such a $C$. Choose $C$ to be outermost, so that $C$ and a component of $\partial A$ cobound an annulus $\alpha\subset A$ that contains no other components of $Y \cap \partial N$. Note that an outermost component must exist in both cases of Figure \ref{fig_lemma_lemma_2}. Furthermore, note that the annulus $\alpha$ for an outermost $C$ must satisfy $\alpha \subset \overline{\Sigma \times I \smallsetminus N}$ and $\alpha \cap K'=\emptyset$. This can be seen by observing that $\partial N$ is separating, $K' \subset N$, and $\partial A \subset \overline{\Sigma \times I \smallsetminus N}$. Now, the annulus $\alpha$ can be used to construct an ambient isotopy taking $C$ to a knot in $\overline{\Sigma \times I \smallsetminus N}$ that lies above $K$ in $\Sigma \times I$. It follows that $\lk_{\Sigma}(K,C)=0$. Since $A$ is incompressible and $K$ is minimal, Lemma \ref{lemma_experiment_2} implies that $C$ is essential on $\partial N$. Lemma \ref{lemma_long} then implies that $C$ is a longitude of $K$. Gluing the annulus that $K$ and $C$ cobound in $N$ to $\alpha \subset \overline{\Sigma \times I \smallsetminus N}$, we see that $K$ cobounds an annulus with a simple closed curve in $\partial (\Sigma \times I)$. By Lemma \ref{lemma_unknot}, $K$ is a representative in $\Sigma \times I$ of the unknot. As this contradicts the hypothesis on $K$, there are no components $C \subset A \cap \partial N$ that are $\partial$-parallel in $A$.

Thus, if there is an incompressible decomposing annulus $A$ of $K'$, all components $C \subset A \cap \partial N$ must be $\partial$-parallel in neither $A$ nor $Y$. Such curves form a set of concentric inessential circles $\{C_0, C_1,\ldots,C_n\}$ in $A$, with $C_0$ innermost in $A$. We will show that this set is in fact empty. First we claim that if the set is nonempty, then all $C_i$ are meridians of $N$. If $C_i$ is inessential on $\partial N$, assume that $C_i$ is innermost in $\partial N$. Then $C_i$ bounds a disc $D\subset \partial N$ that contains no $C_j$ for $j \ne i$. Each $C_i$ bounds a disc $D_i \subset A$ intersecting $K'$ in two points. It follows that $D \cup D_i$ is a splitting-$S^2$ of $K'$. Since $K'$ is a minimal representative, Lemma \ref{lemma_tech_compress} implies that $A$ is compressible, which is a contradiction. It may therefore be assumed that $C_0,\ldots,C_n$ are essential on $\partial N$. To prove that $C_i$ is a meridian, it need only be shown $C_i$ bounds a disc in $N$. Certainly $D_0 \subset N$,  since $\partial N$ is separating, $K' \cap A \subset D_0$, and $K' \subset N$. Next recall that $C_i \cap C_0=\emptyset$ for all $i \ne 0$ and each $C_i$ is essential in $\partial N$. Since $C_0$ is a meridian of $N$, this implies that each $C_i$ is parallel to $C_0$ in $\partial N$. Therefore, $\{C_0,\ldots,C_n\}$ contains only meridians of $N$.

Finally, we show that $\{C_0,\ldots,C_n\}$ is empty. Since $\partial N$ is separating, there must be at least one component of $A \smallsetminus \partial N$ in $\text{int}(N)$ and one component the complement of $N$. It follows that the set $\{C_0,\ldots,C_n\}$ of concentric circles on $A$ has an odd number of elements. As each $C_i$ is a meridian of $N$, they divide $\partial N$ into an odd number of annuli. Enumerate the distinct annuli in $\partial N$ consecutively by $\nu_1,\ldots,\nu_{n+1}$, so that $\partial \nu_i=C_{i-1}\cup C_i$ when the indices are taken modulo $n+1$. Since $A$ is separates $\Sigma \times I$ into two components, it follows that $n \ge 1$. Furthermore, the interiors $\nu_i$ and $\nu_{i+1}$ cannot be in the same component of $\Sigma \times I \smallsetminus A$. As the number of annuli $\nu_i$ is odd, it must be that the interiors of $\nu_1$ and $\nu_{n+1}$ lie in the same component of $\Sigma \times I \smallsetminus A$. However, $\nu_1 \cap \nu_{n+1}=C_0$ and hence their interiors must lie in different components of $\Sigma \times I \smallsetminus A$. This contradiction implies $\{C_0,\ldots,C_n\}$ is empty. 

Thus, $A \cap \partial N=\emptyset$ for any incompressible decomposition annulus of $K'$. Since $\partial N$ is separating, $K' \subset N$ and $K'\cap A \ne \emptyset$, this is impossible. Thus, $K'$ admits no incompressible annular decompositions and the proof of Lemma \ref{lemma_lemma} (2) is complete.
\end{proof}

\begin{proof}[Proof of Lemma \ref{lemma_lemma} (3)] Suppose that $T=A_1 \sqcup A_2$ defines a special decomposition, where $A_1,A_2$ are vertical annuli intersecting $K'$ once each. By Lemma \ref{lemma_special_incompress}, $A_1$ and $A_2$ are incompressible in $\Sigma \times I$. Let $Y,Y_1,Y_2$ be $T,A_1,A_2$, respectively, with a small neighborhood of $T\cap K'$ removed. Then $Y=Y_1\sqcup Y_2$. Assume that $T$ has been chosen as in the proofs of Lemmas \ref{lemma_lemma} (1) and (2) so as to minimize the number of connected components of $T \cap \partial N$. 

Since $A_1,A_2$ are incompressible in $\Sigma \times I$, the proof mimics that of Lemma \ref{lemma_lemma} (2). The only difference is that $A_1,A_2$ each intersect $K'$ but once. Components $C\subset T \cap \partial N$ that are inessential in $Y$ may be removed by an isotopy, as described in paragraph ($\star$). By an argument identical to that of paragraph $(\star\star)$, there can be no components $C \subset Y \cap \partial N$ that are $\partial$-parallel in $Y$ but not in $T$. By an argument identical to that of paragraph $(\star\star\star)$, there can be no components $C \subset Y \cap \partial N$ that are $\partial$-parallel in $T$. Thus, $A\cap \partial N=\emptyset$. Since $\partial N$ is separating and $A \cap N \ne \emptyset$, this is impossible. Hence, $K'$ admits no special decompositions.  \end{proof}




\subsection{Proof of Theorem \ref{thm_A}} \label{sec_proof_1} First note that the unknot is a slice satellite of itself, so we may assume that $\upsilon$ is non-trivial. Recall the argument for a non-trivial classical knot $K \subset S^3$ given by Livingston \cite{livingston}. Let $P \subset S^1 \times B^2$ denote the knot depicted on the far left in Figure \ref{fig_sat conc_movie} and let $K'$ denote a satellite with pattern $P$. The movie in Figure \ref{fig_sat conc_movie} shows a concordance in $S^1 \times B^2$ to the knot $S^1 \times 0$ ($0 \in B^2 \subset \mathbb{C}$). This proves that $K'$ and $K$ are concordant. The knot $P$ as depicted can be considered as a knot in an unknotted solid torus in $S^3$. In $S^3$, $P$ is unknotted and by Livingston \cite{livingston}, Proposition 5.1, it follows that $P$ is nontrivial in $S^1 \times B^2$, locally trivial in $S^1 \times B^2$, and has wrapping number greater than $1$. Then Theorem 4.2 of \cite{livingston}, which is the classical version of our Lemma \ref{lemma_conc_to_sat}, proves that $K'$ is locally trivial (i.e. prime) in $S^3$.

Now suppose that $\upsilon$ is a virtual knot and let $K \subset \Sigma \times I$ be a minimal representative. Let $K'$ be a satellite of $K$ with pattern $P$. Then $K$ and $K'$ are concordant exactly as in the case of knots in $S^3$. If $\upsilon$ is classical, $K'\subset S^2 \times I$. By the above remarks, $K'$ is locally trivial in $S^3$ and Theorem \ref{thm_class_virt_prime} implies that $K'$ represents a prime virtual knot. If $\upsilon$ is a non-classical virtual knot, Theorem \ref{lemma_conc_to_sat} implies $K'$ is a prime virtual knot. By Theorem \ref{thm_sw}, $\upsilon'$ has the same virtual genus as $\upsilon$. Therefore, every non-classical virtual knot is concordant to a prime satellite virtual knot having the same virtual genus. \hfill $\square$

\begin{figure}[htb]
\begin{tabular}{c} \def\svgwidth{5.5in}
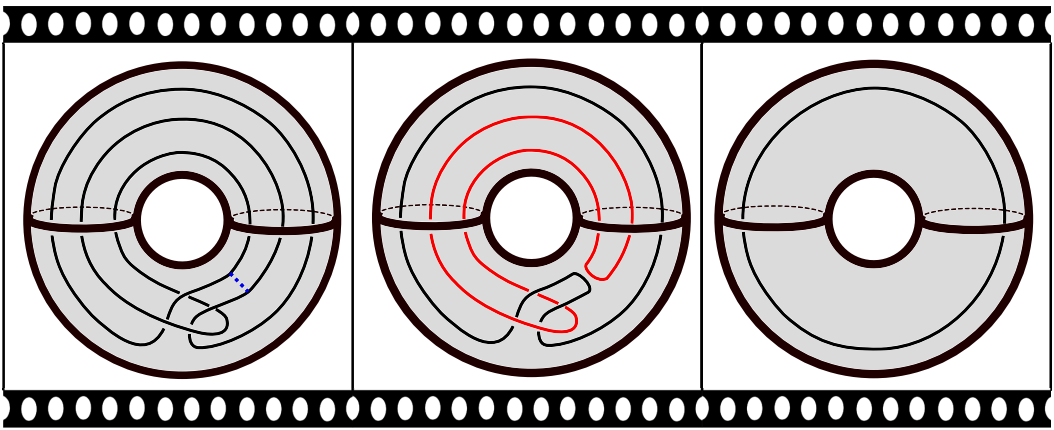 \end{tabular}
\caption{Concordance used in the proof of Theorem \ref{thm_A}.} \label{fig_sat conc_movie}
\end{figure} 

\section{Tangles, Complementary Tangles, and Hyberbolic Knots in $\Sigma \times I$} \label{sec_hyp}

The aim of this section is to prove Theorem \ref{thm_B}, that every virtual knot is concordant to a hyperbolic prime virtual knot. The main tool is a generalization of tangles in $3$-balls to \emph{complementary tangles} in $\Sigma \times I$. Hyperbolic knots in $\Sigma \times I$ will be constructed by joining together tangles in $B^3$ and complementary tangles in $\Sigma \times I$. Section \ref{sec_hyp_defn} reviews hyperbolic knots in $3$-manifolds and defines hyperbolic virtual knots. Section \ref{sec_tangles} reviews tangles in $B^3$. In Section \ref{sec_complementary}, we define complementary tangles and study their elementary properties. Section \ref{sec_tangle_ops} investigates joins of tangles and complementary tangles. The proof of Theorem \ref{thm_B} appears in Section \ref{sec_proof_2}.

\subsection{Hyperbolic knots and virtual knots} \label{sec_hyp_defn} Recall that a $3$-manifold is said to be \emph{Haken} if it is compact, orientable, irreducible, boundary irreducible, and sufficiently large (see e.g. Matveev \cite{matveev_book}). A $3$-manifold $M$ is said to be \emph{anannular} if every properly embedded incompressible annulus is $\partial$-parallel and \emph{atoriodal} if every incompressible torus in $M$ is $\partial$-parallel. A $3$-manifold is said to be \emph{simple} if it is compact, orientable, irreducible, boundary irreducible, atoroidal, and anannular. By Thurston's Hyperbolization Theorem \cite{thurston} and the Torus Theorem (Feustel \cite{feustel}, Theorem 4), a $3$-manifold that is both simple and Haken has hyperbolic interior.

A knot $K$ in a compact orientable $3$-manifold $M$ is said to be \emph{hyperbolic} if the interior of its exterior in $M$ is a hyperbolic $3$-manifold. If the exterior of $K$ is simple and Haken, then the volume will be finite if and only if $\partial M$ is a disjoint union of tori \cite{thurston}. If $\partial M$ contains no $2$-spheres and $M'=M\smallsetminus \{\text{torus boundary components}\}$ admits a hyperbolic metric that is totally geodesic on $\partial M'$, then a finite volume hyperbolic structure can be constructed by ``doubling'' $M'$ along $\partial M'$ (see e.g. Marden \cite{marden}, pp. 377-378).   

Here we are interested only in knots $K \subset\Sigma \times I$. If $\Sigma=S^1 \times S^1$, a knot with simple and Haken exterior will be hyperbolic of finite volume. If $\Sigma \ne S^1 \times S^1$ or $S^2$, a knot with simple and Haken exterior is hyperbolic and the metric is totally geodesic on $\partial(\Sigma \times I \smallsetminus K)$ (Morgan \cite{morgan}, Theorem B$\,'$). Below we will prove hyperbolicity of knots in $\Sigma \times I$ by showing they have simple and Haken exterior. By the remarks of the preceding paragraph, such knots are of finite volume. For more on volumes of hyperbolic knots in thickened surfaces, the reader is referred to Adams et al. \cite{adams_2}.

We are now ready to make the following definition of a hyperbolic virtual knot. Observe that the case of classical knots is treated separately.   

\begin{definition}[Hyperbolic virtual knot] \label{defn_hyp} A virtual knot $\upsilon$ is said to be \emph{hyperbolic} if $\upsilon$ can be represented by some hyperbolic knot $K \subset \Sigma \times I$. If $K\subset S^2 \times I$ (i.e. $\upsilon$ is classical) we will take this to  mean that $K$ is hyperbolic in the copy of $S^3$ obtained by capping off $S^2 \times I$ with two $3$-balls attached along their boundaries to $\partial (S^2 \times I)$.
\end{definition}

An immediate consequence of the definition is that a classical knot is hyperbolic in $S^3$ if and only if it is a hyperbolic virtual knot. Indeed, if a non-hyperbolic classical knot has a hyperbolic representative $K \subset \Sigma \times I$, $\Sigma \ne S^2$, then any genus reducing destabilzation of $K$ serves as a incompressible annulus in the exterior of $K$ that is not $\partial$-parallel. As hyperbolic $K \subset \Sigma \times I$ do not admit such annuli, a classical knot cannot have a non-minimal hyperbolic representative. Other examples of hyperbolic virtual knots come from the recent work of Adams et al. \cite{adams} on alternating knots in thickened surfaces. Recall that a knot diagram on a surface is said to be alternating if the crossings alternate between over and over while traversing the diagram. A diagram is said to be \emph{fully alternating} if, in addition, the complement of its underlying immersed curve consists only of discs. By Adams et al. \cite{adams}, Theorem 1, every locally trivial fully alternating knot in $\Sigma \times I$, $\Sigma \ne S^2$, is hyperbolic in $\Sigma \times I$ (see \cite{adams}, Theorem 1). For virtual knots, we have the following application.

\begin{proposition} \label{prop_adams} Suppose $\upsilon$ is an alternating diagram of a non-classical virtual knot. Let $\Sigma$ be the Carter surface of $\upsilon$ and $K \subset \Sigma \times I$ the knot corresponding to the diagram $\upsilon$. If $K$ is locally trivial in $\Sigma \times I$, then $\upsilon$ is hyperbolic.
\end{proposition}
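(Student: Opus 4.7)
The plan is to reduce the proposition to a direct application of Adams et al.\ \cite{adams}, Theorem 1, which asserts that every locally trivial fully alternating knot in $\Sigma \times I$ with $\Sigma \ne S^2$ is hyperbolic in $\Sigma \times I$. The three hypotheses to verify for $K \subset \Sigma \times I$ are: (i) $K$ is locally trivial, (ii) $\Sigma \ne S^2$, and (iii) $K$ is fully alternating on $\Sigma$. Then we invoke Definition \ref{defn_hyp} to conclude that the virtual knot $\upsilon$ is hyperbolic.

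Hypothesis (i) holds by assumption. For hypothesis (ii), note that $\Sigma = S^2$ would put $K$ in $S^2 \times I$, which by capping off with two $3$-balls realizes $\upsilon$ as a classical knot in $S^3$; this contradicts the non-classicality of $\upsilon$. Hence $\Sigma$ has genus at least $1$.

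The key step is to verify hypothesis (iii), and this is built into the Carter surface algorithm. By construction, $\Sigma$ is obtained by attaching $0$-handles at each classical crossing of $\upsilon$, $1$-handles along the arcs of $\upsilon$ (with virtual crossings corresponding to $1$-handles crossing over one another), and then capping off the resulting boundary components with $2$-handles. The diagram of $K$ on $\Sigma$ therefore consists exactly of the crossings and the arcs of the original virtual diagram $\upsilon$, with over/under information preserved at each classical crossing. In particular, since $\upsilon$ is alternating as a virtual knot diagram, the induced diagram of $K$ on $\Sigma$ is alternating. Moreover, the complement of the underlying immersed curve of $K$ on $\Sigma$ is precisely the union of the interiors of the attached $2$-handles, which is a disjoint union of open discs. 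Thus the diagram of $K$ on $\Sigma$ is fully alternating.

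With (i), (ii), and (iii) verified, Adams et al.\ \cite{adams}, Theorem 1 yields that $K \subset \Sigma \times I$ is hyperbolic. By Definition \ref{defn_hyp}, $\upsilon$ is therefore a hyperbolic virtual knot. The only subtlety in the argument is ensuring that the alternating and full conditions both transfer cleanly from the planar virtual diagram to the diagram on the Carter surface, but both are essentially immediate from the handlebody description: alternation is a local property at each classical crossing that is unaffected by the handle decomposition, and fullness is exactly what the $2$-handle attachments guarantee.
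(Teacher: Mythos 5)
Your proof is correct and follows essentially the same route as the paper: both arguments observe that non-classicality forces $\Sigma \ne S^2$, that the $2$-handles of the Carter surface construction make the diagram fully alternating, and then apply Adams et al., Theorem 1 together with the definition of a hyperbolic virtual knot. Your write-up is slightly more explicit about why the alternating condition transfers from the planar virtual diagram to the surface diagram, but the substance is identical.
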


\begin{proof} Since $\upsilon$ is non-classical, $\Sigma \ne S^2$. The Carter surface is obtained from $\upsilon$ by a handle decomposition: 0-handles at the crossings, 1-handles along the arcs of the diagram, and 2-handles along the boundary components (see Section \ref{sec_review_virtual}). Thus, the complement of the immersed curve underlying $\upsilon$ on $\Sigma$ consists only of discs (in particular, the 2-handles). It follows that $K$ is fully alternating. Since $K$ is also locally trivial, it is hyperbolic by \cite{adams}, Theorem 1.
\end{proof}

\begin{figure}[htb]
\begin{tabular}{c} \def\svgwidth{2in}
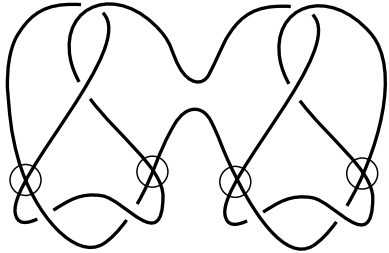 \end{tabular}
\caption{A hyperbolic virtual knot (4.105$\#$4.105) that is not a prime virtual knot.} \label{fig_hyp_not_prime}
\end{figure} 

As is well-known, every hyperbolic knot in $S^3$ is locally trivial and thus is prime as a classical knot. By Theorem \ref{thm_class_virt_prime}, a virtual knot that is equivalent to a hyperbolic classical knot is a prime virtual knot. Not every hyperbolic virtual knot, however, is prime.

\begin{example}\label{example_hyp_not_prime} Consider the prime  alternating virtual knot $\upsilon$=4.105. Let $\upsilon \#\upsilon$ be the connected sum depicted in Figure \ref{fig_hyp_not_prime}. Let $K\subset \Sigma \times I$ be a representative of $\upsilon$ on its Carter surface and let $K\#K \subset (\Sigma \#\Sigma) \times I$ represent $\upsilon\#\upsilon$. Since prime decompositions of virtual knots have unique prime summands \cite{matveev_roots} and $K\#K$ has no non-trivial classical summands, $K\#K$ contains no non-trivial local knots. It follows that $K\# K$ is locally trivial in $(\Sigma \#\Sigma) \times I$. Note also that the local triviality of $K\# K$ can be proved using Adams et al. \cite{adams}, Theorem 2. Thus $\upsilon \# \upsilon$ is hyperbolic but not prime. 
\end{example}

As in the case of classical knots, every virtual knot is concordant to a hyperbolic virtual knot. This follows from the main theorem of Myers \cite{myers} (Theorem 7.1), as we will now describe. Let $M$ be a compact oriented $3$-manifold whose boundary contains no $2$-spheres. In \cite{myers}, knots $K_0,K_1\subset M$ are said to be \emph{concordant in $M$} if they cobound a properly and smoothly embedded annulus in $M \times I$, where $K_i \subset M \times \{i\}$ for $i=0,1$. Myers proved that every knot in $M$ is concordant in $M$ to a hyperbolic knot. Applying this to the case of knots in $\Sigma \times I$, $\Sigma \ne S^2$, it follows that every knot in $\Sigma \times I$ is concordant in $\Sigma \times I$ to a hyperbolic knot. Now, if two knots in $\Sigma \times I$ are concordant in $M=\Sigma \times I$, then they represent concordant virtual knots\footnote{The converse of this statement is false; see e.g. \cite{bbc}.}. Indeed, just set $W=\Sigma\times I$ in the definition of concordance for knots in thickened surfaces (see Section \ref{sec_motivate}). Thus, every non-classical virtual knot is concordant to a hyperbolic virtual knot. Furthermore, Myers' Theorem 7.1 applied to the case of $M=S^3$ implies that every classical knot is concordant to a hyperbolic knot.

Putting this all together, it follows that every virtual knot is concordant to a hyperbolic virtual knot, but not all hyperbolic virtual knots are prime. Theorem \ref{thm_B} shows that every virtual concordance class contains a virtual knot that is simultaneously prime and hyperbolic. An example of such a concordance is given in Figure \ref{fig_conc_example}. It is a concordance from a composite knot $4.99\# 4.105$ to the prime knot $4.105$. The virtual knot 4.105 has virtual genus one, so by Theorem \ref{thm_prime_implies_loc_triv}, any genus one representative is locally trivial. It is easy to see that the Carter surface of the given diagram of 4.105 has genus one. Proposition \ref{prop_adams} then implies that $4.105$ is both hyperbolic and prime.

\subsection{Tangles in $3$-balls} \label{sec_tangles} Here we review tangles in $3$-balls and their geometric properties. First recall that a \emph{$3$-manifold pair} $[M,F]$ is a $3$-manifold $M$ and a surface $F \subset \partial M$. A pair $[M,F]$ is said to be \emph{irreducible} if $M$ is itself irreducible and $F$ is incompressible in $M$.

A ($2$-string) \emph{tangle} (see Lickorish \cite{lick}) is a pair $(B,t)$ where $B$ is a $3$-ball and $t=t_1 \sqcup t_2$ is a pair of disjoint arcs embedded in $B$ such that $t \cap \partial B=\partial t$. Tangles are considered equivalent up to homeomorphism of pairs $(B,t) \to (B',t')$. The (2-string) \emph{untangle} is the tangle $(B^2 \times I,\{a,b\} \times I)$, where $a,b \in \mathring{B}^2$, $a \ne b$. See Figure \ref{fig_tangles} (left and center). Now, let $N(t_1)$, $N(t_2)$ be disjoint regular neighborhoods of $t_1$, $t_2$ that intersect $\partial B$ in four discs $D_1, D_2, D_3, D_4$. Let $E(t)=\overline{B \smallsetminus (N(t_1) \cup N(t_2))}$ and $F(t)=\partial B \smallsetminus (\mathring{D}_1 \cup \mathring{D}_2 \cup \mathring{D}_3 \cup \mathring{D}_4)$. We will call $E(t)$ the \emph{exterior} of $(B,t)$ and the $3$-manifold pair $[E(t),F(t)]$ the \emph{exterior pair} of $(B,t)$.
\newline

\begin{figure}[htb]
\begin{tabular}{ccc} \begin{tabular}{c} \def\svgwidth{1.5in}
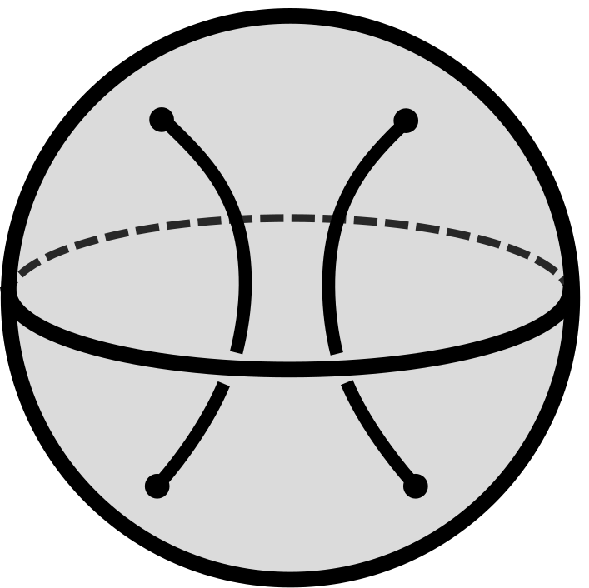 \\ an untangle \end{tabular} & 
\begin{tabular}{c} \def\svgwidth{1.5in}
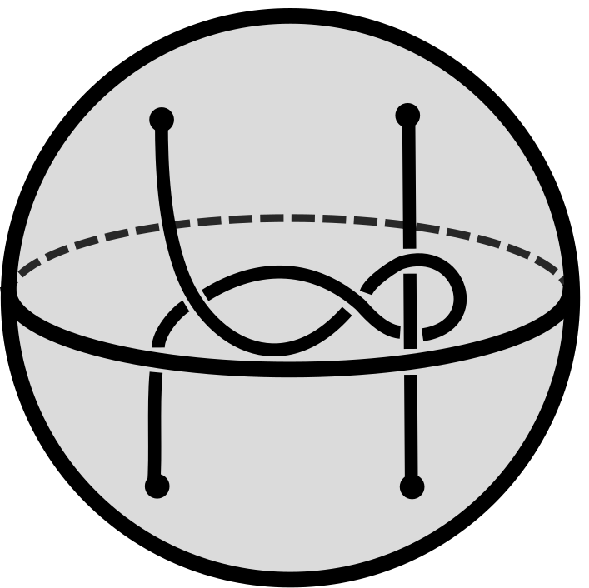 \\ an untangle \end{tabular} 
& \begin{tabular}{c} \def\svgwidth{1.5in}
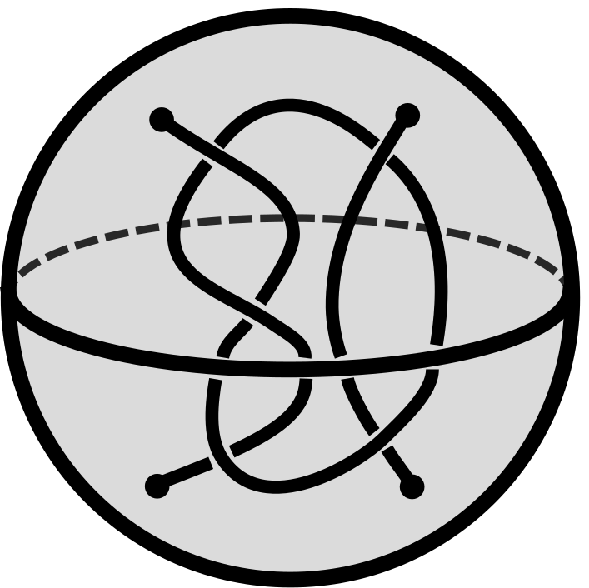 \\ a prime tangle \end{tabular}
\end{tabular}
\caption{Some (2-string) tangles in $3$-balls.} \label{fig_tangles}
\end{figure}

A tangle $(B,t)$ is said to be \emph{prime} if (1) both arcs of $t$ are locally trivial in $B$ and (2) $(B,t)$ is not equivalent to the untangle (see \cite{lick}). It is often convenient to replace condition (2) with: $(2')$ no disc properly embedded in $B$ and disjoint from $t$ can separate the arcs of $t$. If a tangle is prime, then its exterior pair is irreducible.

In \cite{lick}, Theorem 1, Lickorish proved that if a (one or two component) link $L \subset S^3$ can be decomposed into a union of two prime tangles $(A,s)$, $(B,t)$, where $A \cup B=S^3$, $A \cap B$ is a $2$-sphere intersecting $L$ transversely in the four points $s \cap t$, and $L=s \cup t$, then $L$ is a prime link (i.e. locally trivial in $S^3$).  

A tangle $(B,t)$ is said to be \emph{atoroidal} (\emph{anannular}) if $E(t)$ is atoroidal (respectively, anannular). In particular, an atoroidal 2-string tangle admits no incompressible tori, since $\partial E(t)$ is a two-holed torus. In \cite{soma}, Theorem 1, Soma proved that a one or two component link decomposed as above into two prime and atoroidal tangles is non-split, atoroidal, and prime in $S^3$. 

\begin{remark} A remark on terminology is in order. In \cite{kanenobu,myers_0,myers}, an ``atoroidal'' tangle is defined as a tangle whose exterior is a simple $3$-manifold, and hence is both atoroidal and anannular in our sense. In \cite{kanenobu}, a ``simple'' tangle is defined as a prime tangle that contains no incompressible torus. There are examples of ``simple'' tangles that are not ``atoroidal''.  Our terminology has been chosen so that the word ``simple'' is not multiply defined.
\end{remark}

Myers' method of constructing hyperbolic knots in compact $3$-manifolds generalizes the tangle method for knots in $S^3$. This strategy will be employed in the proof of Theorem \ref{thm_B} to construct knots in $\Sigma \times I$ with simple and Haken exterior. We take a moment to review it here as it will be used in the proof of Theorem \ref{thm_B}. Suppose that $M$ is a $3$-manifold obtained from the pairs $[M_0,F]$, $[M_1,F]$ by gluing along $F$, so that $M=M_0 \cup M_1$ and $F=M_0 \cap M_1=\partial M_0 \cap \partial M_1$. To identify sufficient conditions on $[M_i,F]$ so that $M$ is simple and Haken, we recall the following properties:

\begin{itemize}
\item\underline{Property A:} $[M,F]$ and $[M,\overline{\partial M \smallsetminus F}]$ are irreducible $3$-manifold pairs, no component of $F$ is a disc or $S^2$, and any disc $D$ properly embedded in $M$ such that $D \cap F$ is an arc is necessarily $\partial$-parallel.

\item\underline{Property B$\,'$:} $[M,F]$ has Property A, no component of $F$ is an annulus or torus, every incompressible annulus $A$ in $M$ with $\partial A \cap \partial F=\emptyset$ is $\partial$-parallel, and every incompressible torus in $M$ is $\partial$-parallel.

\item\underline{Property C$\,'$:} $[M,F]$ has Property B$\,'$, and every disc $D$ in $M$ such that $D \cap F$ is a pair of disjoint arcs is necessarily $\partial$-parallel.
\end{itemize}

The following lemma gives sufficient conditions on $[M_0,F]$, $[M_1,F]$ so that $M$ is a simple Haken $3$-manifold.

\begin{lemma}[Myers \cite{myers}, Lemma 2.5] \label{lemma_myers_2p5} If $[M_0,F]$ has property B$\,'$ and $[M_1,F]$ has property C$\,'$, then $M$ is simple and Haken. In particular, if $M_0,M_1$ are simple and Haken and $F$, $\overline{\partial M_0 \smallsetminus F}$ and $\overline{\partial M_1 \smallsetminus F}$ are incompressible, and no component of $F$ is a disc, $2$-sphere, annulus, or torus, then $M$ is simple and Haken.
\end{lemma}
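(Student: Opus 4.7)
The plan is to use standard innermost-disc and outermost-arc arguments on the separating surface $F$ to push any test surface (a sphere, disc, annulus, or torus) embedded in $M$ into one of the pieces $M_0$ or $M_1$, and then invoke the hypotheses on that piece. Since properties B$\,'$ and C$\,'$ both include Property A, both pairs $[M_i,F]$ are irreducible, so $F$ is incompressible in each $M_i$; this is the engine for reducing circle intersections by innermost-disc surgeries on the test surface.

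First I would verify that $M$ is irreducible. Take a $2$-sphere $S \subset M$ in general position with $F$. An innermost disc of $S \smallsetminus F$ lies in some $M_i$, and incompressibility of $F$ in $M_i$ lets me isotope it across $F$, strictly reducing $|S \cap F|$. When $S \cap F = \emptyset$, $S$ lies entirely in $M_0$ or $M_1$ and bounds a $3$-ball there by irreducibility. Next, for boundary-irreducibility of $M$, I take a compression disc $D$ with $\partial D \subset \partial M$, put it transverse to $F$, and remove circle intersections as above; the remaining arcs of $D \cap F$ are pushed off by an outermost-arc argument that uses the clause in Property A stating that a disc in $M_i$ meeting $F$ in a single arc is $\partial$-parallel. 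After all reductions $D \subset M_i$ for some $i$, and Property A gives the usual contradiction.

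For atoroidality and anannularity I iterate the same program with tori and annuli in place of spheres and discs. An incompressible torus $T \subset M$ made transverse to $F$ has inessential intersection circles removed by incompressibility of $F$; the remaining components of $T \cap F$ are essential in $T$ and hence mutually parallel, so some outermost piece of $T$ cut along $F$ is an incompressible annulus properly embedded in some $M_i$ with boundary on $F$. The clauses of B$\,'$ for $M_0$ and C$\,'$ for $M_1$ force such annuli to be $\partial$-parallel, so it can be pushed across $F$, again reducing intersections. Eventually $T \subset M_i$, and Property B$\,'$ gives $\partial$-parallelism in $M_i$, hence in $M$. The case of an incompressible annulus $A \subset M$ with $\partial A \subset \partial M$ is treated analogously, except that after outermost-arc surgeries on $A \cap F$ one must rule out discs in $M_i$ that meet $F$ in \emph{two} arcs; this is precisely the extra clause in Property C$\,'$ beyond B$\,'$, and it must be applied on the $M_1$ side. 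This asymmetry is where the hypotheses on $M_0$ and $M_1$ differ, and identifying it is the main obstacle in the bookkeeping.

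Finally, to show $M$ is Haken it suffices to exhibit a two-sided closed incompressible surface: the surface $F$ itself works, since it is incompressible in each $M_i$ by Property A and the innermost-disc argument above propagates incompressibility to $M$. No component of $F$ is a disc or $S^2$ by Property A, and under the ``in particular'' hypotheses no component is an annulus or torus either, so $F$ is a non-trivial incompressible surface and $M$ is sufficiently large. The ``in particular'' clause then follows immediately, because the stated hypotheses (simple, Haken, $F$, $\overline{\partial M_0 \smallsetminus F}$, $\overline{\partial M_1 \smallsetminus F}$ all incompressible, no disc/sphere/annulus/torus component of $F$) imply both Property B$\,'$ for $[M_0,F]$ and Property C$\,'$ for $[M_1,F]$.
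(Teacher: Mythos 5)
This lemma is imported verbatim from Myers (his Lemma 2.5); the paper offers no proof of its own, and your sketch reproduces the standard cut-along-$F$ argument that Myers uses, including the key structural point that the two-arc disc clause of Property C$\,'$ is exactly what is needed on the $M_1$ side when killing essential arcs of (annulus)$\,\cap F$. Two soft spots are worth flagging. First, you write that $F$ is a \emph{closed} two-sided incompressible surface witnessing that $M$ is sufficiently large; in every application in this paper $F$ is a four-holed sphere, so it is properly embedded with nonempty boundary, not closed. The conclusion survives, since a properly embedded two-sided incompressible, non-boundary-parallel surface suffices for Haken-ness (and a compact orientable irreducible $\partial$-irreducible manifold with nonempty non-spherical boundary is sufficiently large in any case), but the witness should be described correctly. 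Second, the ``in particular'' clause does not follow \emph{immediately}: deducing Property C$\,'$ for $[M_1,F]$ from simplicity plus the incompressibility hypotheses requires showing that a disc meeting $F$ in two disjoint arcs is $\partial$-parallel, which is not a formal consequence of anannularity --- one must convert such a disc into a properly embedded annulus (via a banding/regular-neighborhood trick) before anannularity and the incompressibility of $F$ and $\overline{\partial M_1\smallsetminus F}$ can be applied. That deduction is itself a separate lemma in Myers' paper, and your proposal should either carry it out or cite it rather than assert it as automatic.
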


Examples of manifold pairs having property C$\,'$ are given in the the next lemma.

\begin{lemma}[Kanenobu \cite{kanenobu}, Theorem 2] \label{lemma_kanenobu} The exterior pair of a prime atoroidal (2-string) tangle has property C$\,'$.
\end{lemma}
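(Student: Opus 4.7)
The plan is to verify the three nested properties A, B$\,'$, and C$\,'$ for the exterior pair $[E(t),F(t)]$ in succession, using that $(B,t)$ is prime and atoroidal. Recall that $\partial E(t) = F(t) \cup A_1 \cup A_2$, where $A_i = \partial N(t_i) \cap E(t)$ is an annulus and $F(t)$ is a four-holed sphere; the boundary circles of $F(t)$ and of the $A_i$ are exactly $\partial D_1,\ldots,\partial D_4$.

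For Property A, I would first establish irreducibility of $E(t)$: a $2$-sphere $S\subset E(t)$ bounds a $3$-ball in $B$, and since $\partial t \subset \partial B$ while $S$ is disjoint from $\partial B$, this ball cannot meet either arc, hence lies in $E(t)$. I would then prove incompressibility of $F(t)$ by observing that an essential simple closed curve on a four-holed sphere either separates $\{\partial D_1,\ldots,\partial D_4\}$ into two pairs of two or is parallel to some $\partial D_i$. A compressing disc of the first kind, capped off by the corresponding disc in $\partial B$, yields a disc in $B$ disjoint from $t$ that separates the two arcs, contradicting condition $(2')$ of primeness. A compressing disc of the second kind, capped off near $D_i$, produces a splitting-$S^2$ for the arc $t_i$; local triviality of $t_i$ together with $(2')$ rules this out. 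The annuli $A_i$ are handled by the same method, and $F(t)$ is clearly neither a disc nor a sphere. The arc-disc clause of Property A is obtained by isotoping a disc $D$ with $D\cap F(t)$ a single arc across the adjacent annulus $A_j$, reducing to the cases just treated.

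For Property B$\,'$, the components of $F(t)$ are four-holed spheres, so never annuli or tori, and atoroidality of $(B,t)$ directly yields that every incompressible torus in $E(t)$ is $\partial$-parallel. The remaining condition concerns an incompressible annulus $\mathcal{A} \subset E(t)$ with $\partial \mathcal{A}$ disjoint from $\partial F(t)$. A standard cut-and-paste using the incompressibility of $F(t)$ and of the $A_i$ allows each boundary circle of $\mathcal{A}$ to be positioned on $F(t)$ or on a single $A_j$ in a controlled way, and any non-$\partial$-parallel configuration can be combined with the meridional discs of the $N(t_i)$ to produce either a disc in $B$ disjoint from $t$ that separates the arcs or an essential torus in $E(t)$, each contradicting the hypotheses on $(B,t)$.

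Property C$\,'$ is the heart of the argument and the principal obstacle. Given a properly embedded disc $D \subset E(t)$ with $D \cap F(t)$ a pair of disjoint arcs, $\partial D$ decomposes into two arcs on $F(t)$ alternating with two arcs on $A_1 \cup A_2$. I would proceed by case analysis on which of $A_1,A_2$ carries each of the latter arcs and on whether each such arc joins the two boundary components of its annulus or is $\partial$-parallel within it. In each case, the strategy is to cap $D$ off by sub-discs of the $A_i$ and by meridional discs of the $N(t_j)$ to obtain a disc $D'$ in $B$ that is either disjoint from $t$ or meets $t$ in a controlled collection of sub-arcs. Condition $(2')$ of primeness, applied to the resulting separating disc, forces $D'$ to be $\partial$-parallel in $B$, which in turn shows that $D$ is $\partial$-parallel in $E(t)$. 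The delicate point — and the main technical obstacle — is ensuring that this capping procedure does not itself manufacture a disc in $B$ that nontrivially separates $t_1$ from $t_2$; this is where the local triviality of each individual arc of $t$ must be invoked, eliminating the ``knotted sub-arc'' configurations that can otherwise arise from meridional capping.
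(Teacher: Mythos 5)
The paper does not prove this lemma; it is imported verbatim from Kanenobu's Theorem~2 (the citation \cite{kanenobu} in the statement is the proof, as far as this paper is concerned), so there is no internal argument to compare yours against. Judged on its own terms, your outline follows the standard strategy and gets Property~A essentially right: irreducibility of $E(t)$ from irreducibility of $B$ plus the fact that $\partial t\subset\partial B$, incompressibility of $F(t)$ via the taxonomy of essential curves on a four-holed sphere, and the single-arc disc clause by pushing the arc on $A_j$ across the annulus or, in the spanning case, thickening the resulting boundary-parallelism of $t_j$ to manufacture a disc violating $(2')$. You also correctly notice the subtlety that the paper's Remark flags: only atoroidality is assumed (the Kinoshita--Terasaka tangle, to which the lemma is applied, is \emph{not} anannular), so the annulus clause of B$\,'$ must be extracted from atoroidality and primeness for annuli with $\partial\mathcal{A}\cap\partial F=\emptyset$, not from anannularity. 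Your plan to combine such an annulus with meridional discs or with sub-annuli of $\partial N(t_i)$ to produce a forbidden torus or separating disc is the right mechanism.

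That said, there are two genuine gaps. First, a local slip: a curve of $F(t)$ parallel to some $\partial D_i$ that compresses in $E(t)$, capped off by $D_i$, gives a sphere meeting $t$ transversely in \emph{one} point, not two; this is killed outright by a parity (mod~2 intersection) argument, since each arc has both endpoints on $\partial B$ and hence meets any separating sphere in $\mathring{B}$ evenly. It is not a splitting-$S^2$ and local triviality is not the relevant tool there; local triviality enters only in the cases where capping produces a sphere meeting a single arc twice. Second, and more seriously, the two clauses that carry the actual content of Kanenobu's theorem --- the annulus clause of B$\,'$ and the two-arc disc clause of C$\,'$ --- are left at the level of ``a standard cut-and-paste'' and ``a case analysis,'' with only the intended contradictions named. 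The case analysis is genuinely delicate: for the annulus clause one must separately handle boundary curves that are meridians (same string versus different strings, where homology in $H_1(E(t))\cong\mathbb{Z}^2$ and parity eliminate several configurations) and curves essential in $F(t)$ of $2{+}2$ type; for C$\,'$ one must track which annuli $A_1,A_2$ carry the two arcs of $\partial D\smallsetminus F(t)$ and whether each spans its annulus, and the case where both arcs span distinct annuli is exactly the configuration exhibiting the untangle. As written, the proposal asserts that every non-$\partial$-parallel configuration leads to a contradiction without exhibiting the capping surfaces case by case, so it is a credible road map rather than a proof of the hard steps.
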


\begin{example} \label{example_KT} The 2-string tangle $KT$ depicted in Figure \ref{fig_kt} is called the \emph{Kinoshita-Terasaka tangle}. In \cite{bleiler}, Lemma 2.1, Bleiler proved that $KT$ is prime. In \cite{soma}, Lemma 3, Soma proved that $KT$ is atoroidal. By Lemma \ref{lemma_kanenobu}, the exterior pair of $KT$ has Property C$\,'$. On the other hand, $KT$ is not anannular (e.g. see Kawauchi \cite{kawauchi_KT}, Remark 5.6).   
\end{example}

\begin{figure}[htb]
\begin{tabular}{c} 
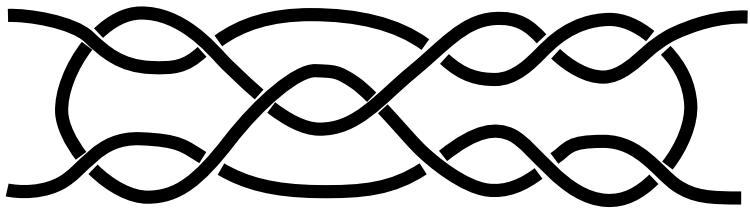 \end{tabular}
\caption{The Kinoshita-Terasaka tangle; a prime atoroidal tangle.} \label{fig_kt}
\end{figure}

\subsection{Complementary tangles} \label{sec_complementary} As discussed in the previous section, classical knots in $S^3$ with useful properties can be constructed by gluing together tangles in $3$-balls. Here we will construct knots in thickened surfaces with useful properties by gluing tangles in $3$-balls to \emph{complementary tangles}. Let $B$ be a $3$-ball embedded in the interior of a thickened surface $\Sigma \times I$. Let $E_B=\Sigma\times I \smallsetminus \mathring{B}$ be the exterior of $B$. A \emph{(2-string) complementary tangle} is a pair of disjoint properly embedded arcs $r=r_1\sqcup r_2$ such that $\partial r_1 \sqcup \partial r_2 \subset\partial B$. A complementary tangle is denoted by $(E_B,r)$. Complementary tangles will always be assumed to be oriented. Let $N(r)$ be a fixed regular neighborhood of $r$ in $E_B$ and define the \emph{exterior of} $r$ to be $E(r)=\overline{E_B \smallsetminus N(r)}$. Let $F(r)$ be the sphere with four holes given by $\overline{\partial B \smallsetminus \partial B \cap N(r)}$. The \emph{exterior pair} of $(E_B,r)$ is the $3$-manifold pair $[E(r),F(r)]$.

\begin{figure}[htb]
\begin{tabular}{c} \def\svgwidth{3.5in}
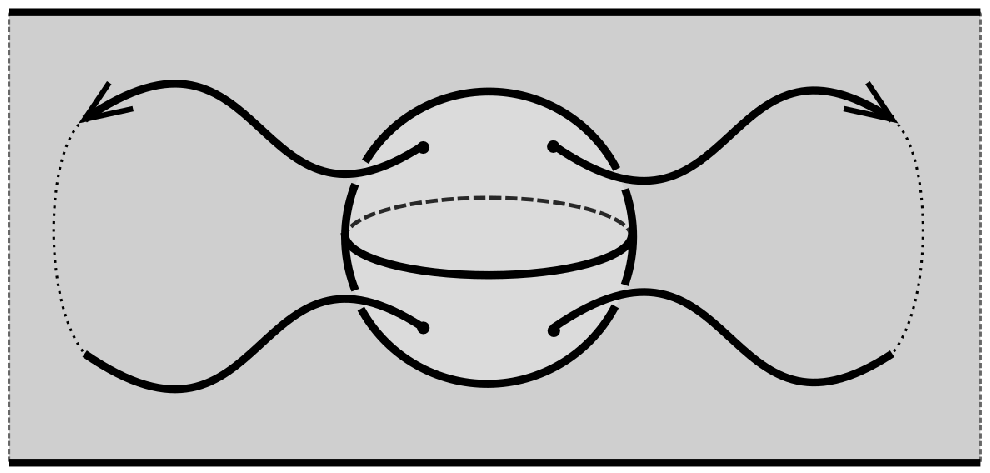 
\end{tabular}
\caption{A complementary tangle $(E_B,r)$ for a $3$-ball $B \subset \Sigma \times I$.} \label{fig_comp_tangle}
\end{figure}

Complementary tangles arise from knots and tangles as follows. Let $K \subset \Sigma \times I$ be an oriented knot and $B \subset \Sigma \times I$ a $3$-ball intersecting $K$ in two arcs $t=t_1 \sqcup t_2$ so that $K$ intersects $\partial B$ transversely. Then we say that $(B,t)$ \emph{intersects} $K \subset \Sigma \times I$. Let $r=K \cap E_B$ denote the two arcs $r_1$, $r_2$ of $K$ in $E_B$. The \emph{complement to $(B,t)$ in $K$} is $(E_B,r)$. This will be denoted by $K\ominus_B t$. Observe that the exterior $E_K$ of $K$ can be identified with the union of the exteriors of a tangle $(B,t)$ and its complement in $K$ by identifying them along the common subsurface $F(r)$ in their boundaries. 

Our first task is to find sufficient conditions on $(B,t)$ and $K\ominus_B t$ to ensure that $K$ is locally trivial. To that end, we make the following definition.

\begin{definition}[Irreducible, locally trivial complementary tangle] Let $B$ be a $3$-ball embedded in the interior of $\Sigma \times I$. A complementary tangle $(E_B,r)$ will be called \emph{irreducible} if its exterior pair $[E(r),F(r)]$ is irreducible. It will be called \emph{locally trivial} if each arc of $r$ is locally trivial in $E_B$ i.e., every splitting-$S^2$ intersecting $r$ transversely in two points bounds a $3$-ball $B'\subset E_B$ such that $B' \cap r$ is an unknotted arc.
\end{definition}

\begin{lemma} \label{lemma_loc_triv} Suppose a tangle $(B,t)$ intersects a knot $K \subset \Sigma \times I$, $\Sigma\ne S^2$. If $t$ is prime and its complementary tangle $K\ominus_B t$ is irreducible and locally trivial, then $K$ is locally trivial. 
\end{lemma}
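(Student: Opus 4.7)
The strategy is to fix an arbitrary splitting-$S^2$ $S$ for $K$, isotope it to minimize $|S\cap\partial B|$ among splitting-$S^2$s in its isotopy class, argue that this minimum is zero, and then apply the primeness of $t$ or the local triviality of $(E_B,r)$ directly. Because $\Sigma\neq S^2$, the manifold $\Sigma\times I$ is irreducible, so $S$ already bounds a 3-ball in $\Sigma\times I$ and the content of local triviality is that this 3-ball meets $K$ in an unknotted arc. After a small isotopy we may assume $S$ is transverse to both $\partial B$ and $K$, with $S\cap K$ disjoint from $\partial B$.

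Pick an innermost disc $D\subset S$ bounded by a circle $C\subset S\cap\partial B$; by symmetry we may take $D\subset E_B$. If $D\cap K=\emptyset$, then $D\subset E(r)$ with $\partial D\subset F(r)$, and incompressibility of $F(r)$ in $E(r)$ (part of the irreducibility of the complementary tangle) forces $C$ to bound a disc in $F(r)$; the 2-sphere obtained by capping $D$ with this disc bounds a 3-ball in the irreducible manifold $E(r)$, through which an isotopy of $D$ across $\partial B$ reduces $|S\cap\partial B|$, contradicting minimality. If $|D\cap K|=2$, both points of $S\cap K$ lie in $D$, and since $S^2$ with a non-empty collection of disjoint circles always has at least two innermost regions, another innermost disc of $S$ is disjoint from $K$ and falls under the previous case.

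The remaining case is $|D\cap K|=1$: say $D$ meets $r_1$ at a point $q$. The parity fact that any 2-sphere in $\Sigma\times I$ meets the closed curve $K$ in an even number of points, applied to $D$ together with either disc on $\partial B$ bounded by $C$, rules out $C$ being inessential in $F(r)$ or separating the four punctures of $F(r)$ into two pairs; hence $C$ is boundary-parallel in $F(r)$ to the meridian $\partial D_i$ at one of the four punctures $p_i$. When $p_i$ is an endpoint of $r_1$, the sphere $D\cup D'$, with $D'\subset\partial B$ the small disc bounded by $C$ that contains only $p_i$, perturbed slightly off $\partial B$ near $p_i$ into $E_B$, becomes a splitting-$S^2$ of the arc $r_1$ with two transverse intersections on $r_1$; local triviality of $r_1$ in $E_B$ then supplies a 3-ball through which $D$ can be pushed across $\partial B$, reducing $|S\cap\partial B|$. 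Once $|S\cap\partial B|=0$, $S$ lies in $B$ or in $E_B$, and primeness of $t$ (implying local triviality of each arc of $t$) or local triviality of $(E_B,r)$ respectively produces the required 3-ball in $\Sigma\times I$ meeting $K$ in an unknotted arc.

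The main obstacle is the subcase of $|D\cap K|=1$ in which $p_i$ is an endpoint of the \emph{other} arc $r_2$, so that the resulting 2-sphere meets two different arcs of $K$ and local triviality of a single arc does not apply. One way to resolve this is to convert the configuration on one side of $\partial B$ into one amenable to the incompressibility argument: either surger $D$ along a sub-arc of $r_1$ to obtain an annulus in $E(r)$ whose analysis uses $F(r)$-incompressibility, or pass to the annulus adjacent to $D$ across $C$ in $S$, which lies in $E(t)$ and is constrained by primeness of $t$ (in particular by property $(2')$, that no disc in $B$ disjoint from $t$ separates its arcs). The delicate geometric bookkeeping here — tracking intersections, choice of side, and verifying that the reduction does not create new intersections with $\partial B$ — is the principal difficulty in the proof.
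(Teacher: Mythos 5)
Your overall strategy is the same as the paper's: minimize $|S\cap\partial B|$, eliminate innermost intersection circles using incompressibility of the four-punctured sphere $F$ in the two exterior pairs together with irreducibility, use the parity of the intersection of a separating sphere with the closed curve $K$ to force an innermost once-punctured disc $D\subset S$ to be capped by a once-punctured disc $D'\subset\partial B$, and then push $D$ across a $3$-ball supplied by local triviality. The proposal has a genuine gap, however, exactly where you flag it: you leave the subcase in which $D\cup D'$ allegedly meets two different strands unresolved, and the two fixes you sketch (surgering $D$ along a sub-arc of $r_1$, or passing to the adjacent annulus and invoking property $(2')$) are not carried out. As written, the key case of the induction is not closed.

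The good news is that your ``main obstacle'' is vacuous. Suppose $D\subset E_B$ meets $r_1$ at $q\in\mathring{r}_1$ and $D'\subset\partial B$ contains the single puncture $p_i$. The sphere $\Sigma'=D\cup D'$ is disjoint from $\mathring{B}$, which is connected, so $\mathring{B}$ lies in a single complementary component of $\Sigma'$; in particular $t_1$ and $t_2$ are not separated by $\Sigma'$. Since $\Sigma'\cap K=\{q,p_i\}$, these two points cut $K$ into two arcs lying on opposite sides of $\Sigma'$, so one arc contains both $\mathring{t}_1$ and $\mathring{t}_2$ and the other contains neither. The latter is a connected subset of $r_1\sqcup r_2$, hence lies in a single strand; as $q\in\mathring{r}_1$ is an endpoint of its closure, that strand is $r_1$ and $p_i\in\partial r_1$. (Equivalently: by the definition of a locally trivial complementary tangle, the perturbed sphere bounds a ball $B''\subset E_B$, and $K\cap B''$ is then a single arc contained in $r_1\sqcup r_2$, hence in one strand.) The symmetric argument, using connectedness of $\mathring{E}_B$, handles $D\subset B$, where the two intersection points must lie on a single $t_i$ and local triviality of the arcs of the prime tangle applies. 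So the ``two different arcs'' configuration never occurs and no surgery or annulus argument is needed. One smaller point: your phrase ``by symmetry we may take $D\subset E_B$'' conceals that the two sides of $\partial B$ carry different hypotheses --- on the $B$ side you need incompressibility of $F(t)$ in $E(t)$ and local triviality of the arcs of $t$, both consequences of primeness of $(B,t)$ --- so both cases must actually be written out, though the arguments run in parallel.
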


\begin{proof} Let $S$ be a splitting-$S^2$ for $K$. Let $[E,F]$ be the exterior pair of $K\ominus_B t$. Then $F=F(t)$. Furthermore, suppose that $S$ is chosen so that $S \cap F$ has the minimal number of connected components among all splitting-$2$-spheres $S'$ such that $(S,S \cap K)$ and $(S',S'\cap K)$ are isotopic as pairs.  Since $K\ominus_B t$ is irreducible, $[E,F]$ is an irreducible pair. Since $(B,t)$ is prime, $[E(t),F]$ is also an irreducible pair. Thus, $F$ is incompressible in $E(K)=E(t) \bigcup_F E$. 

Let $A$ be the annulus obtained by deleting small open disc neighborhoods of $S \cap K$ from $S$ that are disjoint from $F$. Note that $A$ is incompressible in the exterior of $K$. To see this, let $D$ be a compression disc. Then $\partial D$ is essential in $A$. Let $D'$ be a disc in $S$ such that $\partial D'=\partial D$ and $D'$ intersects $K$ in one point. This implies $D \cup D'$ is a $2$-sphere intersecting $K$ in one point, which is impossible as $\Sigma \times I$ is irreducible. Thus $A$ is incompressible in the exterior of $K$.

Now observe that each component of $C\subset S \cap F$ must be essential in both $A$ and $F$. Indeed, since $F$ is incompressible in $E(K)$, an innermost inessential component $C$ in $A$ bounds a compression disc for $F$. Hence, $C$ must likewise be inessential on $F$. Since $C$ is inessential on both $A$ and $F$ and $\Sigma \times I$ is irreducible, it may as usual be removed by an isotopy. As this contradicts the hypothesis on $S$, there can be no components $C$ that are inessential in $A$. Similarly, it can be shown that there are no components $C\subset S \cap F$ that are inessential in $F$.

Let $C$ be an innermost component of $S \cap F$ on $S$. Then since $C$ is essential in $A$, $C$ bounds a disc $D$ in $S$ such that $D \cap K$ contains one point and $D \cap F=C$. Since $C$ is essential in $F$, $C$ must bound a disc $D' \subset \partial B$ containing one or two points of $K$. If $D'$ contains two points of $K$, then $D \cup D'$ is a 2-sphere intersecting $K$ in three points. Hence $D \cup D'$ is a non-separating $S^2$, which contradicts the fact that $\Sigma \times I$ is irreducible. Thus, $D'$ intersects $K$ in one point. This implies $D \cup D'$ is a splitting-$S^2$ of $K$ in $\Sigma \times I$. By irreducibility of $\Sigma \times I$, there is a  $3$-ball $B'' \subset \Sigma \times I$ with $\partial B''=D \cup D'$. If $D \subset B$, then we may assume by the local triviality of $(B,t)$ that $B''\subset B$ and $B''$ intersects $K$ in an unknotted arc. If $D \subset E_B$, then we may assume by the local triviality of $K\ominus_B t$ that $B''\subset E_B$ and that $B''$ intersects $K$ in an unknotted arc. Then $(S,S \cap K)$ may be modified by an isotopy to reduce the number of components of $S \cap F$, contradicting our choice of $S$.

Therefore, we may assume that $S \cap \partial B=\emptyset$. Then $S \subset B$ or $S\subset E_B$. Since $(B,t)$ and $K\ominus_B t$ are locally trivial, $S$ bounds a $3$-ball in either $B$ or $E_B$ that intersects $K$ in an unknotted arc. 
\end{proof}

Next we give sufficient conditions on a tangle $(B,t)$ and its complement $K \ominus_B t$ so that the knot $K$ has simple and Haken exterior.

\begin{lemma} \label{lemma_sum_hyperbolic} Suppose a tangle $(B,t)$ intersects a knot $K \subset \Sigma \times I$. If $t$ is prime and atoroidal and the exterior of $K\ominus_B t$ is simple and Haken, then the exterior of $K$ is simple and Haken.
\end{lemma}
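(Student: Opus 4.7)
My approach is to apply Myers' Lemma~\ref{lemma_myers_2p5} to the decomposition $E(K)=E(r)\cup_F E(t)$, where $F=F(t)=F(r)$ is the common four-holed sphere identifying the two exterior pairs and $r = K\ominus_B t$. Setting $M_0=E(r)$ and $M_1=E(t)$, it suffices to show that $[E(t),F]$ has Property C$\,'$ and $[E(r),F]$ has Property B$\,'$. The first is immediate from Lemma~\ref{lemma_kanenobu}, since $(B,t)$ is prime and atoroidal by hypothesis.

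The substantive step is to extract Property B$\,'$ for $[E(r),F]$ from the assumption that $E(r)$ is simple and Haken. Most clauses are free: $F$ is a four-holed sphere, so no component of $F$ is a disc, $2$-sphere, annulus, or torus; irreducibility and $\partial$-irreducibility of $E(r)$ make both pairs $[E(r),F]$ and $[E(r),\overline{\partial E(r)\smallsetminus F}]$ irreducible; anannularity forces every incompressible annulus with boundary disjoint from $\partial F$ to be $\partial$-parallel; and atoroidality forces every incompressible torus in $E(r)$ to be $\partial$-parallel. The remaining piece of Property A --- the disc condition --- requires a dedicated argument, and this is the main obstacle.

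To verify the disc condition, let $D \subset E(r)$ be a properly embedded disc with $\gamma:=D\cap F$ an arc and $\delta:=\partial D\smallsetminus \gamma$ an arc in $\overline{\partial E(r)\smallsetminus F}$. If $\gamma$ is inessential in $F$, it cobounds a disc $D_F \subset F$ with a subarc of $\partial F$, and $D\cup D_F$ is a disc properly embedded in $E(r)$ with boundary on $\overline{\partial E(r)\smallsetminus F}$; by $\partial$-irreducibility it cuts off a $3$-ball from a disc in the boundary, and pushing $D$ across this ball shows $D$ is $\partial$-parallel. If $\gamma$ is essential in $F$, I would construct an essential annulus in $E(r)$ from $D$ by taking two parallel copies of $D$, the thin rectangular neighborhood of $\gamma$ in $F$ between the two copies of $\gamma$, and a matching band along $\delta$ in $\overline{\partial E(r)\smallsetminus F}$; perturbing this closed surface slightly into the interior of $E(r)$ yields a properly embedded annulus whose incompressibility follows from $F$ being incompressible and whose failure to be $\partial$-parallel follows from the essentiality of $\gamma$ in $F$, contradicting the anannularity of $E(r)$. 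With Property B$\,'$ in hand for $[E(r),F]$ and Property C$\,'$ in hand for $[E(t),F]$, Lemma~\ref{lemma_myers_2p5} yields that $E(K)$ is simple and Haken.
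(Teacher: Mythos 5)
Your overall architecture is the same as the paper's: decompose $E(K)=E(r)\cup_F E(t)$ with $r=K\ominus_B t$, obtain Property C$\,'$ for the tangle side from Lemma \ref{lemma_kanenobu}, and reduce via Lemma \ref{lemma_myers_2p5} to establishing Property B$\,'$ for $[E(r),F]$. But you have located the difficulty in the wrong place, and the step you dismiss as free is the one that carries the actual content. The assertion that ``irreducibility and $\partial$-irreducibility of $E(r)$ make both pairs $[E(r),F]$ and $[E(r),\overline{\partial E(r)\smallsetminus F}]$ irreducible'' is a genuine gap: irreducibility of a pair requires $F$ to be incompressible in $E(r)$, and $F$ is not a boundary component of $E(r)$ but a proper subsurface (a four-holed sphere) of the closed genus-two boundary component $X$. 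Incompressibility of a boundary component does not pass to its subsurfaces --- an annular neighborhood of an inessential circle in an incompressible torus is compressible --- so one must check that essential curves of $F$ remain essential in $X$. The paper does exactly this: $X$ is recovered from $F$ by attaching the two annuli of $\overline{X\smallsetminus F}$ along $\partial F$, so every component of $\partial F$ is essential in $X$, whence $\pi_1(F)\to\pi_1(X)\to\pi_1(E(r))$ is injective; the same reasoning gives incompressibility of the two annuli $\overline{X\smallsetminus F}$, which is likewise needed for the second pair. This verification is the heart of the lemma and is absent from your argument.

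Conversely, the disc condition you call ``the main obstacle'' is immediate, and your argument for it fails as written. It is immediate because in an irreducible, $\partial$-irreducible $3$-manifold whose boundary contains no spheres, the boundary of any properly embedded disc $D$ is inessential in $\partial E(r)$, hence bounds a disc $D'\subset\partial E(r)$; the sphere $D\cup D'$ bounds a ball, so $D$ is $\partial$-parallel with no case analysis on $\gamma=D\cap F$. It fails as written because the surface you assemble from two parallel copies of $D$ together with the bands along $\gamma$ and $\delta$ is precisely $\partial N(D)$, which is a $2$-sphere; perturbing a closed surface into the interior cannot yield a properly embedded annulus, so the intended contradiction with anannularity never materializes. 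Supply the incompressibility argument for $F$ and $\overline{\partial E(r)\smallsetminus F}$, and replace the annulus construction with the one-line disc observation, and your proof coincides with the paper's.
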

\begin{proof} By Lemma \ref{lemma_kanenobu}, $(B,t)$ has Property C$\,'$.  By Lemma \ref{lemma_myers_2p5}, it is sufficient to prove that the exterior of $K\ominus_B t$ has Property B$\,'$. Let $[E,F]$ denote the exterior pair of $K\ominus_B t$. Since $F$ is a sphere with four holes, no component of $F$ is a disc, annulus, $2$-sphere, or torus. Since $E$ is simple and Haken, it is irreducible, boundary irreducible, anannular, and atoroidal.  This implies that every disc $D\subset E$ intersecting $F$ in a single arc is $\partial$-parallel. 

Next we show that $[E,F]$ is an irreducible pair. Note that $E$ has three boundary components. They are $\Sigma \times \{1\}$, $\Sigma \times \{0\}$ and a component $X$ homeomorphic to a two-holed torus. Since $E$ is $\partial$-irreducible, $\partial E$ is incompressible and hence the inclusion map induces on the fundamental group of each component of $\partial E$ an injection into $\pi_1(E,*)$ (for $* \in F$). Now, $F$ is sphere with four holes contained in $X$. Observe that $X$ is obtained from $F$ by attaching two annuli $A_1,A_2$ to $\partial F$. Then every component of $\partial F$ is essential in $X$. It follows that the inclusion map induces an injection $\pi_1(F,*) \to \pi_1(X,*) \to \pi_1(E,*)$ and we conclude that $F$ is incompressible. This implies that $[E,F]$ is an irreducible pair.

It remains only to prove that $[E,\overline{\partial E\smallsetminus F}]$ is an irreducible pair. Note that $\overline{\partial E\smallsetminus F}= \Sigma \times \{0\} \cup \Sigma \times \{1\} \cup \overline{X\smallsetminus F}$. The first two components are incompressible in $E$. Note that $\overline{X \smallsetminus F}$ is the pair of annuli $A_1,A_2$. Since each $A_i$ has fundamental group generated by an essential closed curve in $X$, it follows that the inclusion $\pi_1(A_i,*) \to \pi_1(X,*) \to \pi_1(E,*)$ (for $*\in A_i$) is injective and thus $\overline{X\smallsetminus F}$ is incompressible.
\end{proof}

Lastly, we have a technical lemma that gives sufficient conditions for a complementary tangle to be prime. It will be used in the proof of Theorem \ref{thm_B}.

\begin{lemma} \label{lemma_comp_prime} Suppose a tangle $(B,t)$ intersects a knot $K \subset \Sigma \times I$, $\Sigma \ne S^2$. If $K$ is locally trivial and the exterior of $K \ominus_B t$ is irreducible, boundary irreducible, and atoroidal, then $K\ominus_B t$ is an irreducible and locally trivial complementary tangle.
\end{lemma}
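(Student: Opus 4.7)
The conclusion has two independent parts: (I) $[E(r),F(r)]$ is an irreducible pair, and (II) each arc of $r$ is locally trivial in $E_B$. Part (I) reduces to showing $F(r)$ is incompressible in $E(r)$, since $E(r)$ is irreducible by hypothesis. I would prove this by contradiction: if $D\subset E(r)$ were a compression disc for $F(r)$ with $\partial D$ essential, I claim $\partial D$ is also essential on the closed genus-$2$ component $G=F(r)\cup T_1\cup T_2$ of $\partial E(r)$, where $T_i=\partial N(r_i)\cap\mathring E_B$ is the tube around $r_i$. A case analysis on the isotopy type of $\partial D$ in $F(r)$ does it: a curve parallel to a boundary circle of $F(r)$ becomes a meridian of one of the handles of $G$; a curve separating the four punctures of $F(r)$ into two pairs is, depending on the pairing, either a separating genus-$2$ curve splitting $G$ into two once-punctured tori or a non-separating curve linking the two handles. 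In every case $\partial D$ is essential on $G$, so $D$ would compress $G$, contradicting boundary irreducibility of $E(r)$.

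For Part (II), let $S\subset E_B$ be a splitting-$S^2$ for $r$, chosen within its isotopy class through splitting spheres to minimize $|S\cap\partial B|$. Since $t\subset B$ and $S\subset E_B$, $S\cap t=\emptyset$, so $S$ is also a splitting-$S^2$ for $K$ in $\Sigma\times I$. By local triviality of $K$, $S$ bounds a $3$-ball $\tilde B\subset\Sigma\times I$ with $\tilde B\cap K$ an unknotted arc. If $\tilde B\cap\mathring B=\emptyset$, then $\tilde B\subset E_B$ and $\tilde B\cap r=\tilde B\cap K$ is the required unknotted arc, so it suffices to rule out $\tilde B\supset B$. First, I would run the standard innermost-circle reduction on $S\cap\partial B$, parallel to the proof of Lemma \ref{lemma_loc_triv}: a circle of $S\cap\partial B$ inessential in $S$ or in $F(r)$ can be removed by isotopy using irreducibility of $\Sigma\times I$ together with incompressibility of $F(r)$ from (I), while a circle essential on both $S$ and $F(r)$ would produce a compression disc for $G$, again contradicting boundary irreducibility. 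After this reduction the bad case becomes $S\cap\partial B=\emptyset$ with $\tilde B\supset B$.

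To dispose of that final sub-case, form the torus $\tau\subset E(r)$ by gluing the annulus $A=S\cap E(r)$ (whose two boundary circles are meridians on $\partial N(r)$) to the sub-annulus of $\partial N(r)$ they cobound, pushed slightly into the interior of $E(r)$. One first checks that $A$ is incompressible in $E(r)$: a compression disc for $A$ would surger it into two discs with essential boundary on $G$, compressing $G$. By atoroidality of $E(r)$, $\tau$ is then either compressible or boundary parallel. Boundary parallelism forces $A$ itself to be parallel into a sub-annulus of $\partial N(r)$, placing $\tilde B$ as a small ball around a subarc of $r$, hence disjoint from $B$, contradicting $\tilde B\supset B$. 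Compressibility yields a disc whose essential boundary on $\tau$ surgers $\tau$ into a $2$-sphere in $E(r)$, which by irreducibility of $E(r)$ bounds a $3$-ball; using this ball as an isotopy region either reduces $|S\cap\partial B|$, contradicting minimality, or moves us into the good sub-case $\tilde B\subset E_B$.

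The principal obstacle is exactly this last step. Because $\tilde B\cap K$ can be a single unknotted arc in $\tilde B$ even while containing all of $t$ and several sub-arcs of $r$, local triviality of $K$ alone does not force $\tilde B\subset E_B$; the argument must leverage the full combination of irreducibility, boundary irreducibility, and atoroidality of $E(r)$ through the auxiliary torus $\tau$, and the delicate point is verifying that neither of the two alternatives offered by atoroidality is actually compatible with $\tilde B\supset B$.
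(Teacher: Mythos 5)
Your Part (I) and the overall architecture of Part (II) match the paper's proof: irreducibility of the exterior pair comes from showing $F(r)$ is $\pi_1$-injective into the closed genus-two boundary component $G$ and invoking boundary irreducibility, and the bad case $\tilde B\supset B$ is attacked via the torus $\tau=\partial\bigl(\tilde B\cup N(r'')\bigr)$, where $r''=K\cap\overline{\Sigma\times I\smallsetminus \tilde B}$ is a single subarc of one $r_i$, together with atoroidality. (Incidentally, the innermost-circle reduction on $S\cap\partial B$ is vacuous: a splitting-$S^2$ of the complementary tangle lives in the interior of $E_B$, so it is disjoint from $\partial B$ from the start.)

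The gap is in how you dispose of the two alternatives that atoroidality offers for $\tau$; as written, neither branch closes. In the $\partial$-parallel branch, $\tau$ is a closed torus in the interior of $E(r)$, so boundary-parallelism means $\tau$ is isotopic to an entire boundary component of $E(r)$ --- not ``parallel into a sub-annulus of $\partial N(r)$,'' which would be the conclusion of an anannularity hypothesis you do not have. The correct dispatch is: $\tau$ cannot be parallel to $G$ (wrong genus), and it cannot be parallel to $\Sigma\times\{0\}$ or $\Sigma\times\{1\}$ (a possibility only when $\Sigma$ is a torus) because $\tau$ bounds the solid torus $\tilde B\cup N(r'')$ in $\Sigma\times I$ and is therefore compressible there, whereas $\Sigma\times\{i\}$ is incompressible. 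In the compressible branch your move is backwards: the $2$-sphere obtained by compressing $\tau$ does \emph{not} bound a ball in $E(r)$ --- it separates the boundary component $G$ from $\Sigma\times\{0\}\cup\Sigma\times\{1\}$, so each complementary region contains boundary of $E(r)$ --- and \emph{that} is the contradiction with irreducibility. Your version, which grants the ball and then tries either to reduce $|S\cap\partial B|$ (already zero) or to ``move to the good sub-case,'' never produces a contradiction. So the step you yourself flagged as delicate is exactly where the argument fails, in both of its branches.
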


\begin{proof} Let $[E,F]$ be the exterior pair of $K\ominus_B t$. Again, $E$ has three boundary components $\Sigma \times \{1\}$, $\Sigma \times \{0\}$, and a component $X$ homeomorphic to a two-holed torus. By the proof of Lemma \ref{lemma_sum_hyperbolic}, $[E,F]$ is irreducible. Thus it needs only to be shown that $K \ominus_B t$ is locally trivial. 

Let $S$ be a splitting-$S^2$ of $K\ominus_B t$ and let $A$ be the annulus obtained by deleting from $S$ small neighborhoods of $K \cap S$. Since $\Sigma \times I$ is irreducible, $S$ bounds a $3$-ball $B'' \subset \Sigma \times I$. Since $E$ is also irreducible, either $B \subset B''$ or $B'' \subset E_B$. If $B'' \subset E_B$, then the local triviality of $K$ implies that $B''$ intersects $K$ in an unknotted arc.

We complete the proof by showing that $B$ cannot be in $B''$. If $B \subset B''$, let $E_{B''}=\Sigma \times I \smallsetminus \mathring B''$ and define $r''$ to be $K \cap E_{B''}$. Note that $r''$ is a single arc. Let $M$ be the solid torus formed from the union of a regular neighborhood of $r''$ and $B''$. Set $T=\partial M$. Observe that $X \subset M$.

Since $E$ is atoroidal, $T$ is either compressible or $\partial$-parallel in $E$. The torus $T$ cannot be $\partial$-parallel to the two-holed torus $X$. If $T$ is $\partial$-parallel to $\Sigma \times \{1\}$ or $\Sigma \times \{0\}$ in $E$, then $T$ is incompressible in $\Sigma \times I$. But this is impossible as $T$ bounds the solid torus $M \subset \Sigma \times I$. If $T$ is compressible in $E$, let $D$ be a compression disc. The 2-sphere created by compressing $T$ along $D$ separates $E$ into a component that contains $X$ and a component that contains $\partial (\Sigma \times I)$. Hence the sphere doesn't bound a $3-$ball. Since $E$ is irreducible, this is impossible and $B$ cannot be contained in $B''$. 
\end{proof}

\subsection{Tangle operations and prime virtual knots} \label{sec_tangle_ops} Here we give the main construction used in the proof of Theorem \ref{thm_B}. The join of a tangle and a complementary tangle is defined and then studied in terms of the virtual genus, local triviality, compressible decompositions, and special decompositions.

\begin{definition}[Join of a tangle and a complementary tangle] \label{defn_tangle_join} Let $B$ be a $3$-ball in $\Sigma \times I$ and let $(E_B,r)$ be a complementary tangle, with $r=r_1\sqcup r_2$. Let $(B',t')$ be a $2$-tangle with oriented arcs, where $t'=t_1' \sqcup t_2'$. Identify $B'$ and $B$ by an orientation preserving diffeomorphism such that $\partial t_i'=\partial r_i$ and terminal/initial points of $\partial t_i$ are mapped to initial/terminal points of $\partial r_i$, respectively, for $i=1,2$. Furthermore suppose that $K=r \cup t' \subset \Sigma \times I$ is a knot in $\Sigma \times I$. Then $K$ is called \emph{a join of the tangle $(B',t')$ and the complementary tangle $(E_B,r)$}. A join of $(B',t')$ and $(E_B,r)$ will be denoted by $r\oplus_{B'} t'$.\end{definition}

If a tangle $(B,t)$ intersects a knot $K \subset \Sigma \times I$ and $(B',t')$ is any other tangle, new knots $K'\subset \Sigma \times I$ may be obtained using complements and joins. The new knots are of the form $(K\ominus_B t) \oplus_{B'} t'$. The following lemma gives sufficient conditions on $(B',t')$ and $K\ominus_B t$ so that $K$ and $K'$ have the same virtual genus.

\begin{lemma} \label{lemma_min_genus_preserved} Suppose a tangle $(B,t)$ intersects a knot $K \subset \Sigma \times I$, $\Sigma \ne S^2$, and that $K\ominus_B t$ is an irreducible complementary tangle. Let $(B',t')$ be a prime tangle. If $K$ is a minimal representative of a virtual knot, then $K'=(K\ominus_B t)\oplus_{B'} t'$ is a minimal representative of a virtual knot.
\end{lemma}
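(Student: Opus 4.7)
The plan is to verify the criterion of Theorem \ref{thm_compress} for $K'$. Suppose, for contradiction, that $A \subset \Sigma \times I$ is an incompressible vertical annulus disjoint from $K'$. I would make $A$ transverse to $\partial B$ and choose $A$ within its isotopy class (through annuli disjoint from $K'$) so as to minimize the number of components of $A \cap \partial B$. All such components lie in the four-holed sphere $F(r) = \partial B \smallsetminus N(\partial r)$, since $\partial r = \partial t' = K' \cap \partial B$. The goal is to show $A \cap \partial B = \emptyset$; once this is in hand, $A \subset E_B$, and since $K$ and $K'$ coincide on $E_B$, $A$ is also a destabilization annulus for $K$, so Theorem \ref{thm_compress} applied to the minimal $K$ forces $A$ to be compressible, contradicting the assumption.

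First I would eliminate any component $C \subset A \cap \partial B$ that is inessential in $A$, taking $C$ to be innermost and bounding a disc $D \subset A$ with $D \cap K' = \emptyset$. If $D \subset B$, primeness of $(B, t')$ in the form of condition $(2')$ says that $D$ cannot separate the two arcs of $t'$, so one of the two $3$-balls into which $D$ divides $B$ is disjoint from $t'$, and I push $D$ across it to decrease $|A \cap \partial B|$, contradicting minimality. If $D \subset E_B$, irreducibility of $K \ominus_B t$ gives incompressibility of $F(r)$ in $E(r)$, so $C$ bounds a disc $D' \subset F(r)$. Then $D \cup D'$ is a $2$-sphere in $\Sigma \times I$ disjoint from $K'$, which bounds a $3$-ball $B''$ by irreducibility of $\Sigma \times I$ (we have $\Sigma \ne S^2$). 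Either $B'' \subset E_B$, in which case $K' \cap B'' = \emptyset$ and one pushes $D$ through $B''$ to reduce, or $B \subset B''$; the latter forces $K \subset B''$ as well, contradicting the fact that a minimal $K$ with $\Sigma \ne S^2$ is non-classical and hence cannot lie in a $3$-ball.

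After these reductions, every remaining component of $A \cap \partial B$ is essential in $A$ and parallel to $\partial A$. Choose an outermost such $C$, cobounding an annulus $\alpha \subset A$ with a component of $\partial A$. Since one end of $\alpha$ lies in $\partial(\Sigma \times I)$ and $B$ lies in the interior of $\Sigma \times I$, $\alpha$ must lie in the exterior of $B$. Applying Lemma \ref{lemma_experiment_2} with $F = \partial B$ and $N = B$ yields that $C$ is essential in $F = \partial B$; but $\partial B$ is a $2$-sphere, a contradiction. Thus $A \cap \partial B = \emptyset$, finishing the argument. The principal technical obstacle is the inessential-disc reduction in the $D \subset E_B$ case, where one must rule out $B \subset B''$: this is where the irreducibility of the complementary tangle $K \ominus_B t$, the irreducibility of $\Sigma \times I$, and the non-classicality of the minimal $K$ must all be used simultaneously.
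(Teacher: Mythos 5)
Your proof is correct and follows essentially the same route as the paper's: assume an incompressible destabilizing annulus $A$ for $K'$, minimize $|A\cap\partial B|$, remove inessential intersection circles by an innermost-disc argument, and then apply Lemma \ref{lemma_experiment_2} to an outermost $\partial$-parallel circle to contradict the fact that $\partial B$ is a $2$-sphere, finally reducing to $A\cap\partial B=\emptyset$ and invoking Theorem \ref{thm_compress} for the minimal $K$. The only (harmless) difference is in the bookkeeping for removing inessential circles: you split into the cases $D\subset B$ (using primeness condition $(2')$) and $D\subset E_B$ (using incompressibility of $F(r)$ and explicitly ruling out $B\subset B''$), whereas the paper uniformly invokes incompressibility of the four-holed sphere $F$ in the exterior of $K'$.
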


\begin{proof} Let $[E,F]$ be the exterior pair of $K\ominus_B t$ and let $E(K')$ be the exterior of $K'$. Since $[E,F]$ is irreducible and $[E(t'),F]$ is irreducible, $F$ is incompressible in $E$, $E(t')$ and thus $E(K')$. Let $A$ be an incompressible destabilization annulus for $K'$. If $A \cap F=\emptyset$, then $A$ is a destabilization for $K$. Since $K$ is minimal, Theorem \ref{thm_compress} implies the contradiction that $A$ is compressible in $\Sigma \times I$. Assume then that $A \cap F \ne \emptyset$ and that $A$ is chosen to minimize the number of components of $A \cap F$, up to isotopy. 

Since $A$ and $F$ are both incompressible, an innermost inessential component $C \subset A \cap F$ on one of them must also be inessential on the other. As usual, such $C$ can be removed by isotopy, which contradicts the hypothesis on $A$. Hence, we may assume that $A \cap F$ contains no components that are inessential on $A$.

Thus, all $C \subset A\cap F$ are $\partial$-parallel in $A$. Choose $C$ to be outermost, so that $C$ cobounds an annulus $\alpha \subset A$ with a component of $\partial A$ such that $\alpha \cap F=C$. Since $A$ is incompressible, Lemma \ref{lemma_experiment_2} implies that $C$ is essential on $\partial B'$. Since $2$-spheres have no  essential curves, this is a contradiction. This implies that there are no incompressible destabilizations of $K'$ in $\Sigma \times I$. By Theorem \ref{thm_compress}, $K'$ is minimal. 
\end{proof}

The next lemma gives sufficient conditions on a join of $(B',t')$ and $K\ominus_B t$ so that $(K\ominus_B t)\oplus_{B'} t'$ represents a prime virtual knot.

\begin{lemma}\label{lemma_no_annular} Suppose a tangle $(B,t)$ intersects a minimal representative $K \subset \Sigma \times I$ of a non-classical virtual knot. Furthermore, suppose that $K \ominus_B t$ is irreducible and locally trivial and that $(B',t')$ is prime. Then it follows that:
\begin{enumerate}
\item any knot $K'=(K\ominus_B t)\oplus_{B'} t'$ is locally trivial, 
\item if $K$ admits no incompressible decomposing annuli, then neither does $K'$, and
\item if $K$ admits no special decompositions, then neither does $K'$.
\end{enumerate}
\end{lemma}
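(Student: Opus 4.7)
Part (1) is immediate from Lemma \ref{lemma_loc_triv}: under the identification of $B'$ with $B$ used in forming the join, the complementary tangle of $(B',t')$ in $K'$ is literally $K\ominus_B t$, which by hypothesis is irreducible and locally trivial. Since $(B',t')$ is prime, Lemma \ref{lemma_loc_triv} yields that $K'$ is locally trivial.

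The strategy for parts (2) and (3) is that any incompressible decomposing annulus for $K'$ (respectively, any pair of vertical annuli forming a special decomposition of $K'$) can be isotoped to be disjoint from $\partial B'$. Once this is achieved the annulus (respectively, each annulus of the pair) lies in $E_B$, and since $K\cap E_B = r = K'\cap E_B$, it gives a corresponding decomposition for $K$, contradicting the hypothesis on $K$. I will focus on (2); part (3) is analogous and in fact simpler, since each annulus $A_i$ of a special decomposition is automatically incompressible by Lemma \ref{lemma_special_incompress} and meets $K'$ in only one point, which eliminates several sub-cases.

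Let $A$ be an incompressible decomposing annulus for $K'$. Set $F=F(t')$ and let $Y$ be $A$ with small neighborhoods of $A\cap K'$ removed. Primeness of $(B',t')$ and irreducibility of $K\ominus_B t$ together imply that $F$ is incompressible in both $E(t')$ and $E(r)$. Choose $A$ to minimize $|A\cap F|$ over isotopies of $A$ preserving its status as a decomposing annulus for $K'$. The argument follows the template of the proof of Lemma \ref{lemma_lemma}(2), with the ball $B'$ in place of the regular neighborhood $N$. Paragraph $(\star)$ applies verbatim to eliminate components of $A\cap F$ inessential in $Y$, using the non-classicality of $K'$ (from Lemma \ref{lemma_min_genus_preserved}) to ensure $K'$ lies outside the $3$-balls produced. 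Components essential on $A$, which are necessarily $\partial$-parallel on the annulus $A$, are ruled out cleanly by Lemma \ref{lemma_experiment_2}: an outermost such component $C$ cobounds an annulus $\alpha\subset A\cap E_B$ with a component of $\partial A$, and the lemma applied to the closed surface $\partial B'=S^2$ bounding the ball $B'\subset\Sigma\times I$ would force $C$ to be essential on $S^2$, which is impossible. The remaining components are inessential on $A$ but essential on $Y$; they bound a disc $D\subset A$ containing either one or two points of $A\cap K'$. For each such $C$, capping $D$ with an appropriate sub-disc of $\partial B'$ produces a splitting-$S^2$ of $K'$, and the local triviality of $K'$ established in Part (1) yields an unknotted sub-arc of $K'$ in the $3$-ball it bounds. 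An ambient isotopy supported near this sub-arc either reduces $|A\cap F|$ (contradicting minimality) or converts $A$ into an annulus disjoint from $K'$, which would then be a destabilizing annulus for the minimal representative $K'$ and by Theorem \ref{thm_compress} compressible, contradicting incompressibility of $A$. A final isotopy eliminates any inessential circles in $A\cap(\partial B'\setminus F)$, and we conclude $A\cap\partial B'=\emptyset$.

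The principal obstacle will be the $1$-puncture sub-case, for which Lemma \ref{lemma_lemma}(2) used a wrapping-number argument (paragraph $(\star\star)$) that has no direct tangle analog: here $\partial B'$ is a sphere rather than a torus, so there are no essential curves on $\partial B'$ to serve as ``meridians'' of $B'$. The replacement exploits primeness of $(B',t')$ together with local triviality of $K'$ to extract an unknotted sub-arc from the splitting-$S^2$ constructed out of the punctured disc, and then a careful ambient isotopy supported in a neighborhood of this arc slides the single $K'$-intersection across $\partial B'$, reducing $|A\cap F|$ by one. Chaining these reductions together yields $A\cap\partial B'=\emptyset$ and completes the argument for (2); part (3) then follows by applying the same analysis separately to each $A_i$.
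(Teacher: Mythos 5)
Your part (1) and your treatment of part (3) match the paper, and most of your part (2) follows the paper's actual route: minimize $|A\cap F|$, kill circles inessential in $Y$ by the usual innermost argument, rule out circles essential in $A$ via Lemma \ref{lemma_experiment_2} (no essential curves on $\partial B'=S^2$), and handle the one-puncture circles exactly as in the paper's paragraph $(\dagger\dagger)$, by building a splitting-$S^2$ and invoking local triviality of $K'$ to slide the intersection point across $\partial B'$. So far so good.

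The gap is in the two-puncture case, where $C$ bounds a disc $D\subset A$ containing \emph{both} points of $A\cap K'$. You assert that capping $D$ with "an appropriate sub-disc of $\partial B'$ produces a splitting-$S^2$ of $K'$," but the capping disc $D'\subset\partial B'$ may meet $K'$ in $0$, $1$, or $2$ of the four tangle endpoints, and only the first option yields a splitting sphere; when $C$ is essential in the four-punctured sphere $F$ no such choice of $D'$ exists, so local triviality of $K'$ gives you nothing. The paper instead performs the surgery $A'=(A\smallsetminus D)\cup D'$, which is a vertical annulus with $\partial A'=\partial A$, and splits into three sub-cases: if $|D'\cap K'|=0$ then $A'$ is a destabilizing annulus for the minimal $K$, hence compressible by Theorem \ref{thm_compress}, forcing $A$ compressible; if $|D'\cap K'|=1$ then $A'$ (hence $A$) is non-separating, impossible for a decomposing annulus; and if $|D'\cap K'|=2$ then $A'$ is a decomposing annulus for $K$ itself, so the hypothesis that $K$ admits no \emph{incompressible} decomposing annuli forces $A'$, and hence $A$, to be compressible. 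That last sub-case is where the hypothesis of part (2) is actually consumed; in your write-up it is only invoked at the very end, after $A\cap\partial B'=\emptyset$, so your argument cannot close this case. A secondary issue: even in the $|D'\cap K'|=0$ sub-case, your proposed "ambient isotopy supported near the unknotted sub-arc" cannot push $D$ across to $D'$ rel $K'$, since the arc of $K'$ inside the ball has both endpoints on $D$ and obstructs the isotopy; the correct move is the surgery above together with the observation that compressibility of a vertical annulus is detected by its boundary circles.
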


\begin{remark} Some paragraphs below are marked with daggers $(\dagger\cdots\dagger)$ for future reference.
\end{remark}

\begin{proof}[Proof of Lemma \ref{lemma_no_annular} (1)] As $K$ is non-classical, $\Sigma \ne S^2$. Since $(B',t')$ is prime and its complementary tangle $K\ominus_B t$ in $(K\ominus_B t)\oplus_{B'} t'$ is both irreducible and locally trivial, Lemma \ref{lemma_loc_triv} implies that $(K\ominus_B t)\oplus_{B'} t'$ is locally trivial.
\end{proof}

\begin{proof}[Proof of Lemma \ref{lemma_no_annular} (2)] Let $[E,F]$ be the manifold pair of $r=K\ominus_B t$ and let $E(K')=E(r) \cup_F E(t')$. Since $(B',t')$ is prime and $K\ominus_B t$ is irreducible, $F$ is incompressible in $E(t')$, $E=E(r)$, and $E(K')$. Let $A$ be an decomposition annulus for $K'$. Suppose that $A$ is incompressible in $\Sigma \times I$. We may assume $A \cap \partial B=A \cap F$. Suppose $A$ is chosen to minimize the number of components of $A \cap F$ among all annuli $A'$ with $(A,A \cap K')$ isotopic to $(A',A' \cap K')$. If $A \cap F=\emptyset$, then $A$ is a decomposing annulus for $K$. Since $K$ admits no incompressible decomposing annuli, this is a contradiction. Assume that $A \cap F \ne \emptyset$. 

Let $Y$ be the space obtained by removing from $A$ small open neighborhoods of $A \cap K'$ that are disjoint from $F$. Then $\partial Y$ consists of four circles. As has been previously observed (e.g. Lemma \ref{lemma_loc_triv}), since $A$ is incompressible in $\Sigma \times I$ and $F$ is incompressible in $E(K')$, any component $C \subset A \cap F$ that is inessential and innermost in $Y$ can be removed by isotopy. As this contradicts the hypothesis on $A$, $A \cap F$ can have no components that are inessential in $Y$.

$(\dagger)$ Suppose that $C$ is $\partial$-parallel in both $A$ and $Y$ (see Figure \ref{fig_lemma_lemma_2}). Then choose $C$ to be outermost, so that it cobounds an annulus $\alpha \subset A$ with a component of $\partial A$. Since $A \cap F$ has no components inessential in $A$, $\alpha \cap \partial B'=C$. By Lemma \ref{lemma_min_genus_preserved}, $K'$ is minimal. Since $A$ is incompressible, Lemma \ref{lemma_experiment_2} implies (as in the proof of Lemma \ref{lemma_min_genus_preserved}) the contradiction that $C$ is an essential curve on $\partial B'=S^2$. Thus there can be no $C$ that are $\partial$-parallel in both $A$ and $Y$.

$(\dagger\dagger)$ Suppose that $C$ is $\partial$-parallel in $Y$ but not $\partial$-parallel in $A$. Then $C$ is inessential in $A$ and $C$ bounds a disc $D \subset A$ intersecting $K'$ in one point. Take $C$ to be innermost in $A$. Note that $C$ must be essential in $F$ for otherwise $C$ would bound a disc $D'$ in $F$ and the $2$-sphere $D \cup D'$ in $\Sigma \times I$ intersects $K'$ in only one point. This contradicts the fact that $\Sigma\times I$ is irreducible. Now, since $C$ is essential in $F$, it bounds a disc $D'$ in $\partial B'$ intersecting $K'$ in one or two points. As argued in Lemma \ref{lemma_loc_triv}, the irreducibility of $\Sigma \times I$ forbids the $2$-sphere $D \cup D'$ from intersecting $K'$ in 3 points. Hence, $D'$ cannot intersect $K'$ in two points. Then $D'$ intersects $K'$ once and it follows that $D \cup D'$ is a splitting-$S^2$ for $K'$. Since $K'$ is locally trivial, such curves $C$ may be removed by isotopy. This contradicts the minimality of $A$.

Suppose that $C$ is $\partial$-parallel in neither $Y$ nor $A$. Then $C$ bounds a disc $D$ in $A$ intersecting $K'$ in two points. By the previous considerations, if there is one such $C$, all components of $A \cap F$ must be of this type. Suppose $C$ is outermost in $A$. Note that $C$ bounds a disc $D' \subset \partial B'$ that intersects $K'$ in zero, one, or two points. Consider the annulus $A'=(A \smallsetminus D) \cup D'$. If $D'\cap K'=\emptyset$, then $A$ is a destabilizing annulus for $K$. Since $K\subset \Sigma \times I$ is a minimal representative, $A'$ must be compressible. Thus, each component of $\partial A=\partial A'$ must bound a disc in $\partial (\Sigma \times I)$. This implies the contradiction that $A$ is compressible. If $D'$ intersects $K'$ once, then $A'$ and hence $A$ cannot be separating in $\Sigma \times I$. If $D'$ intersects $K'$ twice, then $A'$ is a decomposition of $K$. Since $K$ admits no annular decompositions along incompressible annuli, $A'$ and hence $A$ must be compressible.

Lastly, suppose that $C$ is $\partial$-parallel in $A$ but not in $Y$. By the above, if there is such a $C$ then all components of $A \cap F$ must be of this type. This case can be shown to be impossible using the same argument as in paragraph $(\dagger)$. Thus $A \cap F=\emptyset$ and it follows as before that $K'$ admits no incompressible decomposing annuli.
\end{proof}

\begin{proof}[Proof of Lemma \ref{lemma_no_annular} (3)] Let $F,E(r),E(t'), E(K')$ be as in the proof of Lemma \ref{lemma_no_annular} (2). As discussed in the proof of Lemma \ref{lemma_no_annular} (2), $F$ is incompressible in $E(K')$. Let $A=A_1 \sqcup A_2$ be a special decomposition of $K'$. If $A \cap \partial B'=\emptyset$, then $A$ is a special decomposition of $K$. This possibility is prohibited by hypothesis. It will be argued that the general case may always be reduced to the case that $A \cap \partial B'=\emptyset$. By Lemma \ref{lemma_special_incompress}, $A_1$ and $A_2$ must be incompressible in $\Sigma \times I$. Suppose that $A$ is chosen to minimize the number of connected components of $A \cap F$ over all special decompositions of $K'$ isotopic to $(A,A \cap K')$. 

Since each $A_i$ is incompressible, the various types of components $C \subset A \cap \partial B'$ may be eliminated as in the proof of Lemma \ref{lemma_no_annular} (2). For $i=1,2$, let $Y_i$ denote the space obtained by deleting small open disc neighborhoods of $A_i \cap K'$ from $A_i$ that are disjoint from $F$. Define $Y=Y_1 \cup Y_2$. Since $A_i$ is incompressible in $\Sigma \times I$ and $F$ is incompressible in $E(K')$, an innermost circle argument may be used, as usual, to show that there are no components of $A \cap F$ that are inessential in some $Y_i$. By an argument similar to that of paragraph $(\dagger)$ there can be no component $C \subset A \cap F$ that is $\partial$-parallel in both $A_i$ and $Y_i$ for some $i$. Furthermore, there can be no components $C \subset A \cap F$ that are $\partial$-parallel in $Y$ but not in $A$. This follows exactly as in paragraph $(\dagger\dagger)$. Thus, the general case reduces to the case that $A\cap F=A\cap \partial B'=\emptyset$ and $A$ is a special decomposition of $K$. Therefore, $K'$ admits no special decomopositions.
\end{proof}

\subsection{Proof of Theorem \ref{thm_B}} \label{sec_proof_2} If $\upsilon$ is classical, let $K \subset S^3$ be a classical knot equivalent to $\upsilon$. By Myer's theorem \cite{myers}, $K$ is concordant to a hyperbolic knot $K'' \subset S^3$. Then $K''$ is locally trivial in $S^3$ and hence corresponds to a prime virtual knot in $S^2 \times I$. Let $\upsilon''$ be the virtual knot corresponding to $K''$. Then $\upsilon''$ and $\upsilon$ are also concordant as virtual knots and $\upsilon''$ is a hyperbolic virtual knot by Definition \ref{defn_hyp}. 

\begin{figure}[htb]
\begin{tabular}{ccc} \begin{tabular}{c}\def\svgwidth{1.75in}
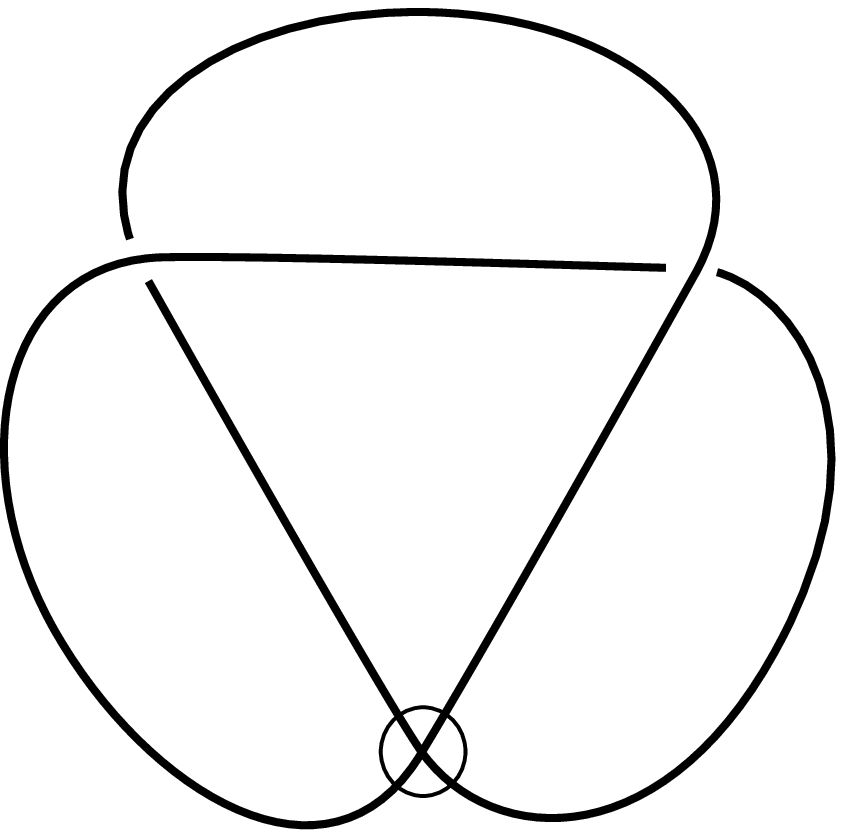 \end{tabular} &\begin{tabular}{c}\def\svgwidth{1.75in}
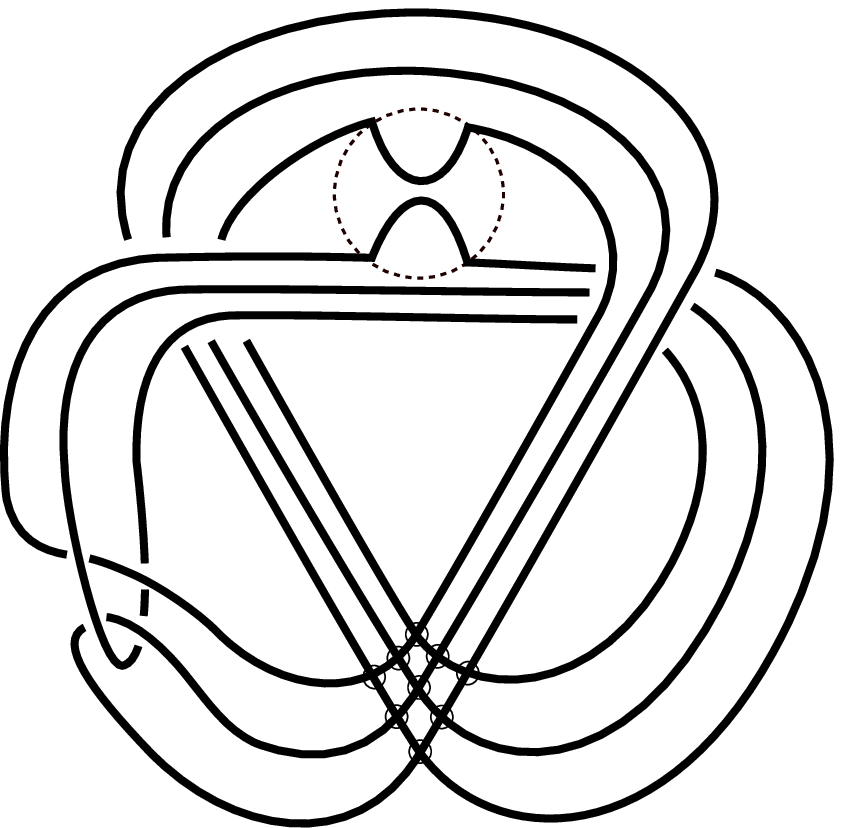 \end{tabular}
&
\begin{tabular}{c}\def\svgwidth{1.75in}
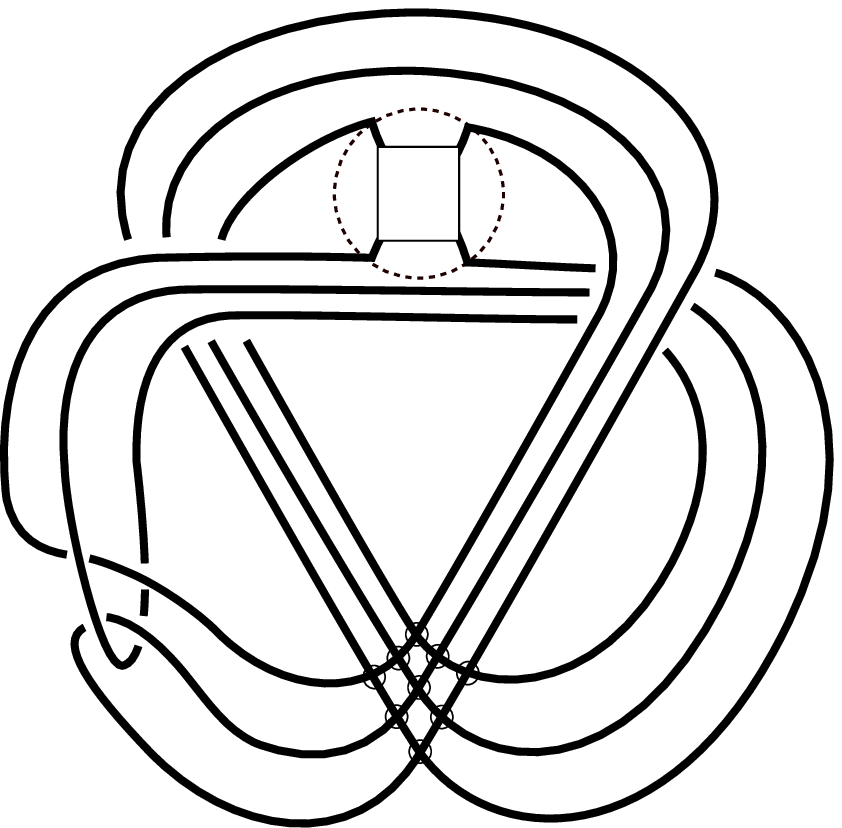 \end{tabular} \\
$\upsilon$ & $\upsilon'$ & $\upsilon''$
\end{tabular}
\caption{Schematic diagram for the proof of Theorem \ref{thm_B}.} \label{fig_outline}
\end{figure}

Suppose then that $\upsilon$ is non-classical and let $K \subset \Sigma \times I$ be a minimal representative of $\upsilon$. By Theorem \ref{thm_A}, $K$ is concordant to a satellite knot $K'\subset \Sigma \times I$ having the same virtual genus as $K$. The pattern $P$ may be assumed to have wrapping number greater than one. Then Lemma \ref{lemma_lemma} implies that $K'$ is locally trivial, admits only compressible annular decompositions, and admits no special decompositions. Theorem \ref{thm_B} will be proved by modifying $K'$ to a knot $K''\subset \Sigma \times I$ that is prime, hyperbolic, and has the same concordance class as $K$. The modification is illustrated schematically with virtual knot diagrams in Figure \ref{fig_outline}, where $K$, $K'$, and $K''$ are representatives of $\upsilon$, $\upsilon'$, and $\upsilon''$, respectively. The reason for the intermediate step from $K$ to $K'$ is so that $K''$ will admit no special decompositions. First it is shown there is a trivial tangle $(B,t)$ intersecting $K'$ such that $K'\ominus_B t$ is simple and Haken. To do this, we will use the following theorem of Myers.

\begin{lemma}[Myers \cite{myers}, Proposition 6.1] \label{thm_myers_tunnels} Let $M$ be a compact, connected, orientable, $3$-manifold such that $\partial M \ne \emptyset$ and contains no $2$-spheres. Let $Y$ be a component of $\partial M$. Then $M$ contains a properly embedded arc $\tilde{J}$ such that $\partial \tilde{J} \subset Y$ and the exterior of $\tilde{J}$ is a simple Haken manifold.
\end{lemma}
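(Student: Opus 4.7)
The plan is to build $\tilde{J}$ by starting from a trivial unknotted arc with endpoints in $Y$ and then modifying it inside a carefully chosen collection of $3$-balls so that the resulting exterior is simple and Haken. First I would choose a properly embedded unknotted arc $J_0$ in $M$ with $\partial J_0 \subset Y$, for instance by taking $J_0$ inside a collar of $Y$. Since $\partial M$ contains no $2$-spheres, the boundary of $E_0 = M \setminus \mathring N(J_0)$ also contains no $2$-spheres, so $E_0$ is automatically Haken, and this Haken property will persist for the exterior of any arc obtained from $J_0$ by modifications in the interior of $M$.

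The next step is a local tangle replacement: choose a $3$-ball $B \subset \mathring M$ meeting $J_0$ in an unknotted subarc $t_0$ and replace the trivial $1$-string tangle $(B, t_0)$ with a $1$-string tangle $(B, t)$ whose exterior $E_2 = B \setminus \mathring N(t)$, together with the four-holed sphere $F = \partial B \setminus \mathring N(\partial t)$, forms a pair $[E_2, F]$ having Property~C$\,'$. Such ``excellent'' $1$-string tangles in $B^3$ can be assembled from prime atoroidal $2$-string tangles such as the Kinoshita-Terasaka tangle of Example~\ref{example_KT} by identifying one pair of endpoints with a trivial arc. Setting $\tilde J = (J_0 \setminus t_0) \cup t$ and applying the gluing lemma (Lemma~\ref{lemma_myers_2p5}), it then suffices to check that the complementary piece $[E_1, F]$, where $E_1 = E_0 \setminus \mathring B$, has Property~B$\,'$.

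Irreducibility of the pair $[E_1, F]$ and incompressibility of $\overline{\partial E_1 \setminus F}$ should follow from irreducibility of $M$ together with the triviality of $J_0$, by first reducing to the irreducible case through a maximal system of reducing spheres (permissible since $\partial M$ has no $2$-spheres) and then applying standard innermost-disc arguments. The serious obstacles are atoroidality and anannularity of $E_1$: any essential torus or essential annulus in $M$ disjoint from $J_0 \cup B$ descends to an essential surface in $E_1$ and must be destroyed. To kill these, I would iterate the tangle-replacement procedure along a finite disjoint family $B_1, \ldots, B_k$ of $3$-balls arranged so that every essential torus and every essential annulus in $M$ meets some $\partial B_i$ transversely in essential curves; finiteness up to isotopy is guaranteed by the JSJ decomposition of $M$.

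The main obstacle, and the heart of the argument, is to verify that after this iterated replacement no new essential tori or annuli are introduced by the gluing. For this I would adapt the innermost-disc / outermost-arc analysis used throughout Sections~\ref{sec_sat} and~\ref{sec_hyp}: given an incompressible torus or annulus $T \subset E(\tilde J)$, minimize the number of components of $T \cap \partial B_i$; inessential intersection circles are removed by irreducibility and incompressibility of $F$, and outermost $\partial$-parallel annuli in $T \cap E_2$ would contradict Property~C$\,'$ of the excellent tangle exterior. The remaining case, $T \cap \bigcup_i \partial B_i = \emptyset$, places $T$ entirely in $E_1$ and hence in $M$, where by construction it must meet some $\partial B_j$, a contradiction. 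Hence every such $T$ is $\partial$-parallel, yielding atoroidality and anannularity of $E(\tilde J)$ and completing the proof.
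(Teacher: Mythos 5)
First, a point of comparison: the paper does not prove this statement at all --- it is quoted verbatim from Myers (\cite{myers}, Proposition 6.1), just as the gluing machinery of Properties A, B$\,'$, C$\,'$ and Lemma \ref{lemma_myers_2p5} is imported. So there is no in-paper argument to measure your sketch against; it has to stand on its own as a reconstruction of Myers' proof, and as written it does not.

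The central gap is at the first step. The exterior $E_0$ of a boundary-parallel arc is not ``automatically Haken'': by the paper's definition Haken requires irreducibility and boundary irreducibility, and the hypotheses give you neither --- $M$ is not assumed irreducible, and $\partial M$ may be compressible. For $M$ a genus-two handlebody, $E_0$ is a genus-three handlebody, which is not Haken, and no amount of ``persistence'' fixes this; the whole content of the proposition is that the arc must be routed so as to \emph{create} irreducibility and boundary irreducibility. Concretely, your complementary pair $[E_1,F]$ fails Property B$\,'$, since Property A requires $\overline{\partial E_1\smallsetminus F}\supset\partial M$ to be incompressible; hence Lemma \ref{lemma_myers_2p5} does not apply to your decomposition. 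Worse, a compressing disc of $\partial M$ or an essential sphere in $M$ that is disjoint from your balls $B_i$ survives untouched in $E(\tilde J)$, so no finite collection of local tangle replacements chosen only to intercept tori and annuli can work: the construction must also destroy all reducing spheres and boundary compressions, which is a global problem. Two further concrete errors: for a $1$-string tangle the surface $F=\partial B\smallsetminus \mathring N(\partial t)$ is a two-holed sphere, i.e.\ an annulus, not a four-holed sphere, and Properties B$\,'$ and C$\,'$ explicitly forbid annular components of $F$, so ``a $1$-string tangle whose exterior pair has Property C$\,'$'' is not a meaningful object in this framework; and the finiteness you invoke is false --- the JSJ decomposition yields a canonical finite family of tori, but a $3$-manifold can contain infinitely many isotopy classes of essential tori and annuli (e.g.\ vertical ones in a Seifert-fibred piece), so you cannot choose finitely many balls meeting all of them. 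Myers' actual argument avoids all of this by decomposing $M$ globally (via a handle structure/hierarchy) and building the arc inductively so that every piece of the decomposition satisfies the appropriate Property A/B$\,'$/C$\,'$; reproducing that global structure, rather than a single local modification of a trivial arc, is what a correct proof requires.
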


Let $E(K')$ be the exterior of $K'$ and $Y=\partial \overline{\Sigma \times I \smallsetminus E(K')}$. Let $t_1,t_2$ be disjoint arcs of $K'$ and let $N_1,N_2$ be regular neighborhoods of $t_1,t_2$ that intersect $Y$ in disjoint annuli $A_1,A_2$. Choose $\tilde{J}$ as in Lemma \ref{thm_myers_tunnels}, so that one point of $\partial \tilde{J}$ lies in $A_1$ and the other lies in $A_2$. Let $\tilde{N}$ be a regular neighborhood of $\tilde{J}$ intersecting each $A_i$ in a disc in $Y$. Let $B=N_1 \cup N_2 \cup \tilde{N}$ and $t=t_1 \sqcup t_2$. Then $(B,t)$ intersects $K'$ and the complementary tangle $K'\ominus_B t$ has simple and Haken exterior. To see that $(B,t)$ is trivial, observe that each arc $t_i \subset N_i$ is trivial and that $t_1$  is separated from $t_2$ by the disc $\tilde{N} \cap N_1$ (cf. \cite{myers}). 

Let $(B',t')$ be the Kinoshita-Terasaka tangle $KT$ (see Example \ref{example_KT}) and let $K''$ be a knot $(K'\ominus_B t)\oplus_{B'} t'$. Since $(B',t')$ is prime and atoroidal and the exterior of $K'\ominus_B t$ is simple and Haken, Lemma \ref{lemma_sum_hyperbolic} implies that $K''$ has simple and Haken exterior. Since $K'$ is locally trivial, Lemma \ref{lemma_comp_prime} implies $K'\ominus_B t$ is an irreducible and locally trivial complementary tangle. Since $K'$ is a minimal representative, Lemma \ref{lemma_min_genus_preserved} implies that $K''$ is also minimal.  By Lemma \ref{lemma_no_annular} (1) and (2), $K''$ is locally trivial and admits only compressible annular decompositions. Since $K'$ admits no special decompositions, Lemma \ref{lemma_no_annular} (3) implies that $K''$ admits no special decompositions. By Theorem \ref{thm_prime}, $K''$ represents a prime virtual knot $\upsilon''$. Since $K''$ has simple and Haken exterior, $\upsilon''$ has a hyperbolic representative and is thus a hyperbolic virtual knot. Lastly, it must be shown that $K''$ is concordant to $K'$. Since $(B,t)$ is trivial, it is sufficient to prove that $KT$ is concordant to the trivial $2$-string tangle. Figure \ref{fig_kt_movie} is a movie showing such a concordance.

\begin{figure}[htb]
\begin{tabular}{c} \def\svgwidth{4in}
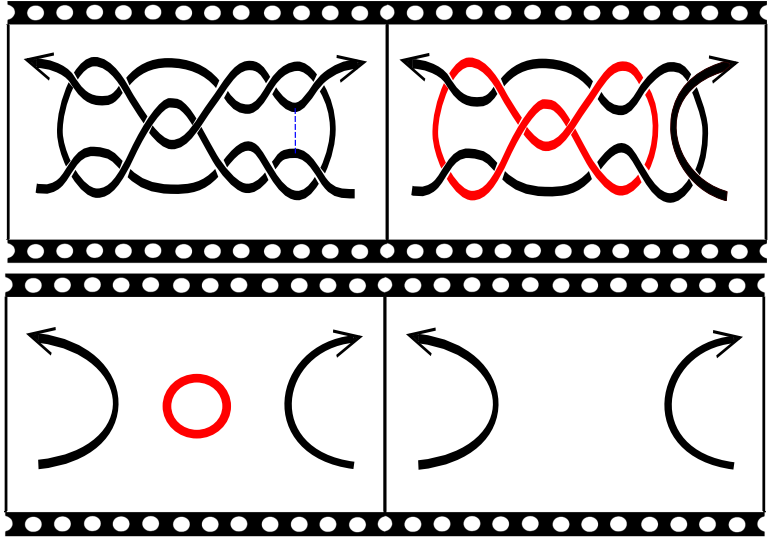 \end{tabular}
\caption{A concordance between the Kinoshita-Terasaka tangle and the trivial tangle.} \label{fig_kt_movie}
\end{figure}

\section{Preservation of the Alexander polynomial} \label{sec_alex}

This section proves Theorem \ref{thm_C}, that every almost classical knot $\upsilon$ is concordant to a prime satellite AC knot $\upsilon'$ and a prime hyperbolic AC knot $\upsilon''$, both of which have the same Alexander polynomial as $\upsilon$. Section \ref{sec_AC_review} reviews the essentials of AC knots. The proof of Theorem \ref{thm_C} takes place in Section \ref{sec_proof_complete}.

\subsection{Almost classical (AC) links} \label{sec_AC_review} A virtual link is said to be \emph{almost classical (AC)} if it has a representative $L \subset \Sigma \times I$ that is homologically trivial in $\Sigma \times I$. Almost classical knots were orginally defined by Silver-Williams in \cite{silwill} as virtual knots possessing an Alexander numerable diagram. This definition is equivalent to the one used here (Boden et al. \cite{acpaper}, Proposition 6.1). Another equivalent condition is that $K$ bounds a Seifert surface in some thickened surface $\Sigma \times I$. For a proof, and for other equivalent conditions defining AC knots and links, the reader is referred to \cite{acpaper}, Section 6. 

Since AC knots have representatives that bound Seifert surfaces, Alexander polynomials of AC knots may be defined in the same way as for classical knots. Let $F$ be a Seifert surface of genus $g$ for $L$ and let $\{a_1,\ldots,a_{2g}\}$ be a basis for $H_1(F)$. The $\pm$-Seifert matrices for $F$ are the $2g \times 2g$ matrices $V^{\pm}$ defined by $V^{\pm}=(\lk_{\Sigma}(a^{\pm}_i,a_i))$, where $a_i^{\pm}$ means the $\pm$-push-off of $a_i \subset F$. Unlike the classical case, $V^+$ and $V^-$ are generally not transposes of one another. Nonetheless, the Alexander polynomial can be defined as:
\[
\Delta_{L}(t)=\det(tV^{-}-V^+).
\]
In \cite{acpaper}, Corollary 7.3, Boden et al. proved that $\Delta_L(t)$ is an invariant of AC links that is well-defined up to multiplication by units in $\mathbb{Z}[t,t^{-1}]$.

The Alexander polynomial $\Delta_L(t)$ of an AC link $L$ is related to its group $G_L$ as follows. Recall that the group of the link is given by a presentation of the form:
\[
G_L=\left<g_1,\ldots,g_n \left| g_{i+1}=g_j^{\varepsilon_i} g_i g_j^{-\varepsilon_i}, i=1,\ldots,n\right.\right>.
\]
The presentation can be read off a virtual knot diagram or Gauss diagram, where the generators $g_i$ correspond to arcs between classical under-crossings and there is one relation at each crossing. In the special case of AC links, the first elementary ideal of the Alexander module of $G_L$ is principal and generated by $\Delta_L(t)$ (see \cite{acpaper}, Corollary 7.3).

The Alexander polynomial of AC links satisfies a skein relation. Suppose that $L_+,L_-,L_0$ is a triple of link diagrams on a surface $\Sigma$ representing homologically trivial links in $\Sigma \times I$ and that the diagrams differ only in small ball $B$ in $\Sigma$ as in Figure \ref{fig_triple}. Each of the knots bounds a Seifert surface in $\Sigma \times I$ and the triple of Seifert surfaces $F_+,F_-,F_0$ may be chosen so that they are identical outside of $B$ and inside of $B$ they appear as Figure \ref{fig_triple}. If the triple $F_+,F_-,F_0$ is used to compute $\Delta_{L_+}(t)$, $\Delta_{L_-}(t)$, $\Delta_{L_0}(t)$, then the usual skein relation is satisfied (see \cite{acpaper}, Theorem 7.11):

\begin{eqnarray}\label{skein}
\Delta_{L_+}(t)-\Delta_{L_-}(t)&=&(t-t^{-1})\Delta_{L_0}(t).
\end{eqnarray}

\begin{figure}[htb]
\begin{tabular}{ccc}   
\begin{tabular}{c} \def\svgwidth{.75in}
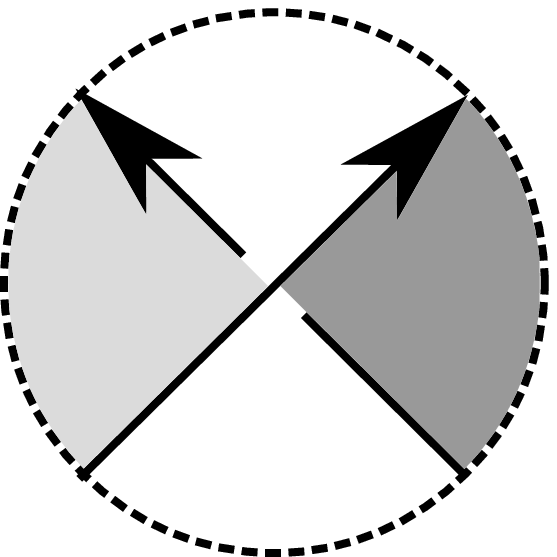 \\ $L_+=\partial F_+$ \end{tabular} & \begin{tabular}{c} \def\svgwidth{.75in}
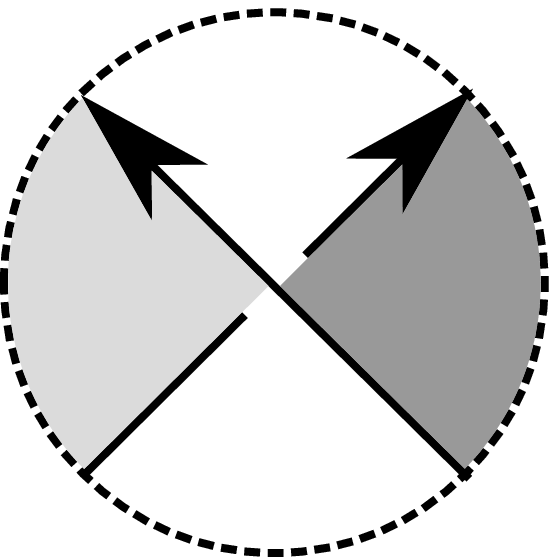 \\ $L_-=\partial F_-$ \end{tabular} & \begin{tabular}{c} \def\svgwidth{.75in}
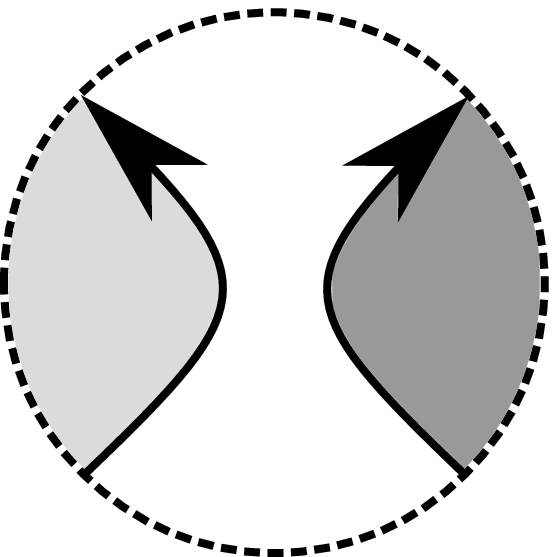 \\ $L_0=\partial F_0$ \end{tabular}
\end{tabular}
\caption{The triple of links $(L_+,L_-,L_0)$ and Seifert surfaces $(F_+,F_-,F_0)$.} \label{fig_triple}
\end{figure} 

A classical link where all the components bound disjoint Seifert surfaces is called a \emph{boundary link}. A link $L \subset \Sigma \times I$ will similarly be called a \emph{boundary link} if the components of $L$ bound pairwise disjoint Seifert surfaces in $\Sigma \times I$. If a virtual link has a representative in some $\Sigma \times I$ that is a boundary link in $\Sigma \times I$, then it will be called a \emph{boundary virtual link}. The set of boundary virtual links forms a subset of AC links.   

For a classical boundary $m$-component link $L$, the longitudes lie in the infinite intersection of the lower central series of the group $G_L$. This implies that the $(m-1)$-st elementary ideal of a boundary link is vanishing (see e.g. Hillman \cite{hill}). Since the lower central series quotients of a link group are concordance invariants (Stallings \cite{stallings_central}, Casson \cite{casson}), any link that is concordant to a boundary link also has vanishing $(m-1)$-st elementary ideal. In \cite{bbc}, this result was generalized to virtual links, where it was used to prove that the generalized Alexander polynomial of a slice virtual knot is vanishing. 

\begin{theorem}[see \cite{bbc}, Theorem 4.13] \label{lemma_bbc} If an $m$-component virtual link $\lambda$ is concordant to a boundary virtual link, then its $(m-1)$-st elementary ideal is trivial.
\end{theorem}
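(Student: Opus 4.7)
The plan is to adapt the classical chain Stallings--Casson--Hillman to the virtual setting, following the outline sketched in the paragraph immediately above the statement. Fix a representative $L \subset \Sigma \times I$ of $\lambda$ and, by hypothesis, a boundary representative $L' \subset \Sigma' \times I$ of a concordant boundary virtual link whose components $L'_1,\ldots,L'_m$ bound pairwise disjoint Seifert surfaces $F_1, \ldots, F_m$ in $\Sigma' \times I$. Write $G = \pi_1((\Sigma \times I) \smallsetminus L)$ and $G' = \pi_1((\Sigma' \times I) \smallsetminus L')$, with meridians $\mu_i, \mu_i'$ and longitudes $\ell_i, \ell_i'$, and let $G_n$, $G'_n$ denote the $n$-th terms of their lower central series.

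The first substantive step is the boundary-link input. Since each $F_i$ is a properly embedded orientable surface in $\Sigma' \times I$ disjoint from the other $F_j$'s, the longitude $\ell_i'$ bounds in the complement of $L' \smallsetminus L'_i$. An induction on a bicollar neighborhood of $F_i$ along the lines of Stallings \cite{stallings_central} then shows $\ell_i' \in G'_n$ for every $n$. This argument is internal to a collar of $F_i$ and uses no global feature of the ambient $3$-sphere, so it transfers to the thickened-surface setting without modification.

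The second step, and the main obstacle, is to upgrade Casson's concordance invariance of nilpotent quotients to virtual concordance. Let $(W \times I, A)$ be the virtual concordance, with $\partial W = \Sigma' \sqcup -\Sigma$ and $A$ a disjoint union of $m$ annuli satisfying $\partial A = L' \sqcup -L$, and set $X = (W \times I) \smallsetminus A$. Each annular component of $A$ contributes a single meridional class to $H_1(X)$, so the inclusions of the two ends induce $H_1$-epimorphisms onto $H_1(X)$ with kernels matching the meridional classes $\mu_i, \mu_i'$. Stallings' theorem then yields meridian-preserving isomorphisms $G/G_n \cong \pi_1(X)/\pi_1(X)_n \cong G'/G'_n$ for every $n$, and in particular $\ell_i \in \bigcap_n G_n$. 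The delicate point is that $W$ is an arbitrary compact oriented $3$-manifold, so one must carefully track the extra $H_1$ arising from the topology of $W$ itself and verify that it does not obstruct Stallings' argument; this is where both the annular structure of $A$ and the $3$-dimensionality of $W$ become essential.

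The third step is the Crowell--Hillman translation \cite{hill}: once the longitudes $\ell_i$ all lie in $\bigcap_n G_n$, a Fox-calculus computation on a Wirtinger-style presentation of $G$ shows that every $(m-1) \times (m-1)$ minor of the Alexander matrix lies in the intersection of all powers of the augmentation ideal of $\mathbb{Z}[t_1^{\pm 1},\ldots,t_m^{\pm 1}]$, which vanishes by Krull's intersection theorem. Hence $E_{m-1}(G) = 0$, as claimed.
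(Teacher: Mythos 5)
The paper does not actually prove this statement---it is imported wholesale from \cite{bbc}, Theorem 4.13, and the paragraph preceding it only sketches the classical chain (Stallings--Casson--Hillman) that \cite{bbc} generalizes. Your proposal follows that same skeleton, which is the correct one, but at each step the virtual-specific difficulty is precisely the point you name without resolving, and as written two of your steps fail.

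First, you work throughout with $G=\pi_1((\Sigma\times I)\smallsetminus L)$. The elementary ideals in the statement are those of the virtual link group---the Wirtinger-type presentation displayed in Section \ref{sec_AC_review}---which is the quotient of $\pi_1((\Sigma\times I)\smallsetminus L)$ by the normal closure of $\pi_1(\Sigma\times\{1\})$. For $\Sigma\ne S^2$ these groups differ, and only in the quotient is $H_1$ freely generated by meridians (this is exactly what the isomorphism $H_1(\Sigma\times I\smallsetminus J,\Sigma\times\{1\})\cong\mathbb{Z}$ in Section \ref{sec_sat_meridians} records). Your Step 1 claim that the boundary-link argument ``transfers without modification,'' and your final Fox-calculus step on a ``Wirtinger-style presentation of $G$,'' both implicitly require $H_1(G)\cong\mathbb{Z}^m$ on meridians, which is false for the complement group when $\Sigma\ne S^2$: the Pontryagin--Thom retraction onto $F_m$ still exists, but Stallings' theorem no longer identifies $G'/G'_n$ with $F_m/(F_m)_n$, so $\ker r\subset G'_n$ does not follow.

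Second, the step that genuinely fails: the inclusions of the two ends into $X=(W\times I)\smallsetminus A$ do \emph{not} induce $H_1$-epimorphisms in general, because $H_1(W)$ need not be carried by either boundary component (take $W=(\Sigma\times I)\,\#\,(S^1\times S^2)$, for instance). So the hypotheses of Stallings' theorem are simply not met for the complement groups, and you also never address its $H_2$-surjectivity hypothesis, which is likewise threatened by closed surfaces in $W$. You flag this as ``the delicate point'' but offer no mechanism for it. The mechanism---passing to the quotients of all three groups by the normal closures of $\pi_1$ of one boundary piece of each thickening, so that meridians generate $H_1$ everywhere and both Stallings hypotheses can be verified---is the actual content of the generalization carried out in \cite{bbc}. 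Until that is supplied, the proposal restates the classical proof while naming, rather than overcoming, the new obstructions.
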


\subsection{Proof of Theorem \ref{thm_C}} \label{sec_proof_complete} Let $\upsilon$ be an AC knot. By Theorem \ref{thm_A}, $\upsilon$ is concordant to a prime satellite knot $\upsilon'$. By Theorem \ref{thm_B}, $\upsilon$ is concordant to a prime hyperbolic knot. To prove Theorem \ref{thm_C}, it is sufficient to prove that $\upsilon',\upsilon''$ may be chosen to have the same Alexander polynomial as $\upsilon$. The knot $\upsilon''$ is obtained from $\upsilon'$ by tangle operations, so first it is shown that $\upsilon'$ is AC and has the same Alexander polynomial as $\upsilon$. Then it is shown that $\upsilon''$ is AC and has the same Alexander polynomial as $\upsilon'$. First observe that a satellite of an AC knot is always AC. This is proved in the following lemma.

\begin{lemma} Let $K'$ be a satellite with pattern $P\subset D^2 \times S^1$ and companion $K\subset \Sigma \times I$. If $[K]=0 \in H_1(\Sigma\times I)$ or $[P]=0 \in H_1(D^2 \times S^1)$, then $[K']=0 \in H_1(\Sigma\times I)$. In particular, satellite virtual knots represented with either homologically trivial pattern or companion are AC.
\end{lemma}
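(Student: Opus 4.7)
The plan is a direct homological computation that simply unpacks the defining data of the satellite construction. First I would observe that $H_1(D^2\times S^1)\cong\mathbb{Z}$ is generated by the longitude $\{0\}\times S^1$, so $[P]=w\cdot[\{0\}\times S^1]$, where $w$ is the winding number of $P$; in particular the hypothesis $[P]=0$ is equivalent to $w=0$.

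Next I would invoke the definition of the satellite: $K'=\psi(P)$, where $\psi:D^2\times S^1\to N(K)$ is a diffeomorphism sending the longitude $\{0\}\times S^1$ of the solid torus to a longitude $\ell$ of $N(K)$. Applying the induced isomorphism on $H_1$, one gets $[K']=\psi_*[P]=w[\ell]$ in $H_1(N(K))$. By Lemma \ref{lemma_long}, $\ell$ and $K$ cobound an annulus inside $N(K)$, which realizes $[\ell]=[K]$ in $H_1(N(K))$ and hence in $H_1(\Sigma\times I)$ under the inclusion $N(K)\hookrightarrow\Sigma\times I$. Combining these two identifications yields the key formula
\[
[K']=w\,[K]\in H_1(\Sigma\times I).
\]

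The two conclusions drop out immediately. If either $[K]=0$ in $H_1(\Sigma\times I)$ or $[P]=0$ in $H_1(D^2\times S^1)$ (equivalently $w=0$), then $[K']=0$. The ``in particular'' assertion then follows from the characterization of AC knots recalled at the start of Section \ref{sec_AC_review}: a virtual knot is AC exactly when it admits a homologically trivial representative in some thickened surface, and $K'\subset\Sigma\times I$ is such a representative.

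This is a routine unpacking of the definitions, so I do not anticipate any genuine obstacle. The only point requiring a moment's care is that $\psi$ is orientation preserving and takes oriented longitude to oriented longitude, which is built into the definition of the satellite construction in Section \ref{sec_sat_prime}; without this, the relation $[K']=w[K]$ would hold only up to sign, which is still enough for the vanishing conclusion.
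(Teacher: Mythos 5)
Your proof is correct and follows essentially the same route as the paper's: both compute $[K']=\psi_*[P]=w[K]$ (up to sign) by writing $[P]$ as a multiple of the core of the solid torus and pushing forward under the satellite embedding. The only cosmetic point is that the fact $[\ell]=[K]$ in $H_1(N(K))$ really comes from the remark preceding Lemma \ref{lemma_long} (any longitude of the solid torus $N(K)$ cobounds an annulus with $K$), rather than from Lemma \ref{lemma_long} itself, which concerns longitudes of the knot; this does not affect the argument.
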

\begin{proof} Let $N(K)$ be a regular neighborhood of $K$ and $f:D^2 \times S^1 \to N(K) \subset \Sigma \times I$ be the map defining the satellite. Let $J$ be the knot $\{0\} \times S^1$. Then $[P]=r \cdot [J] \in H_1(D^2 \times S^1)$ for some integer $r$, where $r=0$ if $P$ is homologically trivial. Now, $f_*:H_1(D^2 \times S^1) \to H_1(\Sigma\times I)$ maps $[J]$ to $\pm [K]$. This implies that $[K']=f_*([P])=\pm r[K]=0$, if either hypothesis is satisfied.
\end{proof}

Now suppose that $K \subset \Sigma \times I$ is a minimal representative of a nontrivial virtual knot $\upsilon$ and that $P$ is the pattern used in the proof of Theorem \ref{thm_A} (see Figure \ref{fig_sat_same_alex}, top left). The knot $K'$ constructed in the proof of Theorem \ref{thm_A} may be chosen as any satellite of $K$ with pattern $P$. Now, apply the skein relation of Equation (\ref{skein}) to $P$. The ball in which the relation is applied is the small green ball in $B^2 \times S^1$ depicted in Figure \ref{fig_sat_same_alex}. If the crossing is switched, the new pattern $P'$ is a knot equivalent in $B^2 \times S^1$ to $\{0\} \times S^1$ (see top of Figure \ref{fig_sat_same_alex}). The satellite knot with this pattern and companion $K$ is equivalent to $K$ in $\Sigma \times I$. On the other hand, if the crossing is smoothed, the result is a two component pattern $P_0$ in $B^2 \times I$ (see bottom of Figure \ref{fig_sat_same_alex}).  In order to show $\Delta_K(t)=\Delta_{K'}(t)$, it is sufficient to show that the satellite $L_0$ with pattern $P_0$ has Alexander polynomial $0$. By Lemma \ref{lemma_bbc}, it is sufficient to show that the satellite $L_0$ may be chosen so that $L_0$ is a boundary link. 

Let $F$ be an arbitrary Seifert surface for $K$ and let $N=N(K)$ be a regular neighborhood of $K$ such that $F \cap N$ is an annulus. Define $F'=F\smallsetminus F\cap \text{int}(N)$. Let $M\approx F' \times I$ be a regular neighborhood of $F'$ in $\Sigma \times I \smallsetminus \text{int}(N)$ that intersects $\partial N$ in a regular neighborhood of $F \cap \partial N$ in $\partial N$. Identify $F'$ with $F' \times \{1/2\}$. Then $F'\times \{\varepsilon\}$ intersects $\partial N$ in a longitude of $K$ for $0 \le \varepsilon \le 1$.  Consider the surface $F' \times \{0,1/2,1\}$ with a half twisted band attached along $F \times \{1/2,1\}$ in $N$ so that the two components of $\partial F \times \{1/2,1\}$ are joined to a single component (see Figure \ref{fig_lzero_bound}). The resulting surface $F_0'$ has two components. For an appropriately chosen map $f:B^2 \times S^1 \to \Sigma \times I$, the pattern $P_0 \to \partial F_0'$. Thus if $L_0:=f(P_0)$, it follows that $L_0$ is a boundary link in $\Sigma \times I$ and $\Delta_{L_0}(t)=0$. Setting $K'=f(P)$, we have that $K'$ is a prime satellite AC knot satisfying $\Delta_{K}(t)\doteq \Delta_{K'}(t)$. This proves the satellite case of Theorem \ref{thm_C}.

\begin{figure}[htb]
\begin{tabular}{c} \def\svgwidth{5.5in}
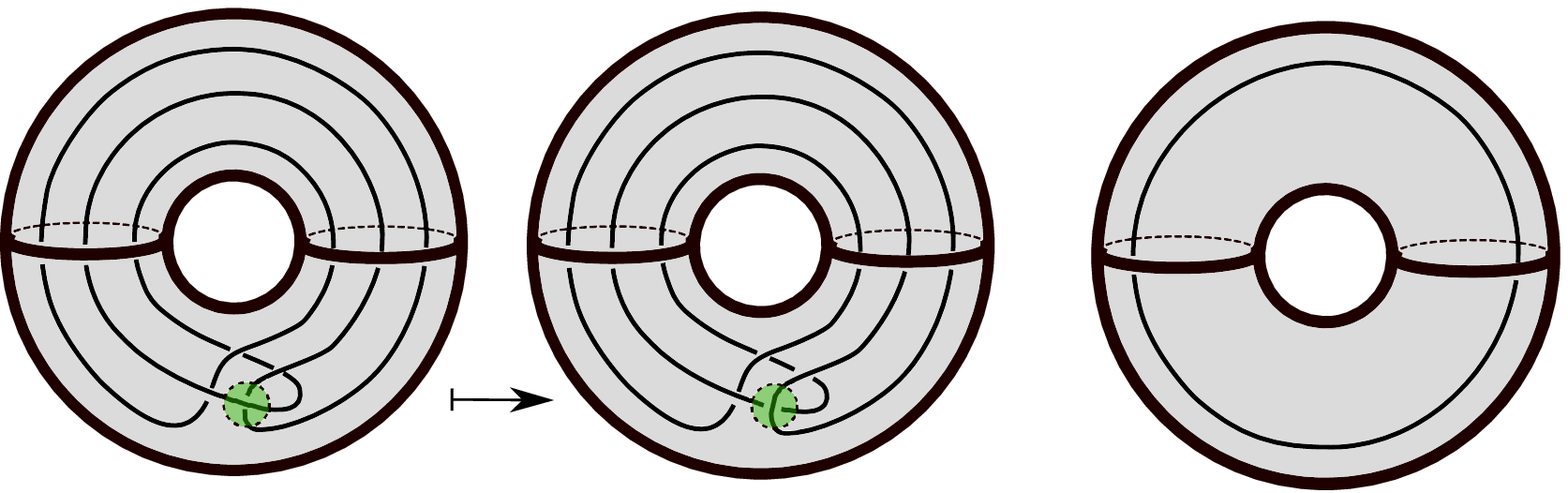\\
\def\svgwidth{5.5in} 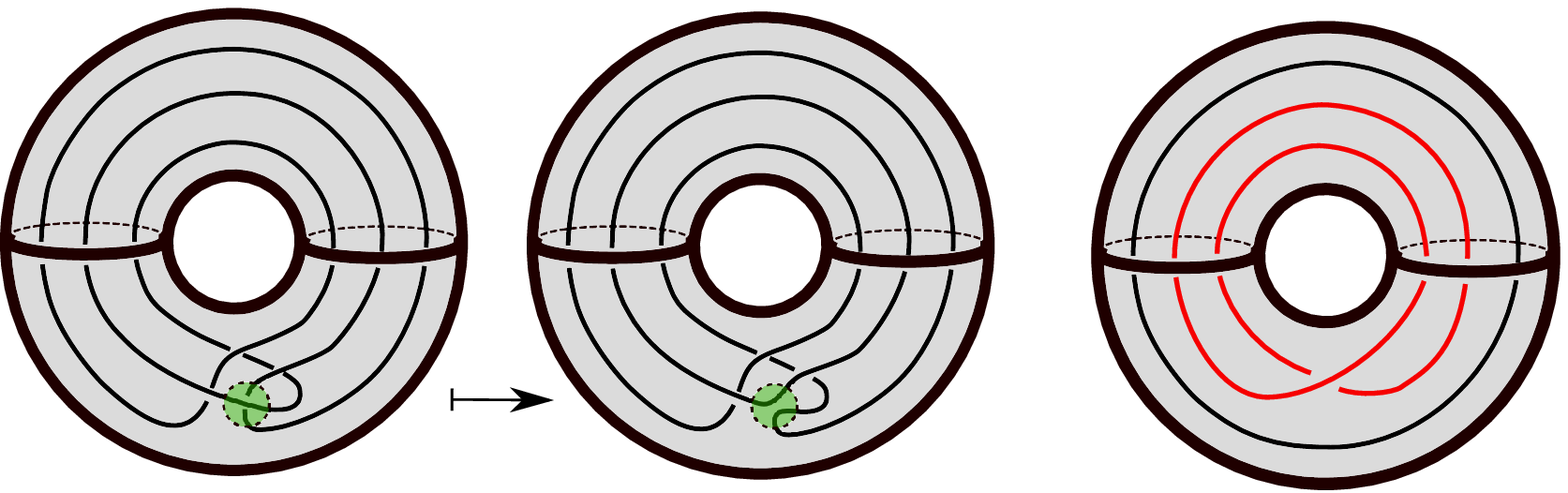 \end{tabular}
\caption{Applying the skein relation to Livingston's pattern. Switching the crossing gives a satellite knot equivalent to the companion. Smoothing the crossing gives a pattern link. A satellite with this pattern may be chosen so that it is a boundary link $L_0$ in $\Sigma \times I$ (see Figure \ref{fig_lzero_bound}).} \label{fig_sat_same_alex}
\end{figure}

\begin{remark} For classical knots, the Alexander polynomial of a satellite knot with companion $K$ and pattern $P$ is given by $\Delta_P(t)\cdot \Delta_K(t^n)$, where $n$ is the winding number of $P$ (see \cite{lick}, Theorem 6.15). H. U. Boden has informed the author that the same relation is also true for AC knots. This can be used to give an alternate proof for the satellite case of Theorem \ref{thm_C}.
\end{remark}

\begin{figure}[htb]
\begin{tabular}{c} \def\svgwidth{4in}
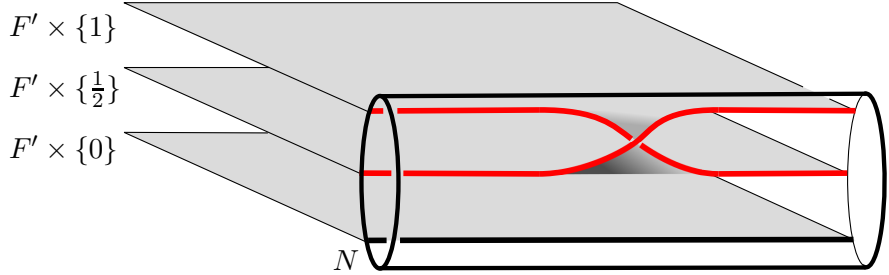 \end{tabular}
\caption{Constructing a Seifert surface for the boundary link $L_0$.} \label{fig_lzero_bound}
\end{figure}

Lastly, it must be shown that the prime hyperbolic knot $\upsilon''$ of Theorem \ref{thm_B} is AC and may be chosen to have the same Alexander polynomial as the given virtual knot $\upsilon$. Recall from the proof of Theorem \ref{thm_B} that a representative $K''\subset \Sigma \times I$ of $\upsilon''$ was given by $(K'\ominus_B t) \oplus_{B'} t'$, where $K'$ is a satellite AC knot with Livingston's pattern and companion $K$, $B$ is a $3$-ball intersecting $K'$ in an trivial 2-string tangle, and $(B',t')$ is the tangle $KT$ of Figure \ref{fig_kt}. The following lemma shows that $(K'\ominus_B t) \oplus_B' t'$ is also AC.

\begin{lemma} Suppose $J \subset \Sigma \times I$ is a homologically trivial representative of an AC knot and $(B,t)$ is any ($2$-string) tangle intersecting $J$. Let $(B',t')$ be any (2-string) tangle. Then $(J\ominus_B t) \oplus_{B'} t'$ is also AC.
\end{lemma}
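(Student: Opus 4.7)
My approach is to verify that homological triviality in $\Sigma \times I$ is preserved by tangle replacement inside a $3$-ball. Since being AC is equivalent to admitting a homologically trivial representative (as recalled in Section \ref{sec_AC_review}), the task reduces to showing that $K'' := (J \ominus_B t) \oplus_{B'} t'$, viewed as a knot in $\Sigma \times I$ via the identification of $B'$ with $B$ from Definition \ref{defn_tangle_join}, satisfies $[K''] = 0$ in $H_1(\Sigma \times I;\mathbb{Z})$.

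The key observation is that if we write $r := J \cap E_B$, then $J = r \cup t$ and $K'' = r \cup t'$ as oriented singular $1$-cycles in $\Sigma \times I$, both built on the common complementary tangle $r$. The definition of the join guarantees that under the gluing, $\partial t'_i$ and $\partial t_i$ coincide as oriented $0$-chains on $\partial B$ for $i=1,2$, since each must match the corresponding endpoints of $r_i$ with compatible orientation. Consequently $t - t'$ is a $1$-cycle supported in the $3$-ball $B$. Because $B$ is contractible, this cycle bounds a singular $2$-chain $D$ in $B$, which in particular is a $2$-chain in $\Sigma \times I$ with boundary $t - t'$. Therefore $[J] - [K''] = [t - t'] = 0$ in $H_1(\Sigma \times I;\mathbb{Z})$, and since $[J] = 0$ by hypothesis we conclude $[K''] = 0$.

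I do not anticipate a serious obstacle. The only point requiring care is the orientation bookkeeping in the join construction: one must check from Definition \ref{defn_tangle_join} that, under the identification $B' \to B$, the oriented endpoints of $t'_i$ map to those of $t_i$ in a compatible way, so that $t - t'$ really is a $1$-cycle (rather than merely a chain with the correct underlying $0$-set). Once this is verified, the homological argument is immediate, and the equivalent definition of AC knots as those admitting a homologically trivial representative finishes the proof.
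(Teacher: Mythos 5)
Your proof is correct and is essentially the paper's argument: both hinge on the fact that $J$ and $(J\ominus_B t)\oplus_{B'}t'$ agree outside the contractible ball $B$, so they represent the same class in $H_1(\Sigma\times I)$ (the paper phrases this as the two knots being homotopic embeddings, while you work at the chain level with the $1$-cycle $t-t'$ bounding in $B$). Your extra remark about checking that the oriented endpoints match so that $t-t'$ is genuinely a cycle is the right point of care, and it follows from $\partial(r+t)=\partial(r+t')=0$.
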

\begin{proof} From Definition \ref{defn_tangle_join}, it is clear that $J$ and $(J\ominus_B t) \oplus_{B'} t'$ are homotopic embeddings of $S^1$ into $\Sigma \times I$. Thus, $J$ and $(J\ominus_B t) \oplus_{B'} t'$ represent the same homology class in $H_1(\Sigma \times I)$. This implies that $(J\ominus_B t) \oplus_{B'} t'$ is null homologous whenever $J$ is null homologous.
\end{proof}

To show that $K''=(K'\ominus_B t) \oplus_{B'} t'$ has the same Alexander polynomial of $K'$ and thus of $K$, apply the skein relation in Equation (\ref{skein}) to a crossing in the tangle $KT$. This is depicted in Figure \ref{fig_hyp_same_alex}. Switching the crossing in the green ball produces a knot in $\Sigma \times I$ that is equivalent to $K'$. Smoothing the crossing gives a two component link $L_0'$. Figure \ref{fig_kt_bound} shows a concordance of $L_0'$. The link on the the bottom right of Figure \ref{fig_kt_bound} is a boundary link in $\Sigma \times I$. Indeed, it is a split two component link $L_0''$ in $\Sigma \times I$ with one component equivalent to the AC knot $K'$ and the other component a knot bounding a disc in $\Sigma \times I$. Thus $L_0''$ is a boundary link and it follows that $\Delta_{K''}(t)\doteq\Delta_{K'}(t)\doteq\Delta_{K}(t)$.

\begin{figure}[htb]
\begin{tabular}{c} \def\svgwidth{5.5in}
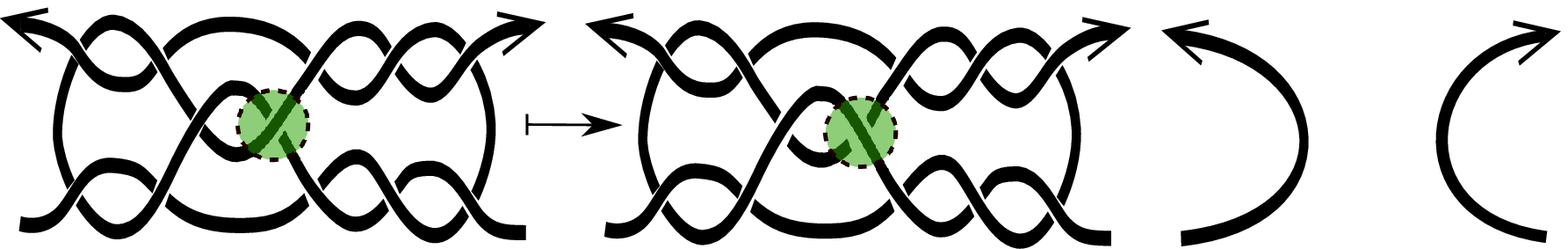\\ \\
\def\svgwidth{5.5in} 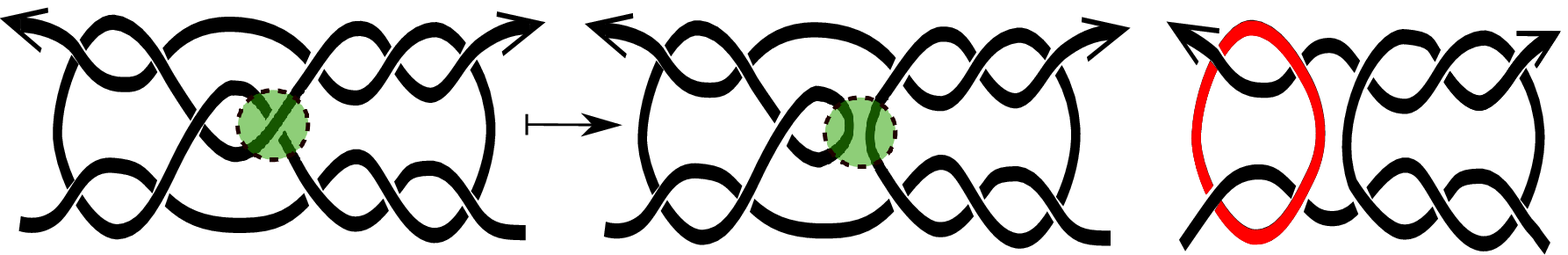 \end{tabular}
\caption{Applying the skein relation to the Kinoshita-Terasaka tangle.} \label{fig_hyp_same_alex}
\end{figure}

\begin{figure}[htb]
\begin{tabular}{c} \def\svgwidth{4.5in}
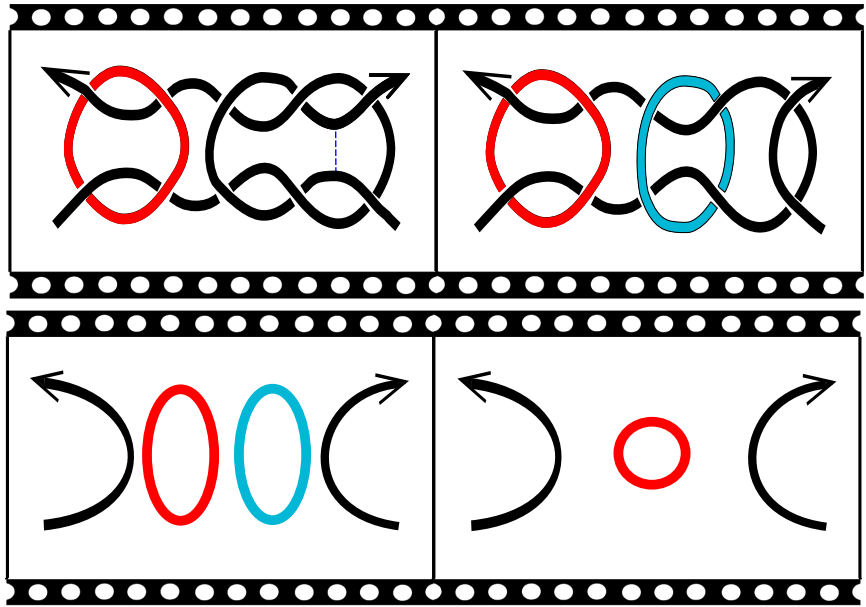 \end{tabular}
\caption{A concordance to a boundary virtual link. Apply a saddle move along the blue dotted line and then kill the blue component.} \label{fig_kt_bound}
\end{figure} 

\section{Conclusion}

We conclude with some directions for further research. Friedl-Livingston-Zentner \cite{friedl_alt} showed that there are knots in $S^3$ that are not concordant to any alternating knot. Since prime alternating non-classical virtual knots are hyperbolic, it would be useful to know whether or not every virtual concordance class contains an alternating representative. 

A possible generalization of Theorem \ref{thm_C} would be to show that every virtual knot is concordant to a prime satellite (or hyperbolic) virtual knot having the same generalized Alexander polynomial \cite{saw,silwill}. The generalized Alexander polynomial is an invariant on the full set of virtual knots and, like the Alexander polynomial of AC knots, satisfies a skein relation on the triple of diagrams $(L_+,L_-,L_0)$. However, the proof technique used in Section \ref{sec_proof_2} does not carry over. In fact, it can be shown that the knots $\upsilon$ and $\upsilon'$ in Figure \ref{fig_outline} have different generalized Alexander polynomial. 

A recent result of Myers \cite{myers_new} states that every Seifert surface (other than the disc) of a knot in $S^3$ is concordant to a Seifert surface of a hyperbolic knot having arbitrarily large volume. Friedl \cite{friedl_real} proved that every Seifert matrix of a knot in $S^3$ is $S$-equivalent to the Seifert matrix of a hyperbolic knot. To what extent do these results hold for AC knots?   

There are 1,701,936 prime classical knots up to 16 crossings (see Hoste-Thistlethwait-Weeks  \cite{hoste_first}). Of these, there are exactly 32 that are not hyperbolic. Thus, proving that a knot is hyperbolic is an effective strategy for proving that a classical knot is prime (see e.g. Thurston \cite{thurston}, Corollary 2.5). This fails for virtual knots as some hyperbolic virtual knots are not prime. There are other techniques for proving primeness for classical knots such as tangle decomposition, the knot genus, and the bridge number. Here we have used decompositions into tangles and complementary tangles to prove primeness. It would be interesting to see how often this method could be applied to prove primeness for the virtual knots in Green's table \cite{green}.

\subsection*{Acknowledgments} The author is grateful for helpful discussions with C. Adams, H. U. Boden, A. Champanerkar, I. Kofman, A. Nicas, N. Petit, and R. Todd. Thanks are also due to an anonymous reader whose comments on an earlier version of this paper led to improved exposition. This work was partially supported by funds from The Ohio State University.
\bibliographystyle{plain}
\bibliography{prime_and_sat_bib}

\begin{thebibliography}{10}

\bibitem{adams}
C.~{Adams}, C.~{Albors-Riera}, B.~{Haddock}, Z.~{Li}, D.~{Nishida}, and
  B.~{Reinoso. L. W.}
\newblock {Hyperbolicity of links in thickened surfaces}, 2018.
\newblock \href{https://arxiv.org/pdf/1802.05770.pdf}{ArXiv/1802.05770}.

\bibitem{adams_2}
C.~Adams, O.~Eisenberg, J.~Greenberg, K.~Kapoor, Z.~Liang, K.~O'Connor,
  N.~Pacheco-Tallaj, and Y.~Wang.
\newblock {TG-Hyperbolicity of virtual links}, 2019.
\newblock in preparation.

\bibitem{akimova_matveev}
A.~A. Akimova and S.~V. Matveev.
\newblock Classification of genus 1 virtual knots having at most five classical
  crossings.
\newblock {\em J. Knot Theory Ramifications}, 23(6):1450031, 19, 2014.

\bibitem{bleiler}
S.~A. Bleiler.
\newblock Realizing concordant polynomials with prime knots.
\newblock {\em Pacific J. Math.}, 100(2):249--257, 1982.

\bibitem{bbc}
H.~U. {Boden} and M.~{Chrisman}.
\newblock {Virtual concordance and the generalized Alexander polynomial}, 2019.
\newblock \href{https://arxiv.org/pdf/1903.08737.pdf}{ArXiv/1903.08737}.

\bibitem{bcg2}
H.~U. Boden, M.~Chrisman, and R.~Gaudreau.
\newblock Signature and concordance of virtual knots, 2017.
\newblock \href{https://arxiv.org/pdf/1708.08090.pdf}{ArXiv/1708.08090}, to
  appear in Indiana Univ. Math. J.

\bibitem{bcg1}
H.~U. Boden, M.~Chrisman, and R.~Gaudreau.
\newblock Virtual knot cobordism and bounding the slice genus, 2017.
\newblock \href{https://arxiv.org/pdf/1708.05982.pdf}{ArXiv/1708.05982}, to
  appear in Experiment. Math.

\bibitem{acpaper}
H.~U. Boden, R.~Gaudreau, E.~Harper, A.~J. Nicas, and L.~White.
\newblock Virtual knot groups and almost classical knots.
\newblock {\em Fund. Math.}, 238(2):101--142, 2017.

\bibitem{boden_nagel}
H.~U. Boden and M.~Nagel.
\newblock Concordance group of virtual knots.
\newblock {\em Proc. Amer. Math. Soc.}, 145(12):5451--5461, 2017.

\bibitem{carter}
J.~S. Carter.
\newblock Classifying immersed curves.
\newblock {\em Proc. Amer. Math. Soc.}, 111(1):281--287, 1991.

\bibitem{CKS}
J.~S. Carter, S.~Kamada, and M.~Saito.
\newblock Stable equivalence of knots on surfaces and virtual knot cobordisms.
\newblock {\em J. Knot Theory Ramifications}, 11(3):311--322, 2002.
\newblock Knots 2000 Korea, Vol. 1 (Yongpyong).

\bibitem{casson}
A.~J. Casson.
\newblock Link cobordism and {M}ilnor's invariant.
\newblock {\em Bull. London Math. Soc.}, 7:39--40, 1975.

\bibitem{abhijit_ilya_purcell}
A.~{Champanerkar}, I.~{Kofman}, and J.~S. {Purcell}.
\newblock {Geometry of biperiodic alternating links}, 2018.
\newblock \href{https://arxiv.org/pdf/1802.05343.pdf}{ArXiv/1802.05343}.

\bibitem{band_pass}
M.~Chrisman.
\newblock Band-passes and long virtual knot concordance.
\newblock {\em J. Knot Theory Ramifications}, 26(10):1750057, 11, 2017.

\bibitem{feustel}
C.~D. Feustel.
\newblock On the torus theorem and its applications.
\newblock {\em Trans. Amer. Math. Soc.}, 217:1--43, 1976.

\bibitem{freedman_quinn}
M.~H. Freedman and F.~Quinn.
\newblock {\em Topology of 4-manifolds}, volume~39 of {\em Princeton
  Mathematical Series}.
\newblock Princeton University Press, Princeton, NJ, 1990.

\bibitem{friedl_real}
S.~Friedl.
\newblock Realizations of {S}eifert matrices by hyperbolic knots.
\newblock {\em J. Knot Theory Ramifications}, 18(11):1471--1474, 2009.

\bibitem{friedl_alt}
S.~Friedl, C.~Livingston, and R.~Zentner.
\newblock Knot concordances and alternating knots.
\newblock {\em Michigan Math. J.}, 66(2):421--432, 2017.

\bibitem{GPV}
M.~Goussarov, M.~Polyak, and O.~Viro.
\newblock Finite-type invariants of classical and virtual knots.
\newblock {\em Topology}, 39(5):1045--1068, 2000.

\bibitem{green}
J.~Green.
\newblock A table of virtual knots.
\newblock {\em \url{http://www.math.toronto.edu/drorbn/Students/GreenJ}}, 2004.

\bibitem{hill}
J.~A. Hillman.
\newblock Alexander ideals and {C}hen groups.
\newblock {\em Bull. London Math. Soc.}, 10(1):105--110, 1978.

\bibitem{hoste_first}
J.~Hoste, M.~Thistlethwaite, and J.~Weeks.
\newblock The first 1,701,936 knots.
\newblock {\em Math. Intelligencer}, 20(4):33--48, 1998.

\bibitem{howie_purcell}
J.~A. {Howie} and J.~S. {Purcell}.
\newblock {Geometry of alternating links on surfaces}, 2017.
\newblock \href{https://arxiv.org/pdf/1712.01373.pdf}{ArXiv/1712.01373}.

\bibitem{kanenobu}
T.~Kanenobu.
\newblock Satellite links with {B}runnian properties.
\newblock {\em Arch. Math. (Basel)}, 44(4):369--372, 1985.

\bibitem{on_knots}
L.~H. Kauffman.
\newblock {\em On knots}, volume 115 of {\em Annals of Mathematics Studies}.
\newblock Princeton University Press, Princeton, NJ, 1987.

\bibitem{KaV}
L.~H. Kauffman.
\newblock Virtual knot theory.
\newblock {\em European J. Combin.}, 20(7):663--690, 1999.

\bibitem{lou_cob}
L.~H. Kauffman.
\newblock Virtual knot cobordism.
\newblock In {\em New ideas in low dimensional topology}, volume~56 of {\em
  Ser. Knots Everything}, pages 335--377. World Sci. Publ., Hackensack, NJ,
  2015.

\bibitem{kauffman_manturov}
L.~H. Kauffman and V.~O. Manturov.
\newblock Virtual knots and links.
\newblock {\em Tr. Mat. Inst. Steklova}, 252(Geom. Topol., Diskret. Geom. i
  Teor. Mnozh.):114--133, 2006.

\bibitem{kawauchi_KT}
A.~Kawauchi.
\newblock Almost identical imitations of {$(3,1)$}-dimensional manifold pairs
  and the manifold mutation.
\newblock {\em J. Austral. Math. Soc. Ser. A}, 55(1):100--115, 1993.

\bibitem{kawauchi}
A.~Kawauchi.
\newblock {\em A survey of knot theory}.
\newblock Birkh\"auser Verlag, Basel, 1996.
\newblock Translated and revised from the 1990 Japanese original by the author.

\bibitem{kirby_lick}
R.~C. Kirby and W.~B.~R. Lickorish.
\newblock Prime knots and concordance.
\newblock {\em Math. Proc. Cambridge Philos. Soc.}, 86(3):437--441, 1979.

\bibitem{korablev_matveev}
F.~G. Korablev and S.~V. Matveev.
\newblock Reduction of knots in thickened surfaces and virtual knots.
\newblock {\em Dokl. Akad. Nauk}, 437(6):748--750, 2011.

\bibitem{kuperberg}
G.~Kuperberg.
\newblock What is a virtual link?
\newblock {\em Algebraic and Geometric Topology}, 3:587--591, 2003.

\bibitem{lick}
W.~B.~R. Lickorish.
\newblock Prime knots and tangles.
\newblock {\em Trans. Amer. Math. Soc.}, 267(1):321--332, 1981.

\bibitem{livingston}
C.~Livingston.
\newblock Homology cobordisms of {$3$}-manifolds, knot concordances, and prime
  knots.
\newblock {\em Pacific J. Math.}, 94(1):193--206, 1981.

\bibitem{marden}
A.~Marden.
\newblock {\em Hyperbolic manifolds}.
\newblock Cambridge University Press, Cambridge, 2016.
\newblock An introduction in 2 and 3 dimensions.

\bibitem{matveev_book}
S.~V. Matveev.
\newblock {\em Algorithmic topology and classification of 3-manifolds},
  volume~9 of {\em Algorithms and Computation in Mathematics}.
\newblock Springer, Berlin, second edition, 2007.

\bibitem{matveev_roots}
S.~V. Matveev.
\newblock Roots and decompositions of three-dimensional topological objects.
\newblock {\em Uspekhi Mat. Nauk}, 67(3(405)):63--114, 2012.

\bibitem{morgan}
J.~W. Morgan.
\newblock On {T}hurston's uniformization theorem for three-dimensional
  manifolds.
\newblock In {\em The {S}mith conjecture ({N}ew {Y}ork, 1979)}, volume 112 of
  {\em Pure Appl. Math.}, pages 37--125. Academic Press, Orlando, FL, 1984.

\bibitem{myers_0}
R.~Myers.
\newblock Simple knots in compact, orientable {$3$}-manifolds.
\newblock {\em Trans. Amer. Math. Soc.}, 273(1):75--91, 1982.

\bibitem{myers}
R.~Myers.
\newblock Homology cobordisms, link concordances, and hyperbolic
  {$3$}-manifolds.
\newblock {\em Trans. Amer. Math. Soc.}, 278(1):271--288, 1983.

\bibitem{myers_new}
R.~{Myers}.
\newblock {Concordances of Seifert Surfaces}, 2017.
\newblock \href{https://arxiv.org/pdf/1701.00516.pdf}{ArXiv/1701.00516}.

\bibitem{nakanishi_thesis}
Y.~Nakanishi.
\newblock {\em Links and Tangles}.
\newblock PhD thesis, Kobe University, 1983.

\bibitem{saw}
J.~{Sawollek}.
\newblock {On Alexander-Conway Polynomials for Virtual Knots and Links}, 1999.
\newblock \href{https://arxiv.org/pdf/math/9912173.pdf}{ArXiv/math/9912173}.

\bibitem{silwill}
D.~S. Silver and S.~G. Williams.
\newblock Crowell's derived group and twisted polynomials.
\newblock {\em J. Knot Theory Ramifications}, 15(8):1079--1094, 2006.

\bibitem{sil_will_sat}
D.~S. Silver and S.~G. Williams.
\newblock Virtual genus of satellite links.
\newblock {\em J. Knot Theory Ramifications}, 22(3):1350008, 4, 2013.

\bibitem{soma}
T.~Soma.
\newblock Simple links and tangles.
\newblock {\em Tokyo J. Math.}, 6(1):65--73, 1983.

\bibitem{stallings_central}
J.~Stallings.
\newblock Homology and central series of groups.
\newblock {\em J. Algebra}, 2:170--181, 1965.

\bibitem{thurston}
W.~P. Thurston.
\newblock Three-dimensional manifolds, {K}leinian groups and hyperbolic
  geometry.
\newblock {\em Bull. Amer. Math. Soc. (N.S.)}, 6(3):357--381, 1982.

\bibitem{turaev_cobordism}
V.~Turaev.
\newblock Cobordism of knots on surfaces.
\newblock {\em J. Topol.}, 1(2):285--305, 2008.

\end{thebibliography}

\end{document}